\documentclass[]{amsart}   
\usepackage{amsmath}
\usepackage[mathscr]{eucal} 
\usepackage{amssymb}
\usepackage{latexsym}
\usepackage{amsthm} 
\theoremstyle{plain}
\newtheorem{theorem}{Theorem}[section]
\newtheorem{lemma}[theorem]{Lemma}  

\newtheorem{theorema}{Theorem A\!\!}
\newtheorem{theoremb}{Theorem B\!\!}
\newtheorem{theoremc}{Theorem C\!\!}
\newtheorem{theoremd}{Theorem D\!\!}
\newtheorem{theoreme}{Theorem E\!\!}
\newtheorem{theoremf}{Theorem F\!\!}

\newtheorem{proposition}[theorem]{Proposition}

\newtheorem{corollary}[theorem]{Corollary}

\newcommand{\supp}{\mathop{\mathrm{supp}}\nolimits} 
\newcommand{\sgn}{\mathop{\mathrm{sgn}}\nolimits}

\theoremstyle{definition} 

\newtheorem{definition}[theorem]{Definition}
\numberwithin{equation}{section} 
\newtheorem{remark}[theorem]{Remark}

\theoremstyle{remark}

\def\Xint#1{\mathchoice
{\XXint\displaystyle\textstyle{#1}}%
{\XXint\textstyle\scriptstyle{#1}}%
{\XXint\scriptstyle\scriptscriptstyle{#1}}%
{\XXint\scriptscriptstyle\scriptscriptstyle{#1}}%
\!\int}
\def\XXint#1#2#3{{\setbox0=\hbox{$#1{#2#3}{\int}$}
\vcenter{\hbox{$#2#3$}}\kern-.5\wd0}}

\def\dashint{\Xint-}

\title[Littlewood-Paley equivalence  ]
{Littlewood-Paley equivalence and homogeneous Fourier multipliers}  
\author{Shuichi Sato} 
 
\begin{document} 

\address{Department of Mathematics,
Faculty of Education, Kanazawa University, Kanazawa 920-1192, Japan}
\email{shuichi@kenroku.kanazawa-u.ac.jp}
\begin{abstract} 
We consider certain Littlewood-Paley operators and prove 
characterization of some function spaces in terms of those operators.  
When treating weighted 
Lebesgue spaces, a generalization to weighted spaces  will be made for 
 H\"{o}rmander's theorem  on the invertibility of homogeneous Fourier 
 multipliers.  
Also, applications to the theory of Sobolev spaces will be given.   
\end{abstract}
  \thanks{2010 {\it Mathematics Subject Classification.\/}
  Primary  42B25; Secondary 46E35. 
  \endgraf
  {\it Key Words and Phrases.} Littlewood-Paley functions,
  Marcinkiewicz integrals,  Fourier multipliers, Sobolev spaces.  }
\thanks{The author is partly supported
by Grant-in-Aid for Scientific Research (C) No. 25400130, Japan Society for the  Promotion of Science.} 
\maketitle 
\section{Introduction}  
Let $\psi$ be a function in  $L^1(\Bbb R^n)$  such that  
\begin{equation}\label{cancell}
\int_{\Bbb R^n} \psi (x)\,dx = 0.  
\end{equation} 
We consider 
the Littlewood-Paley function on $\Bbb R^n$ defined by 
\begin{equation}\label{lpop}
g_{\psi}(f)(x) = \left( \int_0^{\infty}|f*\psi_t(x)|^2
\,\frac{dt}{t} \right)^{1/2},   
\end{equation} 
where $\psi_t(x)=t^{-n}\psi(t^{-1}x)$.  
The following result of Benedek, 
Calder\'on and Panzone \cite{BCP} on the $L^p$ boundedness, $1<p<\infty$, of 
$g_\psi$ is well-known. 
\begin{theorema}
We assume \eqref{cancell} for $\psi$ and   
\begin{gather}\label{1.2} 
|\psi(x)|\leq C(1+|x|)^{-n-\epsilon}, 
 \\  \label{1.3}
\int_{\Bbb R^n}|\psi(x-y) - \psi(x)|\,dx \leq C|y|^\epsilon  
\end{gather} 
for some positive constant $\epsilon$.   
Then  $g_\psi$  is bounded on $L^p(\Bbb R^n)$ for all 
$p\in (1, \infty):$  
\begin{equation}\label{bcp}
  \|g_\psi(f)\|_p\leq C_p\|f\|_p,    
\end{equation}
where 
$$\|f\|_p=\|f\|_{L^p}=\left(\int_{\Bbb R^n}|f(x)|^p\, dx\right)^{1/p}. $$
\end{theorema}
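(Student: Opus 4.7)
The plan is to realize $g_\psi$ as the pointwise norm of a Hilbert-space-valued convolution operator and apply the Benedek--Calder\'on--Panzone vector-valued Calder\'on--Zygmund theory. Set $H=L^2((0,\infty),dt/t)$ and define the $H$-valued kernel $K\colon\Bbb R^n\setminus\{0\}\to H$ by $K(x)(t)=\psi_t(x)$, so that $Tf(x)(t)=(f*\psi_t)(x)$ is an $H$-valued convolution operator satisfying $\|Tf(x)\|_H=g_\psi(f)(x)$. The strategy breaks into three steps: (a) prove $T\colon L^2(\Bbb R^n)\to L^2(\Bbb R^n;H)$ is bounded; (b) verify the $H$-valued H\"ormander smoothness bound $\int_{|x|>2|y|}\|K(x-y)-K(x)\|_H\,dx\le C$; (c) invoke vector-valued Calder\'on--Zygmund theory to pass to a weak type $(1,1)$ inequality for $T$, and then combine with (a) via Marcinkiewicz interpolation for $1<p\le 2$ and duality for $2\le p<\infty$ to obtain \eqref{bcp}.

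Step (a) is immediate from Plancherel's theorem:
\[
\|g_\psi(f)\|_2^2=\int_{\Bbb R^n}|\hat f(\xi)|^2\int_0^\infty|\hat\psi(t\xi)|^2\,\frac{dt}{t}\,d\xi,
\]
and after the substitution $s=t|\xi|$ the inner integral reduces to $\sup_{|\omega|=1}\int_0^\infty|\hat\psi(s\omega)|^2\,ds/s$. It suffices to show $|\hat\psi(\eta)|\le C\min(|\eta|^\epsilon,|\eta|^{-\epsilon})$: the bound near the origin follows from the cancellation \eqref{cancell} combined with the size condition \eqref{1.2}, by writing $\hat\psi(\eta)=\int\psi(x)(e^{-ix\cdot\eta}-1)\,dx$ and splitting the integral at $|x|\sim|\eta|^{-1}$; the bound at infinity follows from \eqref{1.3} via Fourier duality, which yields $|\hat\psi(\xi)||e^{-iy\cdot\xi}-1|\le C|y|^\epsilon$, and the choice $|y|\sim|\xi|^{-1}$ gives $|\hat\psi(\xi)|\le C|\xi|^{-\epsilon}$.

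The main obstacle is step (b). I split the $t$-integral defining $\|K(x-y)-K(x)\|_H^2$ at $t=|y|$, calling the resulting pieces $A(x)$ (for $0<t\le|y|$) and $B(x)$ (for $t\ge|y|$). For $A$, on $\{|x|>2|y|\}$ one has $|x-y|\gtrsim|x|$, and size alone, i.e.\ applying \eqref{1.2} separately to $\psi_t(x-y)$ and $\psi_t(x)$, yields after a direct computation
\[
A(x)^{1/2}\le C|y|^\epsilon|x|^{-n-\epsilon},\qquad |x|>2|y|,
\]
whose integral over $\{|x|>2|y|\}$ is $O(1)$ uniformly in $y$. For $B$ one must exploit regularity: rescaling \eqref{1.3} gives $\int|\psi_t(x-y)-\psi_t(x)|\,dx\le C(|y|/t)^\epsilon$, which combined with the pointwise bound $|\psi_t(x-y)-\psi_t(x)|\le Ct^{-n}(1+|x|/t)^{-n-\epsilon}$ (valid on $|x|>2|y|$) yields, for any $\delta\in(0,\epsilon)$,
\[
\int_{|x|>2|y|}|\psi_t(x-y)-\psi_t(x)|^2|x|^{n+\delta}\,dx\le C|y|^\epsilon t^{\delta-\epsilon}.
\]
Integration in $t$ over $(|y|,\infty)$ gives $\int_{|x|>2|y|}B(x)|x|^{n+\delta}\,dx\le C|y|^\delta$, and a weighted Cauchy--Schwarz in $x$ with weight $|x|^{-n-\delta}$ then closes the estimate $\int_{|x|>2|y|}B(x)^{1/2}\,dx\le C$. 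With (a) and (b) in hand, step (c) is a direct appeal to vector-valued Calder\'on--Zygmund theory and produces \eqref{bcp}.
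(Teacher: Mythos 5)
The paper does not prove Theorem A; it is quoted as a known result with a citation to [BCP], and the standard proof is exactly the vector-valued Calder\'on--Zygmund argument you give. Your write-up is correct in all its essentials: the Plancherel step via the decay $|\hat\psi(\eta)|\leq C\min(|\eta|^{\epsilon'},|\eta|^{-\epsilon'})$ deduced from \eqref{cancell}, \eqref{1.2}, \eqref{1.3}, the splitting of the Hilbert-space H\"ormander condition at $t=|y|$ with size controlling the small-$t$ piece and the interpolation between the rescaled $L^1$-regularity and the pointwise size bound (via the weighted Cauchy--Schwarz with weight $|x|^{\pm(n+\delta)}$, $0<\delta<\epsilon$) controlling the large-$t$ piece, and the appeal to the Benedek--Calder\'on--Panzone theorem for Hilbert-space-valued kernels all check out.
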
  
\par 
By the Plancherel theorem, it follows that $g_\psi$ is bounded on 
$L^2(\Bbb R^n)$ if and only if $m\in L^\infty(\Bbb R^n)$, where 
$m(\xi)=\int_0^\infty |\hat{\psi}(t\xi)|^2\, dt/t$, 
which is a homogeneous function of degree $0$.   Here  
the Fourier transform is defined as 
\begin{equation*}
\hat{\psi}(\xi)=\int_{\Bbb R^n} \psi(x) 
e^{-2\pi i \langle x,\xi\rangle}\, dx, \quad  
\langle x,\xi\rangle=\sum_{k=1}^n x_k\xi_k.    
\end{equation*} 
\par  
Let  
$$P_t(x)=c_n \frac{t}{(|x|^2+t^2)^{(n+1)/2}}$$ 
be the Poisson kernel on the upper half space $\Bbb R^n \times (0, \infty)$ 
and $Q(x)= [(\partial/\partial t) P_t(x)]_{t=1}$.   
Then, we can see that the function $Q$ satisfies the conditions 
\eqref{cancell}, \eqref{1.2} and \eqref{1.3}.  Thus 
 by Theorem A $g_{Q}$ is bounded on $L^p(\Bbb R^n)$ 
for all $p\in (1, \infty)$. 
\par 
Let 
$H(x)= \sgn(x)\chi_{[-1,1]}(x)=\chi_{[0,1]}(x)-\chi_{[-1,0]}(x)$  on $\Bbb R$ 
(the Haar function), 
where $\chi_E$ denotes the characteristic function of a set $E$ and 
$\sgn(x)$ the signum function.    
Then $g_{H}(f)$ is the Marcinkiewicz integral 
$$ \mu(f)(x)= \left( \int_0^{\infty}|F(x+t)+F(x-t)-2F(x)|^2\,\frac{dt}{t^3} \right)^{1/2},$$  
where $F(x)=\int_0^xf(y)\,dy$.   
Also, we can easily see that Theorem A implies that 
$g_H$ is bounded on $L^p(\Bbb R)$, $1<p<\infty$. 
\par 
Further, we can consider 
  the generalized Marcinkiewicz integral $\mu_{\alpha}(f)$ ($\alpha>0$)  on 
  $\Bbb R$ defined by 
 $$\mu_{\alpha}(f)(x) = \left( \int_0^{\infty}|S_t^{\alpha}(f)(x)|^2
 \,\frac{dt}{t}\right)^{1/2},$$ 
 where 
\begin{equation*}\label{gemar}
S_t^{\alpha}(f)(x) = \frac{\alpha}{t} \int_0^t\left(1 - \frac{u}{t}\right)^{\alpha - 1}
\left(f(x - u) - f(x + u)\right)\,du.  
\end{equation*}  
We observe that $\mu_\alpha(f)= g_{\varphi^{(\alpha)}}(f)$  
with 
\begin{equation}\label{1.4}
 \varphi^{(\alpha)}(x)= 
 \alpha|1-|x||^{\alpha -1}\sgn(x)\chi_{(-1, 1)}(x).    
\end{equation}  
  The square function $\mu_1$ coincides with the ordinary Marcinkiewicz integral $\mu$.  
When $\psi$ is compactly supported, relevant sharp results for the $L^p$ boundedness 
of $g_\psi$ can be found in \cite{D, FS, Sa3}.  
\par 
We can also consider Littlewood-Paley operators on the Hardy space 
 $H^p(\Bbb R^n)$, $0<p<\infty$. We consider 
a dense subspace $\mathscr S_0(\Bbb R^n)$ of $H^p(\Bbb R^n)$ consisting 
of those functions $f$ in $\mathscr S(\Bbb R^n)$ which satisfy $\hat{f}=0$  
near the origin, where $\mathscr S(\Bbb R^n)$ denotes the Schwartz class of 
rapidly decreasing smooth functions. 
  Let $f \in  \mathscr S_0(\Bbb R)$. 
Then, if $2/(2\alpha + 1) < p < \infty$ and $\alpha>0$,  we have 
$\|\mu_{\alpha}(f)\|_p \simeq \|f\|_{H^p}$, which means 
 \begin{equation}\label{1.5} 
 c_p\|f\|_{H^p}\leq \|\mu_{\alpha}(f)\|_p \leq C_p\|f\|_{H^p}  
\end{equation} 
with some positive constants $c_p, C_p$ independent of $f$ 
(see \cite{Su}, \cite{Sa2}).  
\par 
To state results about the reverse inequality of \eqref{bcp}, we first recall 
a theorem of H\"{o}rmander \cite{H}.  
Let $m\in L^\infty(\Bbb R^n)$ and define 
\begin{equation}\label{fmo}
T_m(f)(x)=\int_{\Bbb R^n} m(\xi)\hat{f}(\xi)
e^{2\pi i \langle x,\xi\rangle}\, d\xi.    
\end{equation} 
We say that $m$ is a Fourier multiplier for $L^p$ and write $m\in M^p$ 
if there exists a constant $C>0$ such that 
\begin{equation*}\label{dfm}
\|T_m(f)\|_{p}\leq C\|f\|_{p}
\end{equation*} 
for all $f\in L^2\cap L^p$.  
Then the result of H\"{o}rmander \cite{H} can be stated as follows.
\begin{theoremb} 
Let $m$ be a bounded function on $\Bbb R^n$ which is homogeneous of 
degree $0$. 
Suppose that  $m\in M^p$ for all $p\in (1,\infty)$. Suppose further 
that $m$ is continuous and does not vanish on 
$S^{n-1}=\{x\in \Bbb R^n: |x|=1\}$.   
Then, $m^{-1}\in M^p$ for every $p\in (1,\infty)$. 
\end{theoremb}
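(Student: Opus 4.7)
My plan is to interpret the hypothesis as non-vanishing in a Banach algebra of Fourier multipliers and invoke Wiener's inversion lemma. Fix $p\in(1,\infty)$ and set
\begin{equation*}
\mathcal A_p=\{\phi\in C(S^{n-1}) : \text{the degree-}0\text{ homogeneous extension of }\phi\text{ lies in }M^p\},
\end{equation*}
normed by $\|\phi\|_{\mathcal A_p}=\|T_\phi\|_{L^p\to L^p}$. Since $M^p$ is closed under pointwise multiplication, $\mathcal A_p$ is a commutative unital Banach algebra. Duality ($M^p=M^{p'}$) combined with complex interpolation yields $\|\phi\|_\infty=\|T_\phi\|_{L^2\to L^2}\leq\|T_\phi\|_{L^p\to L^p}$, so $\mathcal A_p$ embeds continuously in $L^\infty(S^{n-1})$. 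The Mihlin multiplier theorem places $C^\infty(S^{n-1})\subset\mathcal A_p$.

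The central step is to identify the Gelfand spectrum $\Delta(\mathcal A_p)$ with $S^{n-1}$. Point evaluations give $S^{n-1}\hookrightarrow\Delta(\mathcal A_p)$. For the reverse inclusion, given a character $\chi$ of $\mathcal A_p$, I would use the smooth cutoffs afforded by $C^\infty(S^{n-1})\subset\mathcal A_p$ to localize $\chi$: for every smooth partition of unity $\{\rho_j\}$ subordinate to a finite cover of $S^{n-1}$ by spherical caps of diameter $\leq\epsilon$, multiplicativity forces $\chi(\rho_j)\neq 0$ for at least one $j$, and a compactness/limiting argument as $\epsilon\to 0$ singles out a point $\xi_\chi\in S^{n-1}$ such that every $\phi\in\mathcal A_p$ vanishing in a neighborhood of $\xi_\chi$ lies in $\ker\chi$. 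A standard regularity argument then forces $\chi=\mathrm{ev}_{\xi_\chi}$.

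Once $\Delta(\mathcal A_p)=S^{n-1}$ is established, the theorem follows immediately: since $m$ is nonvanishing on $S^{n-1}=\Delta(\mathcal A_p)$, $m$ is invertible in $\mathcal A_p$ by the standard Banach algebra criterion, and uniqueness of inverses forces the algebra inverse to coincide with the pointwise inverse $1/m$. Hence $1/m\in M^p$, and since $p\in(1,\infty)$ was arbitrary, the proof is complete.

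The main obstacle is the spectrum identification $\Delta(\mathcal A_p)\cong S^{n-1}$. The difficulty is that $C^\infty(S^{n-1})$ need not be dense in $\mathcal A_p$ in the $M^p$-norm, so one cannot transfer information from smooth functions to general elements of $\mathcal A_p$ by continuity; instead one must perform the localization carefully, exploiting the regularity of $\mathcal A_p$ (availability of smooth cutoffs with controlled support) to reduce the value of $\chi$ on a general $\phi\in\mathcal A_p$ to contributions coming from smooth functions concentrated near $\xi_\chi$.
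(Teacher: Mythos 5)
Your reduction to a Wiener-type inversion in the algebra $\mathcal A_p$ is a genuinely different strategy from the one the paper (following H\"ormander) uses, but it has a gap exactly where you flag the ``main obstacle'': the identification $\Delta(\mathcal A_p)\cong S^{n-1}$ is not a routine regularity argument --- it is essentially equivalent to the theorem itself, and the final step of your sketch does not close. Your localization does show that a character $\chi$ annihilates every $\phi$ vanishing in a \emph{neighborhood} of $\xi_\chi$, and hence, taking smooth cutoffs $\rho_\delta$ equal to $1$ near $\xi_\chi$ with support of diameter $\delta$, that $\chi(\phi)=\chi(\phi\rho_\delta)$ for every $\phi$ with $\phi(\xi_\chi)=0$. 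But to conclude $\chi(\phi)=0$ you need $|\chi(\phi\rho_\delta)|\to 0$, i.e.\ control of the spectral radius of the localized pieces $\phi\rho_\delta$ in $\mathcal A_p$. Their sup-norms tend to $0$ by continuity of $\phi$, but their $M^p$-norms need not (the Mihlin constants of $\rho_\delta$ blow up as $\delta\to 0$, and $\phi$ is merely continuous), and for a general commutative Banach algebra of continuous functions there is no implication from ``small sup-norm'' to ``small spectral radius.'' This is not a hypothetical worry: for the full multiplier algebra $M^p(\Bbb R^n)$, $p\neq 2$, and for the measure algebra (Wiener--Pitt phenomenon), the maximal ideal space is strictly larger than the expected point set, so characters that kill locally supported functions need not be point evaluations. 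Some quantitative input special to this situation must be used, and your proposal never invokes the hypothesis that $m\in M^p$ for \emph{all} $p\in(1,\infty)$, which is precisely where that input comes from.

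The paper's route (Proposition 2.2, Proposition 2.4, Theorem 2.5) supplies exactly the missing mechanism, and does so globally rather than by localization. The key inequality is $\|h^k\|_{M^p}\leq\|h^k\|_\infty^{1-\theta}\|h^k\|_{M^q}^\theta$ (Stein--Weiss interpolation with $L^2$, where the multiplier norm equals the sup-norm, toward an exponent $q$ on the far side of $p$ --- this is where membership in $M^q$ for exponents beyond $p$ is indispensable), which yields $\rho_p(h)\leq\|h\|_\infty^{1-\theta}\rho_q(h)^\theta$. One then approximates $m$ by a smooth homogeneous $n=m_j$ (averaging over rotations and dilations) whose powers satisfy $\|m_j^k\|_{M^q}\leq C_jk^M\|m\|_\infty^k$ so that $\rho_q(m_j)\leq\|m\|_\infty$, deduces that $\rho_p(m-n)$ is as small as desired because $\|m-n\|_\infty$ is, and expands $1/m$ by Cauchy's formula on a contour around $n(\xi)$ as a series in $(m-n)^k$ times uniformly bounded smooth multipliers $M_k$. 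If you want to salvage the Gelfand formulation, you would have to import this same interpolation estimate to show that every character is contractive for the sup-norm on a dense-enough class --- at which point you are reproving the paper's Proposition 2.4, and the Banach-algebra packaging buys little. As written, the statement ``a standard regularity argument then forces $\chi=\mathrm{ev}_{\xi_\chi}$'' assumes the hard part.
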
 
See \cite{CZ, BCP} for related results. 
Applying Theorem B, we can deduce the following (see \cite[Theorem 3.8]{H}). 
\begin{theoremc} 
Suppose that $g_\psi$ is bounded on $L^p$ for every $p\in 
(1,\infty)$.  Let $m(\xi)=\int_0^\infty |\hat{\psi}(t\xi)|^2\, dt/t$. 
If $m$ is continuous and strictly positive on $S^{n-1}$, then we have 
$$ \|f\|_p\leq c_p \|g_\psi(f)\|_p,   $$  
and hence $\|f\|_p \simeq \|g_\psi(f)\|_p$, $f\in L^p$, 
for all $p\in (1, \infty)$.   
\end{theoremc}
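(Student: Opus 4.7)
\bigskip

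\noindent\textbf{Proof plan for Theorem C.}
The strategy is to show that under the hypotheses the symbol $m$ is a Fourier multiplier for every $L^p$ with $p\in(1,\infty)$, invoke Theorem B to invert it, and then bound $\|f\|_p$ by $\|T_m f\|_p$ by $\|g_\psi(f)\|_p$.

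First I would note the basic properties of $m$: the change of variable $s=t|\xi|$ shows $m$ is homogeneous of degree $0$, so by hypothesis it is continuous and bounded above and below on $S^{n-1}$ (hence on $\mathbb R^n\setminus\{0\}$) by compactness. Thus $m\in L^\infty$ and the conclusions of Theorem B will be available once we verify $m\in M^p$ for all $p$.

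The key step is the identity
\begin{equation*}
\langle T_m f, h\rangle \;=\; \int_0^\infty\!\!\int_{\mathbb R^n}(f*\psi_t)(x)\,\overline{(h*\psi_t)(x)}\,dx\,\frac{dt}{t},
\end{equation*}
valid for $f,h$ in a suitable dense subclass (for instance Schwartz functions whose Fourier transforms are compactly supported away from the origin, so that the double integral converges absolutely and Fubini applies). The identity follows from Plancherel applied in $x$ for each $t$, the definition of $m$, and the definition \eqref{fmo} of $T_m$. Cauchy--Schwarz in $t$ and then in $x$ yields
\begin{equation*}
|\langle T_m f,h\rangle| \;\leq\; \int_{\mathbb R^n} g_\psi(f)(x)\,g_\psi(h)(x)\,dx \;\leq\; \|g_\psi(f)\|_p\,\|g_\psi(h)\|_{p'}.
\end{equation*}
Applying the hypothesis that $g_\psi$ is bounded on $L^{p'}$ gives $\|g_\psi(h)\|_{p'}\leq C\|h\|_{p'}$, and duality then delivers
\begin{equation*}
\|T_m f\|_p \;\leq\; C\,\|g_\psi(f)\|_p, \qquad 1<p<\infty.
\end{equation*}
In particular $m\in M^p$ for every $p\in(1,\infty)$.

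With $m\in M^p$ for all $p\in(1,\infty)$ and $m$ homogeneous of degree $0$, continuous and nonvanishing on $S^{n-1}$, Theorem B gives $m^{-1}\in M^p$ for all $p\in(1,\infty)$. Since on the dense subclass we have $f = T_{m^{-1}}T_m f$, we conclude
\begin{equation*}
\|f\|_p \;\leq\; \|T_{m^{-1}}\|_{p\to p}\,\|T_m f\|_p \;\leq\; c_p\,\|g_\psi(f)\|_p,
\end{equation*}
and a standard density argument extends this to all $f\in L^p$.

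The main obstacle is technical rather than conceptual: one must choose the class of test functions for $f$ and $h$ carefully so that the double integral defining $\langle T_m f,h\rangle$ converges absolutely and Fubini is legitimate (the factor $|\hat\psi(t\xi)|^2$ has no automatic integrability near $t=0$ or $t=\infty$ without some cancellation or decay on $\hat f$, $\hat h$), and so that the final inequality can be upgraded to all of $L^p$ by density. Once this is arranged, the rest of the argument is a clean combination of Plancherel, Cauchy--Schwarz, and Theorem B.
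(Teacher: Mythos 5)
Your proposal is correct and follows essentially the same route as the paper: the pairing identity plus Cauchy--Schwarz and duality is exactly the content of Lemma 2.7 and Proposition 2.8 (there phrased via the operator $E^{\epsilon}_{\widetilde{\bar{\psi}}}$ applied to $F(y,t)=f*\psi_t(y)$, with a truncation $m^{(\epsilon)}$ handling the convergence issue you flag), after which Theorem B inverts $m$ and $f=T_{m^{-1}}T_mf$ finishes the argument, just as in your last step.
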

\par 
In this note we shall generalize Theorems B and C to weighted 
$L^p$ spaces with $A_p$ weights of Muckenhoupt 
(see Theorems 2.5, 2.9 and Corollaries 2.6,  2.11). Our proof 
of Theorem 2.5 has some features in common with the proof of Wiener-L\'{e}vy 
theorem in \cite[vol. I, Chap. VI]{Z}.   
We also consider a discrete parameter version of $g_\psi$: 
\begin{equation}\label{dlpop}
\Delta_\psi(f)(x)
=\left(\sum_{k=-\infty}^{\infty}\left|f*\psi_{2^k}(x)\right|^2  \right)^{1/2}. 
\end{equation} 
We shall have $\Delta_\psi$ analogues of results for $g_\psi$ 
(see Theorem 3.5 and Corollary 3.7).  
We formulate Theorems 2.9 and 3.5 in general forms so that they include 
unweighted cases as special cases, while Corollaries 2.11 and 3.7 may be 
more convenient for some applications. 
\par 
In the unweighted case, we shall prove some results on $H^p$ analogous to 
Corollaries 2.11 and 3.7 for $p$ close to $1$, $p\leq 1$, in Section 4 under 
a certain regularity condition for $\psi$ (Theorems 4.7 and 4.8).   
We shall consider functions $\psi$ including those which cannot be treated 
directly by the theory of \cite{U}.  As a result, 
in particular, we shall be able to give a proof of the second inequality of 
\eqref{1.5} for $1/2<\alpha<3/2$ and $2/(2\alpha + 1) < p \leq 1$ 
by methods of real analysis which 
does not depend on the Poisson kernel. 
\par 
Here we recall some more background materials on $\mu_\alpha$.  
When $p<1$ and $1/2<\alpha<1$, we know proofs for the first 
and the second inequality of 
\eqref{1.5} which use  pointwise relations 
$\mu_\alpha(f)\geq c g_0(f)$ and
 $\mu_\alpha(f)\sim g^*_{\lambda}(f)$ with $\lambda=1+2\alpha$, respectively, 
 and apply appropriate properties of $g_0$ and $g^*_{\lambda}$.   
Also, we note that a proof of the inequality 
 $\|\mu(f)\|_1 \leq C\|f\|_{H^1}$ using a theory of vector valued singular 
integrals can be found in  \cite[Chap. V]{GR} (see also \cite{RRT}).        
We have assumed that $\supp(\hat{f})\subset [0,\infty)$ in stating 
$\mu_\alpha(f)\sim g^*_{\lambda}(f)$ and 
$g_0(f)$,  $g^*_{\lambda}(f)$  are the Littlewood-Paley functions 
defined by 
\begin{equation*}
g_0(f)(x)= \left( \int_0^{\infty}
|(\partial/\partial x) u(x,t)|^2t \, dt \right)^{1/2},  
\end{equation*}
\begin{equation*}
g^*_{\lambda}(f)(x) = \left(\iint_{\Bbb R \times (0,\infty) }
\left(\frac{t}{t+|x-y|}
\right)^{\lambda}|\nabla u(y,t)|^2 \,dy\,dt\right)^{1/2}  
\end{equation*}
with $u(y,t)$ denoting the Poisson integral of $f$: $u(y,t) = P_t *f(y)$ 
(see \cite{Su}, \cite{Sa2} and references therein,  
and also \cite{KS},  \cite{MW} for related results). 
\par 
 In \cite{U}, a  proof of $\|f\|_{H^p}\leq C\|g_Q(f)\|_p$ 
on $\Bbb R^n$ is given without the use of harmonicity
(see \cite{FeS}  for the original proof using properties of harmonic 
functions). Also, when $n=1$, a similar result is shown for $g_0$. 
 It is to be noted that, combining this with the pointwise relation 
 between  $g_0$ and $\mu_\alpha$ mentioned above, 
we can  give a proof of  the first inequality of  \eqref{1.5} for the whole 
range of $p$, $\alpha$  in such a manner that  a special property of the 
Poisson  kernel is  used only to prove the pointwise relation.   
\par 
In Section 5, we shall apply Corollaries 2.11 and 3.7 to the theory of  
Sobolev spaces.   
In \cite{AMV}, the operator  
\begin{equation}\label{cb1}  
U_\alpha(f)(x)=\left(\int_0^\infty\left|f(x)-\dashint_{B(x,t)}f(y)\, dy
\right|^2\frac{dt}{t^{1+2\alpha}} \right)^{1/2}, \quad \alpha>0, 
\end{equation}  
was studied, where $\dashint_{B(x,t)} f(y)\,dy$ is defined as     
$|B(x,t)|^{-1}\int_{B(x,t)} f(y)\,dy$ with $|B(x,t)|$ denoting 
the Lebesgue measure of a ball $B(x,t)$ in $\Bbb R^n$ of radius $t$ 
centered at $x$.  
The operator $U_1$ was used to characterize the Sobolev space 
$W^{1,p}(\Bbb R^n)$.    
\begin{theoremd} Let $1<p<\infty$. Then, the following two statements are 
equivalent$:$     
\begin{enumerate} \renewcommand{\labelenumi}{(\arabic{enumi})}  
\item $f$ belongs to $W^{1,p}(\Bbb R^n)$, 
\item  $f\in L^p(\Bbb R^n)$ and $U_1(f)\in L^p(\Bbb R^n)$.  
\end{enumerate}  
Furthermore, from either of the two conditions $(1), (2)$ it follows that 
$$\|U_1(f)\|_p\simeq \|\nabla f\|_p.   $$
\end{theoremd}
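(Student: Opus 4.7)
\emph{Proof proposal.}
The plan is to realize $U_1$ as a continuous Littlewood--Paley operator $g_\phi$ applied to $\Lambda f := (-\Delta)^{1/2}f$, and then combine the Littlewood--Paley equivalence $\|g_\phi(h)\|_p \simeq \|h\|_p$ of Corollary~2.11 with the classical identity $\|\Lambda f\|_p \simeq \|\nabla f\|_p$, which holds for $1 < p < \infty$ by $L^p$-boundedness of the Riesz transforms.

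Let $\eta(z) = \chi_{B(0,1)}(z)/|B(0,1)|$, so that $\dashint_{B(x,t)}f(y)\,dy = (f*\eta_t)(x)$ and $(f - f*\eta_t)^\wedge(\xi) = (1 - \hat\eta(t\xi))\hat f(\xi)$. Since $\eta$ is even and radial, $\hat\eta$ is smooth with $\hat\eta(\xi) = 1 - c|\xi|^2 + O(|\xi|^4)$ near $\xi = 0$, so $\hat\phi(\xi) := (1-\hat\eta(\xi))/|\xi|$ is a bounded radial function vanishing at $\xi = 0$. With $\phi$ the inverse Fourier transform of $\hat\phi$, the factorization $(1-\hat\eta(t\xi))\hat f(\xi) = t\,\hat\phi(t\xi)\cdot|\xi|\hat f(\xi)$ yields $f - f*\eta_t = t\,\phi_t*\Lambda f$; substituting into \eqref{cb1} gives
\[ U_1(f)(x)^2 = g_\phi(\Lambda f)(x)^2, \]
so the theorem reduces to showing that $\phi$ falls under Corollary~2.11.

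Cancellation $\int \phi = 0$ is immediate from $\hat\phi(0) = 0$. Radiality of $\eta$ makes $m_\phi(\xi) := \int_0^\infty |\hat\phi(t\xi)|^2\,dt/t$ a positive constant on $S^{n-1}$, so continuity and strict positivity there are automatic. The size and regularity of $\phi$ are extracted from the explicit formula $\phi(x) = c_n\bigl(|x|^{1-n} - (\eta * |\cdot|^{1-n})(x)\bigr) = c_n\int\bigl(|x|^{1-n} - |x-y|^{1-n}\bigr)\eta(y)\,dy$ (with a logarithmic modification when $n = 1$). Using $\int y\,\eta(y)\,dy = 0$ and a second-order Taylor expansion of $|x-y|^{1-n}$ about $y = 0$ produces the long-range decay $\phi(x) = O(|x|^{-n-1})$; a parallel computation gives the integrated H\"older bound analogous to (1.3). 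Granting these, Corollary~2.11 yields $\|U_1(f)\|_p = \|g_\phi(\Lambda f)\|_p \simeq \|\Lambda f\|_p \simeq \|\nabla f\|_p$, and the equivalence of (1) and (2) is immediate: (1)$\Rightarrow$(2) is direct, while if $f, U_1(f) \in L^p$ then $\Lambda f \in L^p$, hence $\nabla f \in L^p$ via the Riesz transforms, placing $f \in W^{1,p}$.

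The main obstacle is the kernel analysis of $\phi$. Because $\hat\phi$ is only Lipschitz at the origin, $\phi$ carries a Riesz-potential-type singularity $|x|^{1-n}$ near $x = 0$; one must confirm that this locally integrable singularity is compatible with the hypotheses of Corollary~2.11, and extract the long-range decay $O(|x|^{-n-1})$ (rather than the obvious $O(|x|^{-n})$) from the cancellation of the first moment of $\eta$. As a consistency check, the Plancherel computation gives $\|U_1(f)\|_2^2 = c\|\nabla f\|_2^2$ with $c = \int_0^\infty|1 - \hat\eta(s\omega)|^2 s^{-3}\,ds$ independent of $\omega \in S^{n-1}$, which both validates the identification and fixes the implicit constants.
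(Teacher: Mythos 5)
Your reduction is the one the paper itself carries out in Section~5: with $\phi=L_1-\chi_0*L_1$ (the kernel $\psi$ of the proof of Theorem~5.3 specialized to $\alpha=1$, $\Phi=\chi_0$) one has $U_1(f)=g_\phi(I_{-1}f)$ by homogeneity of the Riesz kernel, the local bound $|\phi(x)|\leq C|x|^{1-n}$, the decay $|\phi(x)|\leq C|x|^{-n-1}$ coming from the vanishing first moment of $\chi_0$ (the paper's condition \eqref{moment} with $[\alpha]=1$), and $\hat{\phi}(\xi)=(2\pi|\xi|)^{-1}(1-\hat{\Phi}(\xi))$. One caution: the hypotheses you must verify for Corollary~2.11 are conditions $(1)$--$(3)$ of Theorem~E, whose clauses $C_u(\phi)<\infty$ and $H_\phi\in L^1$ are exactly what absorb the $|x|^{1-n}$ singularity at the origin; an "integrated H\"older bound analogous to \eqref{1.3}" is neither needed nor sufficient here, and \eqref{1.2} actually fails for $n\geq 2$. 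Granting that, your endgame via $\|I_{-1}f\|_p\simeq\|\nabla f\|_p$ (Riesz transforms) is a slightly more classical substitute for the paper's Bessel-potential bookkeeping (Lemmas~5.5 and 5.6), and the case $n=1$ does require the separate kernel $\sgn-\sgn*\Phi$ of Remark~\ref{one}, since $\alpha=1$ is excluded from $0<\alpha<n$ there, as your parenthetical half-acknowledges.

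The genuine gap is in the implication $(2)\Rightarrow(1)$. You assert that $f,\,U_1(f)\in L^p$ forces $\Lambda f\in L^p$, but the only available tool for this is the lower bound $\|h\|_p\leq C\|g_\phi(h)\|_p$ of Corollary~2.11, which is proved for $h$ already known to lie in $L^p$ (it rests on writing $h=T_{m^{-1}}T_m h$ for $h\in L^p\cap L^2$). For a general $f\in L^p$, $\Lambda f$ is merely a tempered distribution, the identity $U_1(f)=g_\phi(\Lambda f)$ has not been established for it, and invoking the lower bound is circular. This is precisely the point on which the paper spends the second half of the proof of Corollary~5.4 (Corollary~5.2 being analogous): mollify, set $f^{(\epsilon)}=\varphi_\epsilon*f$ and $g^{(\epsilon)}=J_{-\alpha}(\varphi_\epsilon)*f\in L^p$, establish the uniform bound $\sup_{\epsilon}\|g^{(\epsilon)}\|_{p}\leq C(\|f\|_{p}+\|U_\alpha(f)\|_{p})$ using $U_\alpha(f^{(\epsilon)})\leq CM(U_\alpha(f))$, extract a weakly convergent subsequence, and identify the weak limit through the duality of Lemma~5.6. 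You need this limiting argument (or an equivalent one producing $\Lambda f$ as an $L^p$ function) to close the converse direction; the rest of the proposal is sound.
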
 
This may be used to define a Sobolev space analogous to $W^{1,p}(\Bbb R^n)$ 
in metric measure spaces. 
We shall also consider a discrete parameter version of $U_\alpha$:   
\begin{equation}\label{db1} 
E_\alpha(f)(x)=\left(\sum_{k=-\infty}^\infty\left|f(x)-\dashint_{B(x,2^k)}f(y)
\, dy\right|^2 2^{-2k\alpha} \right)^{1/2}, \quad \alpha>0,   
\end{equation} 
and prove an analogue of Theorem D for $E_\alpha$.   
Further, we shall consider operators generalizing $U_\alpha$, $E_\alpha$ 
and show that they can be used to characterize the weighted Sobolev spaces, 
focusing on the case $0<\alpha<n$.

\section{Invertibility of homogeneous Fourier multipliers and Littlewood-Paley 
operators} 
We say that a weight function $w$ belongs to the weight class $A_p$, 
 $1<p< \infty$, of Muckenhoupt on $\Bbb R^n$ if 
 $$[w]_{A_p}=  
 \sup_B \left(|B|^{-1} \int_B w(x)\,dx\right)\left(|B|^{-1} \int_B
w(x)^{-1/(p-1)}dx\right)^{p-1} < \infty, $$
where the supremum is taken over all balls $B$ in $\Bbb R^n$. 
Also,  we say that $w\in A_1$ if $M(w)\leq Cw$ almost everywhere, 
with $M$ denoting the Hardy-Littlewood maximal operator 
$$M(f)(x)=\sup_{x\in B}|B|^{-1}\int_B|f(y)|\,dy,   $$ 
where the supremum is taken over all balls $B$ in $\Bbb R^n$ containing  $x$;  
we denote by $[w]_{A_1}$ the infimum of all such $C$. 
\par 
Let $m\in L^\infty(\Bbb R^n)$ and $w\in A_p$, $1<p<\infty$.  
Let $T_m$ be as in (1.8). 
We say that $m$ is a Fourier multiplier for $L^p_w$ and write
 $m\in M^p(w)$ if there exists a constant $C>0$ such that 
\begin{equation}\label{1}
\|T_m(f)\|_{p, w}\leq C\|f\|_{p,w}
\end{equation} 
for all $f\in L^2\cap L^p_w$, where 
$$\|f\|_{p,w}=\|f\|_{L^p_w}= \left(\int_{\Bbb R^n}|f(x)|^pw(x)
\, dx\right)^{1/p}.   $$   
We also write $L^p(w)$ for $L^p_w$.  
Define 
$$\|m\|_{M^p(w)}=\inf C,  $$ 
where the infimum is taken over all the constants $C$ satisfying \eqref{1}.   
Since $L^2\cap L^p_w$ is dense in $L^p_w$,   $T_m$ 
uniquely extends to a bounded linear operator on $L^p_w$ if $m\in M^p(w)$.  
In this note we shall confine our attention to the case of $L^p_w$ boundedness 
of $T_m$ with $w\in A_p$.    
We note that $M^p(w)=M^{p'}(\widetilde{w}^{-p'/p})$ by duality, where 
$1/p+1/p'=1$ and $\widetilde{w}(x)=w(-x)$. 
\par 
If $w\in A_p$, $1<p<\infty$,  then $w^s\in A_r$ for some $s>1$ and $r<p$ 
(see \cite{GR}). In applying interpolation arguments it is useful  
if sets of those $(r,s)$ are specially notated.  
\begin{definition} Let $w\in A_p$, $1<p<\infty$.   
For $0<\sigma<p-1$, 
$\tau>0$, set 
$$U(w,p)=U(w,p,\sigma, \tau)=(p-\sigma,p+\sigma)\times [1, 1+\tau). $$ 
We say that $U(w,p)$ is a $(w,p)$  set  
if $w^s \in A_r$ for all $(r,s)\in U(w,p)$. 
We  write 
$m\in M(U(w,p))$ if  $m\in M^r(w^s)$ for all $(r,s)\in U(w,p)$.   
\end{definition}  
\par 
We need a relation of $\|m\|_{M^p(w)}$ and $\|m\|_\infty$ in the following. 
\begin{proposition} 
Let $w\in A_p$, $1<p<\infty$. Suppose that $m\in M(U(w,p))$  for 
a $(w,p)$ set $U(w,p)$.  
Then 
\begin{align*} 
\|m\|_{M^p(w)}&\leq \|m\|_\infty^{1-\theta}\|m\|^\theta_{M^{p+\delta}
(w^{1+\epsilon})}, \quad (p+\delta,1+\epsilon)\in U(w,p),    
\quad \text{if $p>2$}; 
\\  
\|m\|_{M^p(w)}&\leq \|m\|_\infty^{1-\theta}\|m\|^\theta_{M^{p}
(w^{1+\epsilon})}, \quad (p,1+\epsilon)\in U(w,p),    
\quad \text{if $p=2$}; 
\\ 
\|m\|_{M^p(w)}&\leq \|m\|_\infty^{1-\theta}\|m\|^\theta_{M^{p-\delta}
(w^{1+\epsilon})}, \quad (p-\delta,1+\epsilon)\in U(w,p), 
 \quad \text{if $1<p<2$}, 
\end{align*} 
for some $\theta\in(0,1)$ and some small numbers $\delta, \epsilon>0$.  
\end{proposition}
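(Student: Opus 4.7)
The plan is to deduce all three inequalities from the Stein–Weiss complex interpolation theorem with change of measure, using $L^2$ as the unweighted endpoint. The starting observation is that by Plancherel,
\begin{equation*}
\|T_m f\|_2 = \|m\hat{f}\|_2 \leq \|m\|_\infty \|f\|_2,
\end{equation*}
so $m\in M^2$ with $\|m\|_{M^2}\leq \|m\|_\infty$. On the other hand, the assumption $m\in M(U(w,p))$ gives $m\in M^{p_1}(w^{1+\epsilon})$ for every $(p_1,1+\epsilon)\in U(w,p)$. So the strategy is to interpolate between the trivial $L^2$ bound and a nearby weighted bound.

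Recall the Stein–Weiss interpolation theorem: if $T$ is bounded on $L^{p_0}(v_0\,dx)$ with norm $N_0$ and on $L^{p_1}(v_1\,dx)$ with norm $N_1$, then it is bounded on $L^p(v\,dx)$ with norm $\leq N_0^{1-\theta}N_1^\theta$, where
\begin{equation*}
\frac{1}{p}=\frac{1-\theta}{p_0}+\frac{\theta}{p_1}, \qquad v^{1/p}=v_0^{(1-\theta)/p_0}v_1^{\theta/p_1}.
\end{equation*}
I will apply this with $p_0=2$, $v_0\equiv 1$, and $v_1=w^{1+\epsilon}$, choosing $p_1$ so that both equalities hold for the target pair $(p,w)$. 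The identity for $v$ forces $\theta(1+\epsilon)/p_1=1/p$, i.e.\ $\theta=p_1/(p(1+\epsilon))$. Substituting into the equation for $1/p$ and solving yields the matching condition $p_1=p-(p-2)\epsilon$. Thus, given $\epsilon>0$ small, setting $\delta=|p-2|\epsilon$ and $p_1=p+\delta$ when $p>2$, $p_1=p-\delta$ when $p<2$, $p_1=p$ when $p=2$ produces a pair $(p_1,1+\epsilon)\in U(w,p)$ (by taking $\epsilon$ small enough relative to the size of the $(w,p)$ set), with the corresponding $\theta\in(0,1)$.

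With this bookkeeping in place, the Stein–Weiss theorem applied to $T_m$ immediately gives
\begin{equation*}
\|T_m f\|_{p,w}\leq \|m\|_\infty^{1-\theta}\|m\|_{M^{p_1}(w^{1+\epsilon})}^\theta\, \|f\|_{p,w},
\end{equation*}
which, taking the infimum over admissible constants, is exactly the asserted inequality for $\|m\|_{M^p(w)}$ in each of the three regimes $p>2$, $p=2$, $1<p<2$.

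The main technical point is the algebraic matching of the interpolation parameters with the weight identity. Once one observes that the relation $\delta=|p-2|\epsilon$ makes both conditions in Stein–Weiss consistent, everything reduces to verifying that $(p_1,1+\epsilon)$ sits inside the given $(w,p)$ set for small enough $\epsilon$, which is automatic from the definition of $U(w,p)=(p-\sigma,p+\sigma)\times[1,1+\tau)$. The only subtle issue is ensuring applicability of Stein–Weiss to the linear operator $T_m$ on a suitable dense class (e.g.\ $L^2\cap L^p_w\cap L^{p_1}(w^{1+\epsilon})$), but this is routine since $\mathscr{S}(\mathbb{R}^n)$ is dense in all weighted $L^p$ spaces with $A_p$ weights.
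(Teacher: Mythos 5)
Your proposal is correct and follows essentially the same route as the paper: Stein--Weiss interpolation with change of measure between the trivial $L^2$ bound $\|m\|_{M^2}\leq\|m\|_\infty$ (Plancherel) and the weighted bound $M^{p_1}(w^{1+\epsilon})$, with the parameters tuned so the interpolated space is exactly $L^p(w)$. One small slip: solving your two conditions actually gives $p_1=p+(p-2)\epsilon$, not $p_1=p-(p-2)\epsilon$; your subsequent assignment ($p_1=p+\delta$ for $p>2$, $p_1=p-\delta$ for $p<2$, $\delta=|p-2|\epsilon$) is nonetheless the correct one and agrees with the paper's choice $\delta=\epsilon|p-2|$.
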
   
\begin{proof} 
Let $1<p<2$ and $w\in A_p$.  Then, there exist $\epsilon_0, \delta_0>0$ such 
that $(p-\delta,1+\epsilon)\in U(w,p)$ for all 
$\epsilon\in (0,\epsilon_0]$ and $\delta\in (0,\delta_0]$.     
Let $1/p=(1-\theta)/2+\theta/(p-\delta)$, $\delta\in (0, \delta_0]$. Then, 
since $m\in M^{p-\delta}(w^{1+\epsilon})$, by interpolation with change of 
measures of Stein-Weiss (see \cite{BL}) 
between $L^2$ and $L^{p-\delta}(w^{1+\epsilon})$ 
boundedness, we have 
$$\|m\|_{M^p(w^{p\theta(1+\epsilon)/(p-\delta)})}\leq \|m\|_{\infty}^{1-\theta}
\|m\|_{M^{p-\delta}(w^{1+\epsilon})}^{\theta}. $$ 
Note that 
$p\theta/(p-\delta)= (2-p)/(2-p+\delta)$.    Thus we can choose $\epsilon, 
\delta>0$ so that $p\theta(1+\epsilon)/(p-\delta)=1$.   This completes the proof for $p\in (1,2)$.   
\par 
Suppose that $2<p<\infty$ and $w\in A_p$. Then there exist $\epsilon_0>0$ 
$\delta_0>0$ such that $(p+\delta,1+\epsilon)\in U(w,p)$ for all 
$\epsilon\in (0,\epsilon_0]$ and $\delta\in (0,\delta_0]$.  So, 
$m\in M^{p+\delta}(w^{1+\epsilon})$ for all $\epsilon\in (0,\epsilon_0]$ and 
$\delta\in (0,\delta_0]$.  
Similarly to the case $1<p<2$, applying interpolation, we have 
$$\|m\|_{M^p(w^{p\theta(1+\epsilon)/(p+\delta)})}\leq \|m\|_{\infty}^{1-\theta}
\|m\|_{M^{p+\delta}(w^{1+\epsilon})}^{\theta} $$ 
with $p\theta/(p+\delta)= (p-2)/(p+\delta-2)$.
 Taking $\epsilon, \delta$ so that $p\theta(1+\epsilon)/(p+\delta)=1$, 
we conclude the proof for $p\in (2, \infty)$. 
The case $p=2$ can be handled similarly. 
\end{proof}
\par 
To treat Fourier multipliers arising from Littlewood-Paley functions in 
\eqref{lpop}  and\eqref{dlpop} simultaneously, we slightly generalize the 
usual notion of homogeneity.   
\begin{definition} 
Let $f$ be a function on $\Bbb R^n$. We say that $f$ is dyadically homogeneous 
of degree $\tau$, $\tau \in \Bbb R$,  if $f(2^k x)=2^{k\tau}f(x)$ for all  
$x\in \Bbb R^n\setminus\{0\}$ and all $k\in \Bbb Z$ $($the set of integers$)$. 
\end{definition} 
\par 
For $m\in M^p_w$, $1<p<\infty$, $w\in A_p$, we consider the spectral radius 
operator  
\begin{equation*}
\rho_{p,w}(m)=\lim_{k\to \infty}\|m^k\|_{M^p(w)}^{1/k}.  
\end{equation*}  
To prove a weighted version of Theorem B, we need an approximation result for 
Fourier multipliers in $M^p(w)$.  
\begin{proposition}    Let $1<p<\infty$,  $w\in A_p$ and 
$m\in L^\infty(\Bbb R^n)$.  
We assume that $m$ is dyadically homogeneous of degree $0$ and 
continuous on the closed annulus $B_0=\{\xi\in \Bbb R^n: 1\leq |\xi|\leq 2\}$. 
We further assume that there exists a $(w,p)$ set $U(w,p)$ such that  
$m\in M(U(w,p))$.  
Then, for any $\epsilon>0$, there exists $n\in M^p(w)$ 
which is  dyadically homogeneous of degree $0$ and in 
$C^\infty(\Bbb R^n\setminus\{0\})$ such that $\|m-n\|_{\infty}<\epsilon$ 
and $\rho_{p,w}(m-n)<\epsilon$.  
\end{proposition}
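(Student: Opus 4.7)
The plan is to approximate $m$ by a smooth function $n$ via mollification in ``log-polar'' coordinates, deduce the $L^\infty$-closeness from uniform continuity of $m$ on $B_0$, and then control the spectral radius $\rho_{p,w}(m-n)$ by applying Proposition 2.2 to the powers $(m-n)^k$.

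\textbf{Construction and $L^\infty$-closeness.} Since $m$ is dyadically homogeneous of degree $0$ and continuous on the compact annulus $B_0$, the function $F(s,\omega):=m(2^s\omega)$ is continuous, hence uniformly continuous, on the compact cylinder $(\Bbb R/\Bbb Z)\times S^{n-1}$. For small $\eta>0$ I would pick a smooth nonnegative $\Phi_\eta\in C_c^\infty(\Bbb R\times SO(n))$ supported in an $\eta$-neighborhood of $(0,\mathrm{id})$ with $\iint\Phi_\eta\,dt\,dR=1$, and set
\begin{equation*}
n(\xi)=\iint m(2^{-t}R^{-1}\xi)\,\Phi_\eta(t,R)\,dt\,dR.
\end{equation*}
Then $n\in C^\infty(\Bbb R^n\setminus\{0\})$, $n$ is dyadically homogeneous of degree $0$, and uniform continuity of $F$ gives $\|m-n\|_\infty\to 0$ as $\eta\to 0$, so in particular $\|m-n\|_\infty<\epsilon$ for $\eta$ small.

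\textbf{Spectral radius via Proposition 2.2.} The identity $T_{m_1 m_2}=T_{m_1}T_{m_2}$ makes $M^r(w^s)$ a commutative Banach algebra, hence $\|(m-n)^k\|_{M^r(w^s)}\leq \|m-n\|_{M^r(w^s)}^k$. Picking $(p\pm\delta,1+\epsilon')\in U(w,p)$ (sign as in Proposition 2.2) and applying Proposition 2.2 to $(m-n)^k$, then taking the $k$-th root and letting $k\to\infty$, one obtains
\begin{equation*}
\rho_{p,w}(m-n)\leq \|m-n\|_\infty^{1-\theta}\bigl(\|m\|_{M^{p\pm\delta}(w^{1+\epsilon'})}+\|n\|_{M^{p\pm\delta}(w^{1+\epsilon'})}\bigr)^\theta.
\end{equation*}
Thus the proof reduces to a bound on $\|n\|_{M^{p\pm\delta}(w^{1+\epsilon'})}$ \emph{uniform in} $\eta$: given such a bound, one can make $\eta$ so small that both $\|m-n\|_\infty<\epsilon$ and $\rho_{p,w}(m-n)<\epsilon$.

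\textbf{Main obstacle.} The delicate step is the uniform multiplier-norm bound on $n$, since naive Mikhlin estimates for $n$ blow up as $\eta\to 0$. I would instead use the integral representation of $n$ and Minkowski's inequality to get
\begin{equation*}
\|T_n\|_{L^q(v)\to L^q(v)}\leq\iint|\Phi_\eta(t,R)|\,\|T_{m(2^{-t}R^{-1}\cdot)}\|_{L^q(v)\to L^q(v)}\,dt\,dR,
\end{equation*}
and observe that each inner operator is conjugate (via dilation-rotation on the physical side) to $T_m$ acting on $L^q(v\circ\sigma_{t,R})$, where the twisted weight $v\circ\sigma_{t,R}$ has the same $A_q$-constant as $v$ since dilations and rotations preserve the $A_q$ class. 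The hard part is upgrading this $A_q$-constant preservation to a uniform multiplier-norm bound; I would exploit the openness of the $(w,p)$-set $U(w,p)=(p-\sigma,p+\sigma)\times[1,1+\tau)$ to absorb the weight twisting into slightly adjusted exponents $(p\pm\delta'',1+\epsilon'')$ still lying in $U(w,p)$, and combine this with the multiplier hypothesis $m\in M(U(w,p))$ to obtain $\|n\|_{M^{p\pm\delta}(w^{1+\epsilon'})}\leq C$ independent of $\eta$. Combined with the interpolation inequality and smallness of $\|m-n\|_\infty$, this yields both conclusions of the proposition.
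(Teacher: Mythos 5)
Your construction of $n$ by averaging over rotations and dyadic dilations is the same as the paper's (it takes $m_j(\xi)=\int_0^\infty\int_{O(n)}m(tA\xi)\varphi_j(A)\psi_j(t)\,dA\,dt/t$), and the deduction of $\|m-n\|_\infty<\epsilon$ from uniform continuity on $B_0$ is fine. The gap is in the second half, and you have correctly located it yourself: your argument reduces to a bound on $\|n\|_{M^{p\pm\delta}(w^{1+\epsilon'})}$ that is uniform in the mollification parameter, and the route you sketch for it does not work. Conjugating $T_{m(2^{-t}R^{-1}\cdot)}$ back to $T_m$ moves the weight to $w^s\circ\sigma_{t,R}$, and while rotations and dilations preserve the $A_q$ \emph{constant}, membership in $M^q(v)$ is a property of the specific weight $v$, not of its $A_q$ constant; the hypothesis $m\in M(U(w,p))$ only provides boundedness for the weights $w^s$ with $(r,s)\in U(w,p)$, never for $w^s\circ\sigma_{t,R}$. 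Adjusting the exponents inside the open set $U(w,p)$ still only produces weights of the form $w^{s'}$, so the twisting cannot be ``absorbed'' that way, and for a general $A_p$ weight $w(2^{-t}x)$ is not pointwise comparable to $w(x)$. So the uniform multiplier-norm bound remains unproved.

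The paper avoids needing any such bound. The key observation you are missing is that one should control the \emph{spectral radius} of the mollification rather than its multiplier norm: the derivatives of the powers satisfy $|\partial_\xi^\gamma m_j(\xi)^k|\leq C_{j,M}\,k^{M}\|m\|_\infty^{k}|\xi|^{-|\gamma|}$, with the $j$-dependence confined to a factor that is only polynomial in $k$. The weighted Mikhlin--H\"ormander theorem of Kurtz--Wheeden then gives $\|m_j^k\|_{M^r(w^s)}\leq C\,C_{j,M}k^M\|m\|_\infty^k$, and taking $k$-th roots and letting $k\to\infty$ kills the $j$-dependent constant entirely, yielding $\rho_{r,w^s}(m_j)\leq\|m\|_\infty$ uniformly in $j$. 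Combining this with the subadditivity of the spectral radius for commuting operators, $\rho_{r,w^s}(m-m_j)\leq\rho_{r,w^s}(m)+\rho_{r,w^s}(m_j)$, and with the interpolation inequality of Proposition 2.2 applied to $(m-m_j)^k$ (exactly as you do), one gets $\rho_{p,w}(m-m_j)\leq\|m-m_j\|_\infty^{1-\theta}\bigl(\rho_{r,w^s}(m)^\theta+\|m\|_\infty^\theta\bigr)$, which tends to $0$. Your Banach-algebra estimate $\|(m-n)^k\|_{M^r}\leq\|m-n\|_{M^r}^k$ discards precisely the cancellation that makes the argument close; replacing it with the limit $\lim_k\|(m-n)^k\|_{M^r}^{1/k}=\rho_{r,w^s}(m-n)$ and the polynomial-in-$k$ derivative estimate is what repairs the proof.
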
 
\begin{proof}
Let $\{\varphi_j\}_{j=1}^\infty$ be a sequence of functions on $O(n)$ 
such that 
\begin{enumerate} 
\item[$\bullet$] each $\varphi_j$ is infinitely differentiable and 
non-negative, 
\item[$\bullet$] for any neighborhood $U$ of the identity in $O(n)$,  there 
exists a positive integer $N$ such that $\supp(\varphi_j)\subset U$ 
if $j\geq N$,   
\item[$\bullet$] $\int_{O(n)} \varphi_j(A)\, dA= 1$, where $dA$ is the Haar 
measure on $O(n)$.  
\end{enumerate} 
Also, let $\{\psi_j\}_{j=1}^\infty$ be a sequence of non-negative functions 
in  $C^\infty(\Bbb R)$ such that $\supp(\psi_j)\subset [1-2^{-j}, 1+2^{-j}]$ 
and $\int_0^\infty \psi_j(t)\, dt/t =1$.  
Define 
$$m_j(\xi)=\int_0^\infty \int_{O(n)} m(tA\xi)\varphi_j(A)\psi_j(t)\, dA\, 
\frac{dt}{t}. $$  
Then $m_j$ is dyadically homogeneous of degree $0$, infinitely differentiable 
and $m_j\to m$ uniformly in $\Bbb R^n\setminus\{0\}$ 
by the continuity of $m$ on $B_0$. This can be shown similarly to 
\cite[pp. 123-124]{H}, where we can find the case when $m$ is homogeneous of 
degree $0$.    
Also, for a positive integer $k$, the derivatives of $m_j^k$ satisfy 
\begin{equation}\label{deriv}
|\partial_\xi^\gamma m_j(\xi)^k|\leq C_{j,k,M}\|m\|_\infty^k 
 |\xi|^{-|\gamma|}, \quad 
\partial_\xi^\gamma
=(\partial/\partial \xi_1)^{\gamma_1}\dots 
(\partial/\partial \xi_n)^{\gamma_n} 
\end{equation}  
for all multi-indices  $\gamma$ with $|\gamma| \leq M$, where  $M$ is 
any positive integer,  
$\gamma=(\gamma_1, \dots, \gamma_n)$, $|\gamma|=\gamma_1+\dots+\gamma_n$, 
$\gamma_j\in \Bbb Z$, $\gamma_j\geq 0$ and we have  
$C_{j,k,M}\leq C_{j, M}k^M$ with a constant $C_{j, M}$ independent of $k$. 
By \eqref{deriv}, if $M$ is sufficiently large,  it follows that 
\begin{equation*}
\|m_j^k\|_{M^p(w)}\leq CC_{j,k,M}\|m\|_\infty^k, \quad w\in A_p, 1<p<\infty 
\end{equation*} 
with a constant $C$ independent of $k$ (see \cite{CF, KW}). Thus, by the 
evaluation of  $C_{j,k,M}$ we have 
\begin{equation}\label{est1}
\rho_{p, w}(m_j)\leq \|m\|_\infty.   
\end{equation} 
\par 
Since $m, m_j\in M(U(w,p))$, 
by Proposition $2.2$, we can find $r$ close to $p$, $s>1$ with 
$(r,s)\in U(w,p)$  and $\theta\in (0,1)$ such that 
$$\|(m-m_j)^k\|_{M^p(w)}\leq \|(m-m_j)^k\|_\infty^{1-\theta}
\|(m-m_j)^k\|_{M^r(w^{s})}^\theta. $$  
It follows  that 
\begin{equation*}
\rho_{p,w}(m-m_j)\leq \|m-m_j\|_\infty^{1-\theta}\rho_{r, w^s}(m-m_j)^\theta. 
\end{equation*} 
Thus, by \eqref{est1} we have 
\begin{align*}\label{est2}
\rho_{p,w}(m-m_j)
&\leq \|m-m_j\|_\infty^{1-\theta}(\rho_{r, w^s}(m)^\theta+
\rho_{r, w^s}(m_j)^\theta) 
\\ 
&\leq \|m-m_j\|_\infty^{1-\theta}(\rho_{r, w^s}(m)^\theta+
\|m\|_\infty^\theta). 
\end{align*} 
 This completes the proof since 
 $\|m-m_j\|_\infty \to 0$ as $j\to \infty$. 
\end{proof}
\par 
Applying Proposition $2.4$, we can generalize Theorem B as follows.   
\begin{theorem}  Suppose that $1<p<\infty,  w\in A_p$ and that $m\in 
L^\infty(\Bbb R^n)$ fulfills the 
hypotheses of Proposition $2.4$. Also, suppose that $m(\xi)\neq 0$ for every 
$\xi\neq 0$. 
Let $\varphi(z)$ be holomorphic in $\Bbb C\setminus\{0\}$.  Then we have 
$\varphi(m(\xi))\in M^p(w)$. 
\end{theorem}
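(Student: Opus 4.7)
The plan is to realize $\varphi(m)$ via the Cauchy integral formula and then to show that the resolvent $z\mapsto(z-m)^{-1}$ belongs to $M^p(w)$ with norm uniformly bounded along a suitable contour, so that the integral can be interpreted as a Bochner integral in $M^p(w)$. Because $m$ is continuous and non-vanishing on the compact set $B_0$ and is dyadically homogeneous of degree $0$, we have $0<R_1\le |m(\xi)|\le R_2<\infty$ on $\Bbb R^n\setminus\{0\}$. I fix $r_1,r_2$ with $0<r_1<R_1$ and $r_2>R_2$ and let $\Gamma$ be the boundary of the annulus $\{r_1\le |z|\le r_2\}$, oriented so that the outer circle is traversed counterclockwise and the inner one clockwise. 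With $\delta=\min(R_1-r_1,r_2-R_2)$, one has $|z-m(\xi)|\ge\delta>0$ for all $z\in\Gamma$ and $\xi\ne 0$. Since $\varphi$ is holomorphic on a neighborhood of the closed annulus, Cauchy's formula gives, pointwise in $\xi$,
\[
\varphi(m(\xi))=\frac{1}{2\pi i}\oint_\Gamma\frac{\varphi(z)}{z-m(\xi)}\,dz.
\]

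The main analytic input is Proposition $2.4$: given any $\eta>0$, I choose $n$, dyadically homogeneous of degree $0$, in $C^\infty(\Bbb R^n\setminus\{0\})\cap M^p(w)$, with $\|m-n\|_\infty<\eta$ and $\rho_{p,w}(m-n)<\eta$. Fixing $\eta<\delta/2$, we obtain $|z-n(\xi)|\ge\delta/2$ uniformly in $z\in\Gamma$. A Fa\`a di Bruno computation applied to $w\mapsto(z-w)^{-k}$ composed with $n$, combined with the homogeneity bounds $|\partial_\xi^\alpha n(\xi)|\le C_\alpha|\xi|^{-|\alpha|}$, yields
\[
|\partial_\xi^\gamma(z-n)^{-k}(\xi)|\le C_{M,\delta,n}\,k^M\|(z-n)^{-1}\|_\infty^{k}|\xi|^{-|\gamma|},\quad |\gamma|\le M,
\]
uniformly in $z\in\Gamma$. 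This is exactly the type of estimate invoked in the proof of Proposition $2.4$; the H\"ormander--Mihlin multiplier theorem for $A_p$ weights (\cite{CF,KW}) then gives $\|((z-n)^{-1})^k\|_{M^p(w)}\le Ck^M\|(z-n)^{-1}\|_\infty^k$, hence $\rho_{p,w}((z-n)^{-1})\le\|(z-n)^{-1}\|_\infty\le 2/\delta$.

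The heart of the argument is the resolvent expansion
\[
(z-m)^{-1}=\sum_{k=0}^\infty(z-n)^{-k-1}(m-n)^k.
\]
Submultiplicativity $\rho_{p,w}(fg)\le\rho_{p,w}(f)\rho_{p,w}(g)$ gives $\rho_{p,w}((z-n)^{-1}(m-n))\le 2\eta/\delta<1$, so the $M^p(w)$-norms of the terms decay geometrically, uniformly in $z\in\Gamma$. Consequently the series converges absolutely in $M^p(w)$ to a representative of the pointwise inverse, with $\sup_{z\in\Gamma}\|(z-m)^{-1}\|_{M^p(w)}<\infty$, and the identity $(z-m)^{-1}-(z'-m)^{-1}=(z'-z)(z-m)^{-1}(z'-m)^{-1}$ gives continuity of $z\mapsto(z-m)^{-1}$ in $M^p(w)$. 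Substituting into Cauchy's formula, the contour integral is a Bochner integral in the Banach space $M^p(w)$, and one concludes $\varphi(m)\in M^p(w)$ with $\|\varphi(m)\|_{M^p(w)}\le(2\pi)^{-1}|\Gamma|\,\|\varphi\|_{L^\infty(\Gamma)}\sup_{z\in\Gamma}\|(z-m)^{-1}\|_{M^p(w)}$.

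The principal obstacle is the polynomial-in-$k$ growth of $\|((z-n)^{-1})^k\|_{M^p(w)}$ uniformly for $z$ on the compact contour $\Gamma$: the chain-rule bookkeeping for $\partial_\xi^\gamma(z-n)^{-k}$ must be controlled explicitly in $k$, and the constant in the H\"ormander--Mihlin criterion must be uniform in $z$. Both points are arranged by keeping $\Gamma$ at positive distance $\delta/2$ from the range of $n$ and fixing $n$ once and for all at the outset. Once this estimate is secured, the Neumann series and Bochner integration steps are routine, and the overall scheme mirrors the Wiener--L\'evy proof in \cite[Vol.\ I, Chap.\ VI]{Z} referenced in the introduction.
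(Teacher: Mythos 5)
Your proof is correct, and it shares its essential machinery with the paper's argument --- Proposition 2.4 supplies the smooth, dyadically homogeneous approximant $n$ with $\|m-n\|_\infty$ and $\rho_{p,w}(m-n)$ small, Cauchy's formula and a geometric series do the rest, and the weighted multiplier theorem of \cite{CF, KW} controls the smooth pieces --- but the contour integration is organized differently, and the difference is worth recording. The paper integrates over the $\xi$-dependent circle $\zeta=n(\xi)+2\epsilon_0e^{i\theta}$ centered at the approximant; expanding $(\zeta-m(\xi))^{-1}$ in powers of $(m(\xi)-n(\xi))/(2\epsilon_0e^{i\theta})$ then converts the Cauchy integral directly into the series $\tfrac{1}{2\pi}\sum_k(2\epsilon_0)^{-k}(m-n)^kM_k$ with $M_k(\xi)=\int_0^{2\pi}\varphi(n(\xi)+2\epsilon_0e^{i\theta})e^{-ik\theta}\,d\theta$, so the only multiplier estimates needed are $\|M_k\|_{M^p(w)}\le C$ uniformly in $k$ and $\|(m-n)^k\|_{M^p(w)}\le\epsilon_0^k$ for large $k$; no operator-valued resolvent and no vector-valued integration appear. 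You instead fix the boundary of an annulus containing the range of $m$, prove $\sup_{z\in\Gamma}\|(z-m)^{-1}\|_{M^p(w)}<\infty$ by a Neumann series around $(z-n)^{-1}$, and integrate in the Banach algebra $M^p(w)$; this is the standard holomorphic functional calculus, at the cost of one additional estimate, namely $\rho_{p,w}((z-n)^{-1})\le\|(z-n)^{-1}\|_\infty$ uniformly in $z\in\Gamma$, which you rightly single out as the delicate point and which is the same $k^M$-growth computation as \eqref{deriv} applied to powers of the resolvent of $n$. Both routes are sound; the paper's moving contour gives the leaner bookkeeping, while yours is more modular and makes the Wiener--L\'evy structure (cited from \cite{Z} in the introduction) fully explicit. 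In either version one should note, as a final step, that the limit obtained in $M^p(w)$ coincides with the pointwise function $\varphi(m(\xi))$, which holds because the relevant convergence is also uniform in $\xi$.
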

\begin{proof} 
Define $\epsilon_0>0$ by  
$$4\epsilon_0=\min_{\xi\in \Bbb R^n\setminus\{0\}}|m(\xi)| 
=\min_{1\leq|\xi|\leq 2}|m(\xi)|. 
$$ 
By Proposition $2.4$, there is $n\in M^p(w)$ which is dyadically 
homogeneous of 
degree $0$ and infinitely differentiable in $\Bbb R^n\setminus\{0\}$ such that 
$\|m-n\|_{\infty}<\epsilon_0$ and $\rho_{p,w}(m-n)<\epsilon_0$.  
If we consider a curve $C: n(\xi)+2\epsilon_0e^{i\theta}, 0\leq \theta\leq 
2\pi$, 
Cauchy's formula  can be applied to  
represent $\varphi(m(\xi))$ by a contour integral as follows:
$$\varphi(m(\xi))= \frac{1}{2\pi i}\int_C\frac{\varphi(\zeta)}{\zeta-m(\xi)}\, 
d\zeta
=\frac{\epsilon_0}{\pi}\int_0^{2\pi}
\frac{\varphi(n(\xi)+2\epsilon_0e^{i\theta})}{2\epsilon_0e^{i\theta}+n(\xi)-
m(\xi)}e^{i\theta}\, d\theta, \quad \xi\neq 0.  $$   
Note that 
$$\frac{e^{i\theta}}{2\epsilon_0e^{i\theta}+n(\xi)-m(\xi)}
=\frac{1}{2\epsilon_0}\sum_{k=0}^\infty
\left(\frac{m(\xi)-n(\xi)}{2\epsilon_0e^{i\theta}}\right)^k; $$  
the series converges uniformly in $\theta\in [0,2\pi]$ since $|m(\xi)-n(\xi)|
<\epsilon_0$.    Thus 
$$\varphi(m(\xi))=\frac{1}{2\pi}\sum_{k=0}^\infty 
\left(\frac{m(\xi)-n(\xi)}{2\epsilon_0}\right)^k M_k(\xi)
$$ 
uniformly in $\Bbb R^n\setminus\{0\}$, where 
$$M_k(\xi)
 =\int_0^{2\pi}\varphi(n(\xi)+2\epsilon_0e^{i\theta})e^{-ik\theta}\, d\theta.
$$ 
\par 
Since $|n(\xi)+2\epsilon_0e^{i\theta}|\geq \epsilon_0$, we can see that 
$M_k(\xi)$ is dyadically homogeneous of degree $0$ and infinitely 
differentiable in $\Bbb R^n\setminus \{0\}$; also the derivative satisfies  
$$|\partial_\xi^\gamma M_k(\xi)|\leq C_\gamma |\xi|^{-|\gamma|} 
$$  
for every multi-index $\gamma$ with a constant $C_\gamma$ independent of 
$k$. 
This implies that $\|M_k\|_{M^p(w)}
\leq C$ with a constant $C$ independent of $k$ (see \cite{CF, KW}).  
Thus we have $\varphi(m)\in M^p(w)$ and 
$$\|\varphi(m)\|_{M^p(w)}\leq \frac{1}{2\pi}\sum_{k=0}^\infty 
(2\epsilon_0)^{-k}\|(m-n)^k\|_{M^p(w)}\|M_k\|_{M^p(w)},   $$   
since the series converges, for $\|(m-n)^k\|_{M^p(w)}\leq \epsilon_0^k$ 
if $k$ is sufficiently large.  
This completes the proof.   

\end{proof}  
 Theorem 2.5 in particular implies the following. 
\begin{corollary}\label{inverse}  Let  $1<p<\infty$ and $w\in A_p$. 
Let $m$ be a dyadically homogeneous function of degree $0$ such that 
$m\in M^r(v)$ for all 
$r\in (1,\infty)$ and all $v\in A_r$.  
We  assume that $m$ is continuous on $B_0$ and does not 
vanish there. Then $m^{-1}\in M^p(w)$.   
\end{corollary}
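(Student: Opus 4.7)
The plan is to apply Theorem 2.5 with the function $\varphi(z) = 1/z$, which is holomorphic on $\Bbb C\setminus\{0\}$. To do so, I need to check that $m$ meets every hypothesis of Theorem 2.5, which in turn amounts to verifying the hypotheses of Proposition 2.4 together with the nonvanishing of $m$ on all of $\Bbb R^n\setminus\{0\}$.

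First I would note that $m$ is bounded: by dyadic homogeneity of degree $0$, every $\xi\neq 0$ lies in the orbit $\{2^k \eta : k\in\Bbb Z\}$ of some $\eta\in B_0$, so $\|m\|_\infty = \max_{\xi\in B_0}|m(\xi)|<\infty$ by continuity on the compact annulus $B_0$. The same argument shows that $m$ does not vanish anywhere on $\Bbb R^n\setminus\{0\}$, since it does not vanish on $B_0$. Dyadic homogeneity of degree $0$ and continuity on $B_0$ are assumed.

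Next I would produce a $(w,p)$ set $U(w,p) = (p-\sigma,p+\sigma)\times [1,1+\tau)$ with $m\in M(U(w,p))$. Because $w\in A_p$, the standard self-improvement of $A_p$ weights (see \cite{GR}) gives $\sigma\in(0,p-1)$ and $\tau>0$ so that $w^s\in A_r$ for every pair $(r,s)\in (p-\sigma,p+\sigma)\times [1,1+\tau)$. For any such pair, the global hypothesis on $m$ (namely, $m\in M^r(v)$ for all $r\in(1,\infty)$ and all $v\in A_r$) applied with $v=w^s$ yields $m\in M^r(w^s)$. Thus $m\in M(U(w,p))$, and similarly $m^k\in M(U(w,p))$ for every $k$, so the setup of Proposition 2.4 is available.

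With all hypotheses of Theorem 2.5 confirmed, I would simply invoke the theorem with $\varphi(z)=1/z$ to conclude $\varphi(m)=m^{-1}\in M^p(w)$. There is no real obstacle here, since the corollary is structurally just a specialization of Theorem 2.5; the only step requiring a moment of care is producing the $(w,p)$ set, which is a one-line consequence of the self-improvement properties of $A_p$.
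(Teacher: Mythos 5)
Your proposal is correct and is essentially the paper's own argument: the paper derives Corollary 2.6 as a direct specialization of Theorem 2.5 with $\varphi(z)=1/z$, the only work being exactly the verifications you carry out (boundedness and global nonvanishing via dyadic homogeneity plus continuity on $B_0$, and the existence of a $(w,p)$ set via the self-improvement of $A_p$ weights combined with the hypothesis that $m\in M^r(v)$ for all $r$ and $v\in A_r$).
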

\par 
We have  applications of Theorem 2.5 and  Corollary \ref{inverse} 
 to the theory of Littlewood-Paley operators. 
 Let $w\in A_p$, $1<p<\infty$. We say that $g_\psi$ of \eqref{lpop} 
 is bounded on $L^p_w$ if 
there exists a constant $C$ such that $\|g_\psi(f)\|_{p,w}\leq C\|f\|_{p,w}$ 
for $f\in L^2_w\cap L^2$.  The unique sublinear extension on $L^p_w$ is also 
denoted by $g_\psi$.  The $L^p_w$ boundedness for $\Delta_\psi$ of 
\eqref{dlpop}  is considered similarly. 
\par 
Let $\mathscr H$ be the Hilbert space of 
functions $u(t)$ on $(0,\infty)$ such that $\|u\|_{\mathscr H}=
\left(\int_0^\infty|u(t)|^2\, dt/t\right)^{1/2}<\infty$.  
We consider weighted spaces 
$L^p_{w,\mathscr H}$ of functions 
$h(y,t)$ with the norm  
$$\|h\|_{p,w, \mathscr H}=\left(\int_{\Bbb R^n}\|h^y\|_{\mathscr H}^p w(y)\, dy
\right)^{1/p}, $$ 
where $h^y(t)=h(y,t)$. If $w=1$ identically, the spaces $L^p_{w,\mathscr H}$ 
will be written simply as $L^p_{\mathscr H}$.  
\par 
Define 
\begin{equation}
E_\psi^\epsilon(h)(x)=\int_0^\infty\int_{\Bbb R^n} 
\psi_t(x-y)h_{(\epsilon)}(y,t)\,dy\,\frac{dt}{t}, 
\end{equation} 
where  $h\in L^2_{\mathscr H}$ and $h_{(\epsilon)}(y,t)=h(y,t)\chi_{(\epsilon, \epsilon^{-1})}(t)$, $0<\epsilon<1$,  and 
we assume that $\psi\in L^1(\Bbb R^n)$ with \eqref{cancell}.  
\par 
Then we have the following.  

\begin{lemma} Let $1<r<\infty$ and $v\in A_r$.  We assume that 
$$\|g_\psi(f)\|_{r',v^{-r'/r}}\leq C_0(r,v)\|f\|_{r',v^{-r'/r}}.   $$ 
Then, if  $h \in L^r_{v,\mathscr H}\cap L^2_{\mathscr H}$, 
we have 
$$\sup_{\epsilon\in (0,1)}\|E_{\widetilde{\bar{\psi}}}^\epsilon 
(h)\|_{r,v}  
\leq C_0(r,v)\|h\|_{r,v, \mathscr H},    
$$  
where $\bar{\psi}$ denotes the complex conjugate.  
\end{lemma}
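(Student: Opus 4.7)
The plan is a duality argument: the operator $E^\epsilon_{\widetilde{\bar{\psi}}}$ is, modulo the $t$-cutoff $\chi_{(\epsilon,\epsilon^{-1})}$, the Banach-space adjoint of the linear map $T\colon f\mapsto[(y,t)\mapsto f*\psi_t(y)]$, which the hypothesis presents as a bounded map from $L^{r'}(v^{-r'/r})$ into the Hilbert-space-valued Lebesgue space $L^{r'}_{v^{-r'/r},\mathscr H}$ with operator norm at most $C_0(r,v)$. Since $v\in A_r$ forces $v^{-r'/r}=v^{-1/(r-1)}\in A_{r'}$, the unweighted pairing $\int f\bar g\,dx$ identifies $L^r(v)$ with the dual of $L^{r'}(v^{-r'/r})$, and the pairing $\int\!\int h\bar k\,dy\,dt/t$ identifies $L^r_{v,\mathscr H}$ with the dual of $L^{r'}_{v^{-r'/r},\mathscr H}$.

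Next, for $f$ in a norm-dense test class of $L^{r'}(v^{-r'/r})$ (say $\mathscr S(\Bbb R^n)\cap L^2$) and $h\in L^r_{v,\mathscr H}\cap L^2_{\mathscr H}$, the elementary identity $\overline{\psi_t(y-x)}=\widetilde{\bar{\psi}}_t(x-y)$ together with Fubini's theorem yields
\begin{equation*}
\int_{\Bbb R^n}\!\int_0^\infty (f*\psi_t)(y)\,\overline{h_{(\epsilon)}(y,t)}\,\frac{dt}{t}\,dy=\int_{\Bbb R^n} f(x)\,\overline{E^\epsilon_{\widetilde{\bar{\psi}}}(h)(x)}\,dx.
\end{equation*}
Fubini is legitimate because $\chi_{(\epsilon,\epsilon^{-1})}(t)$, $h\in L^2_{\mathscr H}$, $f\in L^2$ and $\psi\in L^1(\Bbb R^n)$ together make the triple integrand absolutely integrable on $\Bbb R^n\times\Bbb R^n\times(\epsilon,\epsilon^{-1})$. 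Applying Cauchy-Schwarz in $\mathscr H$ followed by weighted H\"older with exponents $(r',r)$ to the left-hand side, and invoking the hypothesis, gives
\begin{equation*}
\Bigl|\int_{\Bbb R^n} f(x)\,\overline{E^\epsilon_{\widetilde{\bar{\psi}}}(h)(x)}\,dx\Bigr|\le\|g_\psi(f)\|_{r',v^{-r'/r}}\|h\|_{r,v,\mathscr H}\le C_0(r,v)\|f\|_{r',v^{-r'/r}}\|h\|_{r,v,\mathscr H}.
\end{equation*}

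Because this inequality holds on a norm-dense subset of $L^{r'}(v^{-r'/r})$, the duality identification upgrades it to the assertion that $E^\epsilon_{\widetilde{\bar{\psi}}}(h)$ represents an element of $L^r(v)$ with $\|E^\epsilon_{\widetilde{\bar{\psi}}}(h)\|_{r,v}\le C_0(r,v)\|h\|_{r,v,\mathscr H}$, uniformly in $\epsilon\in(0,1)$. I expect no serious obstacle here; the only point that merits care is the Fubini justification, which is exactly why the auxiliary hypothesis $h\in L^2_{\mathscr H}$ and the cutoff on $t$ are imposed. The density of $\mathscr S\cap L^2$ in $L^{r'}(v^{-r'/r})$ for $A_{r'}$-weights is classical.
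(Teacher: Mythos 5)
Your proposal is correct and follows essentially the same route as the paper: both arguments pair $E^\epsilon_{\widetilde{\bar{\psi}}}(h)$ against a test function $f$, use Fubini to rewrite the pairing as $\int\!\int \bar{\psi}_t*f(y)\,h(y,t)\,dy\,dt/t$, apply Cauchy--Schwarz in $\mathscr H$ and weighted H\"older, and then invoke the assumed $L^{r'}(v^{-r'/r})$ bound for $g_\psi$ together with the duality between $L^{r'}(v^{-r'/r})$ and $L^r(v)$. The paper is merely terser about the Fubini justification and the density of the test class, points you handle explicitly.
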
  
\begin{proof}  
For $f\in \mathscr S(\Bbb R^n)$, we see that 
\begin{align*}
\left|\int_{\Bbb R^n} E_{\widetilde{\bar{\psi}}}^\epsilon(h)(x)f(x)
\, dx\right| &=\left|\int_{\Bbb R^n} 
\left(\int_\epsilon^{\epsilon^{-1}}\int_{\Bbb R^n} 
\widetilde{\bar{\psi}}_t(x-y)h(y,t)\,dy
\,\frac{dt}{t}\right)f(x)\, dx\right| 
\\ 
&=\left|\int_\epsilon^{\epsilon^{-1}}\int_{\Bbb R^n} \bar{\psi}_t*f(y)
h(y,t)\,dy\,\frac{dt}{t}\right|
\\ 
&\leq \int_{\Bbb R^n} g_{\psi}(\bar{f})(y) \|h^y\|_{\mathscr H}\, dy. 
\end{align*}
Thus, by H\"{o}lder's inequality, we have  
\begin{align*} 
\left|\int_{\Bbb R^n} E_{\widetilde{\bar{\psi}}}^\epsilon(h)(x)f(x)
\, dx\right| 
&\leq \|g_{\psi}(\bar{f})\|_{r', v^{-r'/r}}
\left(\int\|h^y\|_{\mathscr H}^r v(y)\, dy\right)^{1/r}
\\ 
&\leq C_0(r,v)\|f\|_{r', v^{-r'/r}}
\left(\int\|h^y\|_{\mathscr H}^r v(y)\, dy\right)^{1/r}.  
\end{align*}
Taking the supremum over $f$ with $\|f\|_{r', v^{-r'/r}}\leq 1$, we get the 
desired result.  
\end{proof}  

 By applying Lemma 2.7, we have the following. 
\begin{proposition}  
Suppose that $g_\psi$ satisfies   
the hypothesis of Lemma $2.7$ with $r\in (1,\infty)$ and $v\in A_r$. Also, 
we assume that    
$$\|g_\psi(f)\|_{r,v}\leq C_1(r,v)\|f\|_{r,v}.   $$ 
Put 
$$ m(\xi)= \int_0^\infty |\hat{\psi}(t\xi)|^2\, \frac{dt}{t}. $$  
Then 
$\|m\|_{M^r(v)}\leq C_0(r,v)C_1(r,v)$.   
\end{proposition}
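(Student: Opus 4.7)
The plan is to factor the truncated multiplier operator $T_{m_\epsilon}$, with $m_\epsilon(\xi):=\int_\epsilon^{\epsilon^{-1}}|\hat\psi(t\xi)|^2\,dt/t$, as a composition $E^\epsilon_{\widetilde{\bar\psi}}\circ W_\psi$, where $W_\psi(f)(y,t):=f*\psi_t(y)$ is the ``expansion'' into Littlewood--Paley pieces. Each factor is controlled by one of the two hypotheses on $g_\psi$: the direct bound gives $\|W_\psi(f)\|_{r,v,\mathscr H}=\|g_\psi(f)\|_{r,v}\leq C_1(r,v)\|f\|_{r,v}$, and Lemma~$2.7$, whose hypothesis is precisely the $L^{r'}(v^{-r'/r})$-bound for $g_\psi$ with constant $C_0(r,v)$, gives $\|E^\epsilon_{\widetilde{\bar\psi}}(h)\|_{r,v}\leq C_0(r,v)\|h\|_{r,v,\mathscr H}$ uniformly in $\epsilon\in(0,1)$.

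The identity $E^\epsilon_{\widetilde{\bar\psi}}\circ W_\psi=T_{m_\epsilon}$ on $\mathscr S(\Bbb R^n)$ is a Fourier computation: from $\widehat{\widetilde{\bar\psi}_t}(\xi)=\overline{\hat\psi(t\xi)}$ one obtains $\widehat{\widetilde{\bar\psi}_t*\psi_t}(\xi)=|\hat\psi(t\xi)|^2$, so that for $f\in\mathscr S$,
$$E^\epsilon_{\widetilde{\bar\psi}}(W_\psi(f))(x)=\int_\epsilon^{\epsilon^{-1}}\widetilde{\bar\psi}_t*\psi_t*f(x)\,\frac{dt}{t}$$
has Fourier transform $m_\epsilon(\xi)\hat f(\xi)$. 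The integrability hypothesis of Lemma~$2.7$ is met because the $t$-truncation places $W_\psi(f)$ automatically in $L^2_{\mathscr H}$: by Plancherel,
$$\int_\epsilon^{\epsilon^{-1}}\|f*\psi_t\|_2^2\,\frac{dt}{t}=\int|\hat f(\xi)|^2 m_\epsilon(\xi)\,d\xi\leq\|m_\epsilon\|_\infty\|f\|_2^2<\infty,$$
since $\|\hat\psi\|_\infty\leq\|\psi\|_1$ forces $m_\epsilon\in L^\infty$ for each fixed $\epsilon>0$. Composing the two norm bounds yields
$$\|T_{m_\epsilon}(f)\|_{r,v}\leq C_0(r,v)C_1(r,v)\|f\|_{r,v}$$
uniformly in $\epsilon\in(0,1)$ for every $f\in\mathscr S\cap L^2\cap L^r(v)$.

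To conclude, one passes to the limit $\epsilon\to 0$. Since $m_\epsilon\nearrow m$ pointwise, the uniform bound above transfers to $T_m$: dominated convergence on the Fourier side (once $m\in L^\infty$ is verified, which follows from the uniform $L^r(v)$ bound on $T_{m_\epsilon}$ applied to frequency-localized Schwartz data) yields $T_{m_\epsilon}(f)\to T_m(f)$ in $L^2$ and, along a subsequence, a.e.\ Fatou's lemma in $L^r(v)$ then promotes the uniform bound to $\|T_m f\|_{r,v}\leq C_0(r,v)C_1(r,v)\|f\|_{r,v}$, and density of $\mathscr S\cap L^2$ in $L^2\cap L^r(v)$ finishes the argument. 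The main technical obstacle is this limiting step --- in particular, extracting the boundedness of $m$ itself from the uniform control on its truncations while $\psi$ is assumed only to lie in $L^1$ with zero mean.
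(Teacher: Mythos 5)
Your proposal follows essentially the same route as the paper: the same factorization $T_{m^{(\epsilon)}}=E^\epsilon_{\widetilde{\bar\psi}}\circ(f\mapsto f*\psi_t)$, with Lemma 2.7 controlling the synthesis operator and the direct $L^r(v)$ bound for $g_\psi$ controlling the analysis operator, the same Fourier identification of the composition with the truncated multiplier, and the same passage $\epsilon\to 0$. The one place you diverge --- and which you yourself flag as the main obstacle --- is the verification that $m\in L^\infty$, which is needed even to define $T_m$ on $L^2$. The paper dispatches this at the outset: Stein--Weiss interpolation with change of measures between the two hypotheses, $\|g_\psi\|_{L^r(v)\to L^r(v)}\leq C_1$ and $\|g_\psi\|_{L^{r'}(v^{-r'/r})\to L^{r'}(v^{-r'/r})}\leq C_0$, gives $L^2$-boundedness of $g_\psi$, which by Plancherel is exactly $\int_0^\infty|\hat\psi(t\xi)|^2\,dt/t\in L^\infty$. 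Your suggestion of extracting $\|m^{(\epsilon)}\|_\infty$ from the uniform $M^r(v)$ bound by testing on frequency-localized data is the standard unweighted heuristic and is not immediate for $L^r(v)$; to make it rigorous you would end up interpolating $M^r(v)$ with its dual $M^{r'}(v^{-r'/r})$ down to $M^2=L^\infty$ anyway, so the paper's direct interpolation of the $g_\psi$ bounds is the cleaner way to close this step.
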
 
\begin{proof}  
We first note that an interpolation with change of measures between 
the $L^r(v)$ and $L^{r'}(v^{-r'/r})$ boundedness of $g_\psi$ 
implies the $L^2$ boundedness of $g_\psi$. 
Thus we have $m\in L^\infty(\Bbb R^n)$. 
\par 
Let $F(y,t)=f*\psi_t(y)$,  $f\in L^p_w\cap L^2$.  Then 
\begin{equation*} 
E_{\widetilde{\bar{\psi}}}^\epsilon(F)(x)=\int_\epsilon^{\epsilon^{-1}}
\int_{\Bbb R^n} \psi_t*f(y)\bar{\psi}_t(y-x)\,dy\,\frac{dt}{t} 
= \int_{\Bbb R^n} \Psi^{(\epsilon)}(x-z)f(z)\, dz, 
\end{equation*} 
where 
\begin{equation*} 
\Psi^{(\epsilon)}(x)=
\int_\epsilon^{\epsilon^{-1}}\int_{\Bbb R^n}  
\psi_t(x+y)\bar{\psi}_t(y)\,dy\,\frac{dt}{t}. 
\end{equation*} 
We see that 
\begin{equation*} 
\widehat{\Psi^{(\epsilon)}}(\xi)
=\int_\epsilon^{\epsilon^{-1}}\hat{\psi}(t\xi)\widehat{\bar{\psi}}(-t\xi)
\,\frac{dt}{t}
=\int_\epsilon^{\epsilon^{-1}}|\hat{\psi}(t\xi)|^2 \,\frac{dt}{t}. 
\end{equation*} 
Thus 
\begin{equation*} 
\int_{\Bbb R^n} \Psi^{(\epsilon)}(x-z)f(z)\, dz=T_{m^{(\epsilon)}}f(x), 
\quad m^{(\epsilon)}(\xi) 
=\int_\epsilon^{\epsilon^{-1}}|\hat{\psi}(t\xi)|^2 \,\frac{dt}{t}.   
\end{equation*} 
 From Lemma $2.7$ and the $L^r_{v}$ boundedness of $g_\psi$ 
it follows that  
\begin{equation}\label{boundm} 
\|T_{m^{(\epsilon)}}f\|_{r,v}=\|E_{\widetilde{\bar{\psi}}}^\epsilon(F)
\|_{r,v}\leq C_0(r,v)\|g_\psi(f)\|_{r,v}\leq C_0(r,v)C_1(r,v)\|f\|_{r,v}.  
\end{equation} 
Letting $\epsilon\to 0$, we see that $m\in M^r(v)$ and 
$\|m\|_{M^r(v)}$ can be evaluated by \eqref{boundm}.  
\end{proof}  
Now we can state a weighted version of Theorem C.  
\begin{theorem}\label{weightLP}
 Let $g_\psi$ be as in \eqref{lpop}. Let $w\in A_p$, 
$1<p<\infty$.
Suppose that there exists a $(w,p)$ set $U(w,p)$ such that 
$g_\psi$ fulfills the hypotheses of Proposition $2.8$ on the weighted 
boundedness for all $r$, $v=w^s$, $(r,s)\in U(w,p)$.  
Further, suppose that 
$m(\xi)=\int_0^\infty|\hat{\psi}(t\xi)|^2\, dt/t$ is continuous and 
does not vanish on $S^{n-1}$. 
Then we have 
$$\|f\|_{p,w}\leq C_{p,w} \|g_\psi(f)\|_{p,w} $$  
for $f\in L^p_w$. 
\end{theorem}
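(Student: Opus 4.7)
The plan is to deduce $\|f\|_{p,w}\leq C\|g_\psi(f)\|_{p,w}$ by combining an inversion of the Fourier multiplier $m$ with the bound $\|T_m f\|_{p,w}\leq C\|g_\psi(f)\|_{p,w}$ already implicit in the proof of Proposition~2.8. The function $m(\xi)=\int_0^\infty|\hat{\psi}(t\xi)|^2\,dt/t$ is homogeneous of degree $0$ (change variables $s=t\lambda$, $ds/s=dt/t$), hence in particular dyadically homogeneous of degree $0$; by hypothesis it is continuous on $B_0$ and nowhere zero on $S^{n-1}$, and therefore nowhere zero on $\Bbb R^n\setminus\{0\}$. For each $(r,s)\in U(w,p)$ the hypothesis supplies the $L^r(w^s)$ and $L^{r'}(w^{-sr'/r})$ boundedness of $g_\psi$ required in Proposition~2.8, yielding $m\in M^r(w^s)$, so $m\in M(U(w,p))$. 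All hypotheses of Theorem~2.5 are then satisfied, and taking $\varphi(z)=1/z$ (holomorphic on $\Bbb C\setminus\{0\}$) gives $m^{-1}\in M^p(w)$.

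To estimate $\|T_m f\|_{p,w}$ for $f\in L^2\cap L^p_w$, I follow the calculation in the proof of Proposition~2.8 verbatim. With $F(y,t)=f*\psi_t(y)$ and $m^{(\epsilon)}(\xi)=\int_\epsilon^{\epsilon^{-1}}|\hat{\psi}(t\xi)|^2\,dt/t$ for $0<\epsilon<1$, one has $T_{m^{(\epsilon)}}f=E_{\widetilde{\bar{\psi}}}^\epsilon(F)$. Lemma~2.7 applied at $(r,v)=(p,w)$, whose hypothesis is the $(r,s)=(p,1)$ case of ours, yields
$$\|T_{m^{(\epsilon)}}f\|_{p,w}\leq C_0(p,w)\|F\|_{p,w,\mathscr H}=C_0(p,w)\|g_\psi(f)\|_{p,w}$$
uniformly in $\epsilon$. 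Since $|m^{(\epsilon)}|\leq\|m\|_\infty$ and $m^{(\epsilon)}\to m$ pointwise, Plancherel and dominated convergence give $T_{m^{(\epsilon)}}f\to T_m f$ in $L^2$; passing to an almost-everywhere convergent subsequence and invoking Fatou's lemma against the measure $w(x)\,dx$ produces $\|T_m f\|_{p,w}\leq C_0(p,w)\|g_\psi(f)\|_{p,w}$.

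Finally, by Plancherel, $T_{m^{-1}}T_m f=f$ almost everywhere, so
$$\|f\|_{p,w}\leq \|m^{-1}\|_{M^p(w)}\|T_m f\|_{p,w}\leq \|m^{-1}\|_{M^p(w)}C_0(p,w)\|g_\psi(f)\|_{p,w},$$
first for $f\in L^2\cap L^p_w$ and then for all $f\in L^p_w$ by density. The substantive step is the inversion $m\mapsto m^{-1}\in M^p(w)$, which rests on the Wiener--L\'evy-type approximation scheme packaged in Theorem~2.5 (via Proposition~2.4); the rest consists of routine pairings, limiting arguments, and densities, carried out in the same spirit as the deduction of Theorem~C from Theorem~B.
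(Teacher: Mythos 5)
Your proof is correct and follows essentially the same route as the paper: Proposition 2.8 gives $m\in M(U(w,p))$, Theorem 2.5 with $\varphi(z)=1/z$ inverts $m$ in $M^p(w)$, and the estimate $\|T_mf\|_{p,w}\leq C\|g_\psi(f)\|_{p,w}$ from the proof of Proposition 2.8 (inequality \eqref{boundm}) closes the argument via $f=T_{m^{-1}}T_mf$. The extra details you supply (homogeneity of $m$, the Fatou/subsequence passage $\epsilon\to 0$, density) are all consistent with what the paper leaves implicit.
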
 
Obviously, this implies Theorem C when $w=1$. 
\begin{proof}[Proof of Theorem $2.9$] 
We first note that by Proposition $2.8$  
$m\in M(U(w,p))$.  Thus from  Theorem $2.5$ with $\varphi(z)=1/z$ 
and our assumptions, we see that $m^{-1}\in M^p(w)$. 
Since $f=T_{m^{-1}}T_m f$, $f\in L^p_w\cap L^2$, we have   
$$\|f\|_{p,w}\leq C\|T_mf\|_{p,w}. $$ 
Also, by \eqref{boundm} it follows that 
$$\|T_mf\|_{p,w}\leq C \|g_\psi(f)\|_{p,w}.$$ 
Combining results we have the desired inequality.  
\end{proof}
\par 
From Theorem \ref{weightLP} the next result follows. 
\begin{theorem}\label{weightLP2} 
Suppose the following. 
\begin{enumerate} 
\item[(1)] $\|g_\psi(f)\|_{r,v}\leq C_{r,v}\|f\|_{r,v}$ for all 
$r\in (1,\infty)$ and all $v\in A_r;$  
\item[(2)] $m(\xi)=\int_0^\infty|\hat{\psi}(t\xi)|^2\, dt/t$ is continuous and 
strictly positive on $S^{n-1}$. 
\end{enumerate}  
Then, if $f\in L^p_w$, we have 
$$\|f\|_{p,w}\leq C_{p,w} \|g_\psi(f)\|_{p,w}  $$  
 for all $p\in (1,\infty)$ and $w\in A_p$. 
\end{theorem}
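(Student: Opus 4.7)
The plan is to derive Theorem~2.10 directly from Theorem~2.9 by verifying, for every admissible pair $(p,w)$, that the hypotheses of Theorem~2.9 hold with an appropriate $(w,p)$ set.

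First, I would fix $p\in(1,\infty)$ and $w\in A_p$ and produce a suitable $(w,p)$ set $U(w,p)=(p-\sigma,p+\sigma)\times[1,1+\tau)$. This is a standard consequence of the self-improving property of the Muckenhoupt class: since $w\in A_p$, the reverse H\"older inequality together with the openness of $A_p$ in both parameters gives $\sigma,\tau>0$ so that $w^s\in A_r$ for all $(r,s)\in U(w,p)$. This is the ingredient that delivers the $(w,p)$ set required by Definition~2.1.

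Next, I would verify that hypothesis~(1) of Theorem~2.10 automatically yields the hypotheses of Proposition~2.8 at every point of this $U(w,p)$. Fix $(r,s)\in U(w,p)$ and set $v=w^s\in A_r$. Then:
\begin{itemize}
\item By hypothesis~(1) applied with $r$ and $v=w^s$, we have
$$\|g_\psi(f)\|_{r,w^s}\leq C_1(r,w^s)\|f\|_{r,w^s},$$
which is the second hypothesis of Proposition~2.8.
\item The duality property $v\in A_r\Longleftrightarrow v^{-r'/r}\in A_{r'}$ of the Muckenhoupt class, applied to $v=w^s$, yields $w^{-sr'/r}\in A_{r'}$. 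Hypothesis~(1), now invoked with the exponent $r'$ and weight $w^{-sr'/r}\in A_{r'}$, gives
$$\|g_\psi(f)\|_{r',w^{-sr'/r}}\leq C_0(r,w^s)\|f\|_{r',w^{-sr'/r}},$$
which is precisely the hypothesis of Lemma~2.7 (and hence the first hypothesis of Proposition~2.8).
\end{itemize}
Consequently Proposition~2.8 applies at every $(r,s)\in U(w,p)$, so $g_\psi$ fulfills the hypotheses of Theorem~2.9 on this $(w,p)$ set.

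Finally, the continuity and strict positivity of $m$ on $S^{n-1}$ from hypothesis~(2) is exactly the remaining assumption in Theorem~2.9, so Theorem~2.9 gives
$$\|f\|_{p,w}\leq C_{p,w}\|g_\psi(f)\|_{p,w}$$
for $f\in L^p_w$, completing the proof. The only technical point is the production of the $(w,p)$ set in the first step; everything else is a bookkeeping verification, so I do not anticipate a genuine obstacle. The \emph{strength} of hypothesis~(1) — uniform boundedness over \emph{all} $A_r$ weights — is precisely what trivializes both the dual estimate needed for Lemma~2.7 and the perturbed estimates needed to interface with the machinery of Section~2.
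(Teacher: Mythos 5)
Your proposal is correct and follows the same route as the paper, which states Theorem 2.10 as an immediate consequence of Theorem 2.9 without further argument; you have simply spelled out the routine verifications (existence of a $(w,p)$ set from the self-improving property of $A_p$, and the duality $v\in A_r\Leftrightarrow v^{-r'/r}\in A_{r'}$ feeding hypothesis (1) into Lemma 2.7 and Proposition 2.8) that the paper leaves implicit.
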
 
The following result is known (see \cite{Sa}). 
\begin{theoreme}  
Suppose that  
\begin{enumerate}
\renewcommand{\labelenumi}{(\arabic{enumi})} 
\item  $B_{\epsilon}(\psi) < \infty$  \quad  for some $\epsilon >0, $ 
where $B_{\epsilon}(\psi) = \int_{|x|>1}|\psi(x)|\,|x|^{\epsilon}\,dx; $   
\item  $C_u(\psi)<\infty$ \quad for some $u>1$ with $C_u(\psi)=
\int_{|x|<1}|\psi(x)|^u\,dx;$  
\item  $H_\psi \in L^1(\Bbb R^n),$ \quad where 
$H_{\psi}(x) = \sup_{|y|\geq |x|}|\psi(y)|$.  

\end{enumerate} 
Then 
$$\|g_\psi(f)\|_{p,w} \leq C_{p,w}\|f\|_{p,w}$$
 for all $p \in (1, \infty)$ and $w \in A_p$. 
\end{theoreme}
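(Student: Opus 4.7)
The plan is to view $g_\psi$ as the norm of a vector-valued singular integral and then apply weighted vector-valued Calder\'on-Zygmund theory. Concretely, define the $\mathscr H$-valued operator $T f(x) = (f*\psi_t(x))_{t>0}$, so that $\|Tf(x)\|_{\mathscr H}=g_\psi(f)(x)$ and $T$ has kernel $K(x) \in \mathscr H$ with $K(x)(t)=\psi_t(x)$. Weighted $L^p$ bounds for $g_\psi$ for every $p\in(1,\infty)$ and $w\in A_p$ follow once one verifies (i) $L^2$-boundedness, (ii) the H\"ormander size/regularity condition for $K$ in the $\mathscr H$-norm, and then invokes the vector-valued weighted CZ theorem (or, equivalently, Rubio de Francia extrapolation from a single weighted estimate).

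For step (i), by Plancherel it suffices to show $m(\xi)=\int_0^\infty |\hat\psi(t\xi)|^2\,dt/t$ is finite and bounded; since $m$ is homogeneous of degree $0$, only boundedness on $S^{n-1}$ is needed. The cancellation $\hat\psi(0)=0$ together with the moment bound $\int|\psi(x)||x|^\epsilon\,dx<\infty$ obtained by combining (1) and (2) (use H\"older with (2) on $|x|<1$ and (1) on $|x|\geq 1$) gives $|\hat\psi(s\xi)|\leq C s^\epsilon$ for small $s$, controlling the integral $\int_0^1\!ds/s$. For the range $s\geq 1$ one splits $\psi=\psi\chi_{\{|x|<1\}}+\psi\chi_{\{|x|\geq1\}}$; condition (2) yields $L^u$ integrability of the first piece (so Hausdorff-Young supplies some $L^{u'}$ decay of its Fourier transform), while (3) and (1) let one control the second piece through the integrable radial majorant $H_\psi$. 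Putting these together, $\int_1^\infty|\hat\psi(s\xi)|^2\,ds/s$ is bounded uniformly on $S^{n-1}$.

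For step (ii), the size estimate $\|K(x)\|_{\mathscr H}\leq C|x|^{-n}$ follows from $|\psi_t(x)|\leq t^{-n}H_\psi(x/t)$ and a direct computation of the $dt/t$ integral using $H_\psi\in L^1$. The H\"ormander condition $\int_{|x|>2|y|}\|K(x-y)-K(x)\|_{\mathscr H}\,dx\leq C$ I would verify by splitting the $t$-integration at $t\sim |y|$. In the regime $t\geq|y|$ (where $\psi_t$ varies slowly), bound the difference by integrating $H_\psi$ type tails that benefit from (3); in the regime $t<|y|$, use the triangle inequality $\|\psi_t(\cdot-y)-\psi_t(\cdot)\|_{\mathscr H_x}$ together with the decay of the radial majorant and the moment control coming from (1). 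The two regimes are then summed against $dt/t$.

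The main obstacle is step (ii): the H\"ormander-type condition for the vector-valued kernel in the absence of any pointwise smoothness of $\psi$. The mean-value theorem is unavailable, so one cannot differentiate $\psi_t$ in $x$. Instead, the integrability of $H_\psi$ must play the role of a Dini-type regularity, and the bookkeeping in combining (1), (2), and (3) across the two $t$-regimes is delicate. Once this is pushed through, step (iii) is essentially automatic: a single $A_p$ weighted bound for $g_\psi$ extrapolates to all $p\in(1,\infty)$ and $w\in A_p$ via Rubio de Francia's theorem.
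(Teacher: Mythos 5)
The paper does not reprove Theorem E; it only cites \cite{Sa}, whose argument is the continuous-parameter analogue of the proof of Theorem 3.1 given in Section 3: decompose $f*\psi_t=\sum_j D_{j+k}(f*\psi_t)$ for $t\sim 2^k$, get $L^2$ bounds with geometric decay in $j$ by Plancherel, get a weighted $L^2$ bound without decay from $\sup_{t>0}|f*\psi_t|\le CM(f)$ and the weighted Littlewood--Paley inequality, interpolate with change of measures, sum in $j$, and extrapolate. Your route through vector-valued Calder\'on--Zygmund theory is genuinely different, and it has two gaps. The smaller one is in step (i): the uniform bound $\sup_{\xi\in S^{n-1}}\int_1^\infty|\hat\psi(s\xi)|^2\,ds/s<\infty$ does not follow from Hausdorff--Young, because membership of $\widehat{\psi\chi_{\{|x|<1\}}}$ in $L^{u'}(\Bbb R^n)$ gives no control of its restriction to a ray $\{s\xi:s>0\}$ when $n\ge2$ (and even for $n=1$ the exponents do not match when $1<u<2$). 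This uniform bound is precisely where condition (2) must enter, via an oscillatory-integral estimate for $\int_R^{2R}|\hat\psi(s\xi)|^2\,ds/s$ obtained after expanding the square; it is the nontrivial part of the $L^2$ theory, not a routine consequence of Plancherel.

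The decisive gap is step (ii): the $\mathscr H$-valued H\"ormander condition is not a consequence of (1)--(3), so the strategy cannot close. The hypotheses carry no modulus-of-continuity information about $\psi$, and the $dt/t$ averaging does not smooth enough. Take $n=1$ and $\psi^{(N)}(x)=\sin(Nx)\chi_{(0,1)}(x)-c_N\chi_{(-1,0)}(x)$ with $c_N=(1-\cos N)/N$ so that $\int\psi^{(N)}=0$. Then $B_\epsilon(\psi^{(N)})=0$ and $C_u(\psi^{(N)})$, $\|H_{\psi^{(N)}}\|_1$ are bounded uniformly in $N$, so Theorem E gives operator norms for $g_{\psi^{(N)}}$ uniform in $N$; yet for $0<2y<x<cNy$ one computes, using only the range $t\in(x,2x)$ where $\psi^{(N)}_t(x-y)-\psi^{(N)}_t(x)=-2t^{-1}\cos\bigl(N(2x-y)/2t\bigr)\sin\bigl(Ny/2t\bigr)$ and both factors average away from zero, that $\bigl(\int_0^\infty|\psi^{(N)}_t(x-y)-\psi^{(N)}_t(x)|^2\,dt/t\bigr)^{1/2}\ge c\,x^{-1}$, whence $\int_{|x|>2|y|}\|K_N(x-y)-K_N(x)\|_{\mathscr H}\,dx\ge c\log N\to\infty$. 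So the kernel-regularity constant your argument would feed into the weighted CZ theorem is simply not controlled by the hypotheses, and no bookkeeping with $H_\psi$ can produce it: for $t\ge|y|$ the function $\psi_t$ need not vary slowly at scale $|y|$ unless $\psi$ itself has some smoothness (contrast Theorem 4.1(2), where such a condition is imposed as an extra hypothesis). The fix is to abandon kernel regularity altogether and argue on the frequency side as in \cite{DR} and \cite{Sa}.
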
   
By Theorem \ref{weightLP2} and Theorem E we have the following result, which 
is useful in some applications. 
\begin{corollary} Suppose that $\psi$ satisfies the conditions $(1), (2), (3)$ 
of Theorem E and the non-degeneracy condition$:$  
$\sup_{t>0}|\hat{\psi}(t\xi)|>0$  for all $\xi\neq 0$.  
Then  $\|f\|_{p,w}\simeq \|g_\psi(f)\|_{p,w}$, $f\in L^p_w$, 
  for all $p \in (1, \infty)$ and $w \in A_p$.
\end{corollary}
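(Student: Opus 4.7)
The plan is to combine Theorem E (upper bound) with Theorem 2.10 (lower bound). The direct inequality $\|g_\psi(f)\|_{p,w}\le C_{p,w}\|f\|_{p,w}$ is immediate from Theorem E, since $\psi$ is assumed to satisfy conditions $(1)$--$(3)$ there. For the reverse inequality I would apply Theorem 2.10: its first hypothesis (weighted $L^r(v)$-boundedness of $g_\psi$ for all $r\in(1,\infty)$ and $v\in A_r$) is again Theorem E, so the real task is verifying that $m(\xi)=\int_0^\infty|\hat\psi(t\xi)|^2\,dt/t$ is continuous and strictly positive on $S^{n-1}$.

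Strict positivity is the easy half: given $\xi\in S^{n-1}$, the non-degeneracy condition supplies some $t_0>0$ with $\hat\psi(t_0\xi)\neq 0$, and continuity of $\hat\psi$ (which comes from $\psi\in L^1$, guaranteed by (3)) forces $|\hat\psi(t\xi)|^2>0$ on an open $t$-neighborhood of $t_0$, so $m(\xi)>0$.

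Continuity of $m$ reduces, by its homogeneity of degree zero, to continuity on the annulus $B_0$. I would split the defining integral as $\int_0^a+\int_a^M+\int_M^\infty$. The middle piece is continuous in $\xi$ by continuity of $\hat\psi$ together with dominated convergence. For the piece near $0$, the cancellation $\hat\psi(0)=0$ combined with H\"older continuity of $\hat\psi$---condition (1) makes $(1+|x|^\epsilon)|\psi(x)|$ integrable, so $\hat\psi$ is $\delta$-H\"older with $\delta=\min(\epsilon,1)$---gives $|\hat\psi(t\xi)|^2\le C(t|\xi|)^{2\delta}$, hence $\int_0^a|\hat\psi(t\xi)|^2\,dt/t=O(a^{2\delta})$ uniformly on $B_0$.

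The main obstacle is the tail $\int_M^\infty$: one needs it to be small uniformly in $\xi\in B_0$. The function $m$ does lie in $L^\infty$ (by $L^2$-boundedness of $g_\psi$ via Plancherel, itself supplied by Theorem E), but the logarithmically divergent $dt/t$ rules out collapsing the tail with an $L^\infty$ bound on $\hat\psi$ and Riemann--Lebesgue alone. My plan is to extract quantitative decay of $\hat\psi$ at infinity by splitting $\psi=\psi\chi_{|x|<1}+\psi\chi_{|x|\ge 1}$: Hausdorff--Young on the first piece (in $L^u$ for some $u>1$ by (2)) and condition (1) on the second should yield enough averaged decay to make the tail uniformly small on $B_0$. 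Once continuity of $m$ is secured, Theorem 2.10 delivers the reverse inequality and the proof closes.
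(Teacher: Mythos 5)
Your overall architecture coincides with the paper's own proof: the direct inequality is exactly Theorem E, the reverse inequality comes from Theorem 2.10, and the whole problem reduces to showing that $m(\xi)=\int_0^\infty|\hat\psi(t\xi)|^2\,dt/t$ is continuous and strictly positive on $S^{n-1}$. Your positivity argument, your treatment of $\int_0^a$ (using $\hat\psi(0)=0$ and the H\"older bound $|\hat\psi(\xi)|\le C|\xi|^{\min(\epsilon,1)}$ coming from condition (1)), and the dominated-convergence argument for $\int_a^M$ are all sound. The paper, however, disposes of the continuity question in one line by citing \cite{Sa} for the uniform convergence of $\int_\epsilon^{\epsilon^{-1}}|\hat\psi(t\xi)|^2\,dt/t$ on $S^{n-1}$, whereas you attempt to prove it from scratch; it is precisely at the tail $\int_M^\infty$, the step you yourself flag as the main obstacle, that your sketch has a genuine gap.

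Hausdorff--Young applied to $g=\psi\chi_{\{|x|<1\}}\in L^u$ gives $\hat g\in L^{u'}(\Bbb R^n)$, but that is integrability over all of $\Bbb R^n$ and controls nothing along the single ray $\{t\xi:t>M\}$: for $n\ge2$ a function can belong to $L^{u'}(\Bbb R^n)$ and be identically $1$ on a fixed ray through the origin. Likewise, condition (1) ($B_\epsilon(\psi)<\infty$) encodes decay of $\psi$ at infinity and hence regularity of $\hat\psi$ near the origin; it yields no decay of $\widehat{\psi\chi_{\{|x|\ge1\}}}$ at infinity. What actually closes the tail is an averaged decay estimate of Duoandikoetxea--Rubio de Francia type along rays,
\[
\int_R^{2R}|\hat\psi(t\xi)|^2\,\frac{dt}{t}\le\iint|\psi(x)|\,|\psi(y)|\min\bigl(1,(R|\langle\xi,x-y\rangle|)^{-1}\bigr)\,dx\,dy\le C R^{-\delta},\qquad |\xi|=1,
\]
where the bilinear integral is bounded uniformly in $\xi$ using condition (2) (H\"older against the local integrability of $|\langle\xi,z\rangle|^{-\delta u'}$) and condition (3) (which gives $|\psi(x)|\le c\|H_\psi\|_1|x|^{-n}$); summing over dyadic blocks then makes $\int_M^\infty$ of order $M^{-\delta}$ uniformly on $S^{n-1}$. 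This is the substance of the lemma in \cite{Sa} that the paper's proof invokes. So either prove an estimate of this kind or cite it; as written, the two tools you propose for the tail do not deliver the required uniform smallness, and this is the one step separating your sketch from a complete proof.
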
 
\begin{proof} Let 
$m(\xi)=\int_0^\infty|\hat{\psi}(t\xi)|^2\, dt/t$. Then by our assumption 
$m(\xi)\neq 0$ for $\xi\neq 0$. Thus 
by Theorem E and Theorem \ref{weightLP2}, it suffices to show that $m$ is 
continuous on $S^{n-1}$. 
From \cite{Sa} it can be seen that 
$\int_\epsilon^{\epsilon^{-1}}|\hat{\psi}(t\xi)|^2\, dt/t \to m(\xi)$ uniformly on $S^{n-1}$ as $\epsilon\to 0$.  
Since $\int_\epsilon^{\epsilon^{-1}}|\hat{\psi}(t\xi)|^2\, dt/t $ is continuous  on $S^{n-1}$ for each fixed $\epsilon>0$, the continuity of $m$ follows.    
\end{proof}

\begin{remark}  
Let $1<p<\infty$, $w\in A_p$. Suppose that $g_\psi$ is bounded on $L^p_w$ 
and $\psi$ is a radial function with $\int_0^\infty |\hat{\psi}(t\xi)|^2\, 
dt/t=1$ for every $\xi\neq 0$. Then we have $\|f\|_{p,w}\leq 
C\|g_\psi(f)\|_{p,w}$ if $g_\psi$ is also bounded on $L^{p'}_{w^{-p'/p}}$.  
This is well-known and follows from the proofs of Lemma $2.7$ and Proposition 
$2.8$.  Also, this can be proved by applying arguments of 
\cite[Chap. V, $5.6$ (b)]{GR}.   
\end{remark}

\section{Discrete parameter Littlewood-Paley functions} 
  
Let $\psi \in L^1(\Bbb R^n)$ with \eqref{cancell} and  let 
$\Delta_\psi$ be as in \eqref{dlpop}. 
We first give a criterion for the boundedness of $\Delta_\psi$ on $L^p_w$ 
analogous to Theorem E.  
\begin{theorem} Let $B_{\epsilon}(\psi)$, $H_\psi$ be as in Theorem E.  
  Suppose that
\begin{enumerate}   
\item[(1)] $B_{\epsilon}(\psi) < \infty$ \quad   
for some $\epsilon>0;$  
\item[(2)] $|\hat{\psi}(\xi)|\leq C|\xi|^{-\delta}$ \quad for all 
$\xi\in \Bbb R^n\setminus\{0\}$ with some $\delta>0;$  
\item[(3)] $H_\psi\in L^1(\Bbb R^n)$. 
\end{enumerate}
Let $1<p<\infty$. Then 
$$\|\Delta_\psi(f)\|_{p,w} \leq C_{p,w}\|f\|_{p,w}$$
 for every $w\in A_p$. 
\end{theorem}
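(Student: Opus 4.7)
The plan is to realize $\Delta_\psi$ as an $\ell^2$-valued convolution operator with kernel $K(x) = \{\psi_{2^k}(x)\}_{k\in\Bbb Z}$ and to invoke the theory of weighted vector-valued Calder\'on--Zygmund operators (see \cite{GR}, Chap.~V). Two ingredients must be established: unweighted $L^2\to L^2(\ell^2)$ boundedness, and the H\"ormander-type regularity
\[\sup_{y\neq 0}\int_{|x|>2|y|}\|K(x-y)-K(x)\|_{\ell^2}\, dx<\infty.\]
From these, the conclusion for all $p\in(1,\infty)$ and $w\in A_p$ follows by the standard weighted vector-valued CZ machinery.

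For the $L^2$-bound, Plancherel reduces the task to showing that $m_\Delta(\xi):=\sum_k|\hat\psi(2^k\xi)|^2$ is bounded. Dyadic periodicity $m_\Delta(2\xi)=m_\Delta(\xi)$ restricts the question to the unit annulus. From the cancellation \eqref{cancell} and condition~(1) one derives $|\hat\psi(\xi)|\le C|\xi|^{\epsilon}$ via the pointwise inequality $|e^{-2\pi i\langle x,\xi\rangle}-1|\le C_\epsilon(|x||\xi|)^{\epsilon}$ combined with $\int|\psi(x)||x|^\epsilon\, dx<\infty$ (which follows from (1) together with $\psi\in L^1$, guaranteed by (3)); condition~(2) gives $|\hat\psi(\xi)|\le C|\xi|^{-\delta}$ for large $|\xi|$. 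Hence $|\hat\psi(2^k\xi)|^2\le C\min(4^{k\epsilon},4^{-k\delta})$ on the unit annulus, so $m_\Delta$ is bounded by a geometrically convergent series.

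For the H\"ormander condition, fix $y$, let $k_0$ be the integer with $2^{k_0}\sim|y|$, and split the $\ell^2$-sum at $k_0$. For small scales $k\le k_0$, dominate $\ell^2$ by $\ell^1$ and estimate each term using condition~(1):
\[\int_{|x|>2|y|}|\psi_{2^k}(x)|\, dx=\int_{|u|>2^{-k+1}|y|}|\psi(u)|\, du \le B_\epsilon(\psi)(2^{-k+1}|y|)^{-\epsilon};\]
summing the geometric series in $k\le k_0$ and handling the translated term analogously (since $|x|>2|y|$ forces $|x-y|>|y|$) gives a bounded total. For large scales $k>k_0$, pass to the Fourier side: decompose $\{|x|>2|y|\}$ into annuli $\{|x|\sim 2^j|y|\}$ with $j\ge 1$, apply Cauchy--Schwarz on each annulus, and reduce to a Plancherel-type estimate involving $\int|\hat\psi(2^k\xi)|^2\sin^2(\pi\langle y,\xi\rangle)\, d\xi$, suitably truncated since $\psi$ need not lie in $L^2$. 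The high-frequency decay from~(2) and the low-frequency vanishing of $\sin^2$ jointly deliver decay in $k-k_0$ and in $j$; condition~(3), which majorizes $|\psi|$ by a radial $L^1$-decreasing function, provides the pointwise control needed to sum the annular contributions.

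The main obstacle is this large-scale part of the H\"ormander estimate: with no smoothness hypothesis on $\psi$, there is no pointwise $L^1$-translation-continuity rate, and one is forced to the Fourier side, where condition~(2) must be combined with the tail estimates from (1) and~(3) and summed carefully across annuli and scales. Once this is verified, the weighted vector-valued Calder\'on--Zygmund theorem (see \cite{GR}, Chap.~V, and \cite{CF, KW}) delivers the desired bound $\|\Delta_\psi(f)\|_{p,w}\le C_{p,w}\|f\|_{p,w}$ for every $p\in(1,\infty)$ and $w\in A_p$.
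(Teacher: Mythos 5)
Your overall strategy---realizing $\Delta_\psi$ as a convolution with the $\ell^2$-valued kernel $K(x)=\{\psi_{2^k}(x)\}_{k\in\Bbb Z}$ and invoking weighted vector-valued Calder\'on--Zygmund theory---is a genuinely different route from the paper's, but it hinges on the H\"ormander condition $\sup_{y\neq 0}\int_{|x|>2|y|}\|K(x-y)-K(x)\|_{\ell^2}\,dx<\infty$, and this is exactly the step you do not establish. Your small-scale part ($k\le k_0$) is fine, but the large-scale part has concrete problems. First, the Plancherel identity $\int|\psi_{2^k}(x-y)-\psi_{2^k}(x)|^2\,dx=4\int|\hat\psi(2^k\xi)|^2\sin^2(\pi\langle y,\xi\rangle)\,d\xi$ need not be finite: the hypotheses do not put $\psi$ in $L^2$ (conditions (1) and (3) only give $|\psi(x)|\le C|x|^{-n}$ plus a weighted tail bound), and condition (2) with an arbitrary $\delta>0$ does not make $\int_{|\xi|\ge 1}|\xi|^{-2\delta}\,d\xi$ converge unless $\delta>n/2$. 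The ``suitable truncation'' you invoke is not specified and is where the real work lies. Second, even granting an $L^2$ bound, Cauchy--Schwarz on the annulus $|x|\sim 2^j|y|$ costs a factor $(2^j|y|)^{n/2}$ that must be beaten by decay of the $L^2$ norm of the localized difference in both $j$ and $k-k_0$; conditions (1)--(3) give tail decay only once $2^j|y|\gg 2^k$, so for the roughly $k-k_0$ values of $j$ with $2^j|y|\lesssim 2^k$ you have no mechanism producing summability. Since $\psi$ carries no smoothness or bounded-variation hypothesis, there is no $L^1$-modulus-of-continuity estimate to fall back on, and it is not even clear that the vector-valued H\"ormander condition holds under these hypotheses. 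You flag this as ``the main obstacle'' and then defer it; that is the gap.

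The paper avoids kernel regularity entirely, following Duoandikoetxea--Rubio de Francia: it writes $f*\psi_{2^k}=\sum_j D_{j+k}(f*\psi_{2^k})$ with a smooth dyadic partition, sets $L_j(f)=\bigl(\sum_k|D_{j+k}(f*\psi_{2^k})|^2\bigr)^{1/2}$ so that $\Delta_\psi(f)\le\sum_j L_j(f)$, proves $\|L_j(f)\|_2\le C2^{-\epsilon|j|/2}\|f\|_2$ by Plancherel (using exactly your two-sided Fourier estimate $|\hat\psi(\xi)|\le C\min(|\xi|^{\epsilon'},|\xi|^{-\delta})$), proves $\|L_j(f)\|_{2,w}\le C\|f\|_{2,w}$ for $w\in A_2$ with no decay using $\sup_{t>0}|f*\psi_t|\le CM(f)$ (from condition (3)) together with the weighted Littlewood--Paley inequality, then interpolates with change of measures to recover geometric decay in the weighted norm, sums in $j$, and finishes with Rubio de Francia extrapolation. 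If you want to salvage your write-up, the cleanest fix is to replace the H\"ormander-condition step by this interpolation-plus-extrapolation scheme; your Fourier-side computation of $m_\Delta$ already supplies the unweighted ingredient.
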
 
We assume the pointwise estimate of $\hat{\psi}$ in $(2)$, which is not 
required in Theorem E. 
\begin{proof}[Proof of Theorem $3.1$]  
We apply methods of \cite{DR}.  Define 
$$\widehat{D_j(f)}(\xi) = \Psi(2^j\xi)\hat{f}(\xi) \qquad \text{for \quad $j \in \Bbb Z$},$$ 
where $\Psi \in C^{\infty}$ satisfies that $\supp(\Psi) \subset 
\{1/2\leq |\xi|\leq 2\}$ and  
$$\sum_{j=-\infty}^\infty \Psi(2^j\xi) = 1 \qquad \text{for \quad $\xi \neq 0$.}$$ 
 We write  
 $$f* \psi_{2^k}(x) = \sum_{j=-\infty}^\infty D_{j+k}(f* \psi_{2^k})(x),  
 $$   
 where we initially assume that $f\in \mathscr S(\Bbb R^n)$. 
 Let 
 $$L_j(f)(x) = \left(\sum_{k=-\infty}^\infty
 \left|D_{j+k}(f* \psi_{2^k})(x)\right|^2 \right)^{1/2}.$$ 
 Then 
 $$\Delta_\psi(f)(x) \leq  \sum_{j\in \Bbb Z}L_j(f)(x).$$
\par 
We note that the condition $(1)$ and \eqref{cancell} imply that 
$|\hat{\psi}(\xi)|\leq C|\xi|^{\epsilon'}$, $\epsilon'=\min(1,\epsilon)$. So, 
since the Fourier transform of $D_{j}(f* \psi_{2^k})$ is supported in 
 $E_j =\{2^{-1-j}\leq |\xi| \leq 2^{1-j}\}$,  the Plancherel theorem and 
the conditions $(1)$, $(2)$  imply  that 
\begin{align} 
 \|L_j(f)\|_2^2 &= \sum_{k\in \Bbb Z}\int_{\Bbb R^n}
 \left|D_{j+k}\left(f* \psi_{2^k} \right)(x)\right|^2 \, dx
 \\
 &\leq  \sum_{k\in \Bbb Z} C\int_{E_{j+k}}
 \min\left(|2^k\xi|^{\epsilon}, |2^k\xi|^{-\epsilon}\right) 
 \left|\hat{f}(\xi)\right|^2 \,d\xi     \notag
 \\
 &\leq C 2^{-\epsilon |j|} \sum_{k\in \Bbb Z}
 \int_{E_{j+k}}\left|\hat{f}(\xi)\right|^2 \,d\xi                     
         \notag 
 \\
 &\leq C 2^{-\epsilon |j|}\|f\|_2^2     \notag 
 \end{align}
for some $\epsilon>0$, 
 where to get the last inequality we also use the fact that the sets $E_j$ 
 are  finitely overlapping.  
 \par 
By the condition $(3)$, we see that $\sup_{t>0}|f*\psi_t|\leq 
CM(f)$. Thus, if $w\in A_2$, by the Hardy-Littlewood maximal theorem and 
  the Littlewood-Paley inequality for $L^2_w$  we see that   
 \begin{align} 
 \|L_j(f)\|_{2,w}^2 &= \sum_{k\in \Bbb Z}\int_{\Bbb R^n}
 \left|D_{j+k}(f)* \psi_{2^k} (x)\right|^2 \, w(x)\, dx 
 \\
 &\leq \sum_{k\in \Bbb Z} C\int_{\Bbb R^n}\left|M(D_{j+k}(f))(x)\right|^2w(x)
\, dx             \notag 
 \\
 &\leq \sum_{k\in \Bbb Z} C\int_{\Bbb R^n}\left|D_{j+k}(f)(x)\right|^2w(x)\, dx
 \notag 
 \\
 &\leq C\|f\|_{2,w}^2.   \notag 
 \end{align}
 \par 
 Interpolation with change of measures between $(3.1)$ and $(3.2)$ implies 
 that 
 $$\|L_j(f)\|_{2,w^u} \leq C2^{-\epsilon(1-u)|j|/2} \|f\|_{2,w^u}$$
 for $u\in (0, 1)$.  Choosing $u$, close to 1, so that $w^{1/u} \in A_2$, 
  we have  
  $$\|L_j(f)\|_{2,w} \leq C2^{-\epsilon(1-u)|j|/2} \|f\|_{2,w}, $$
  and hence 
  $$\|\Delta_\psi(f)\|_{2,w} \leq  \sum_{j\in \Bbb Z}\|L_j(f)\|_{2,w} \leq 
  C\|f\|_{2,w}.$$ 
  Thus the conclusion follows from the extrapolation theorem of 
  Rubio de Francia \cite{Ru}. 
\end{proof}
 
\begin{remark}  
Under the hypotheses of Theorem $3.1$, $g_\psi$ is also bounded on $L^p_w$ for 
all $p\in (1,\infty)$ and $w\in A_p$. This can be seen from the proof of 
Theorem E in \cite{Sa}. 
\end{remark}
\par   
Let $\mathscr K$ be the Hilbert space of 
functions $v(k)$ on $\Bbb Z$ such that 
$$\|v\|_{\mathscr K}
=\left(\sum_{k=-\infty}^\infty |v(k)|^2\right)^{1/2}<\infty. $$   
We define spaces $L^p_{w, \mathscr K}$, similarly to $L^p_{w, \mathscr H}$. 
Also, we use notation similar to the one used when $E_\psi^\epsilon(h)$ is 
considered.   
We define 
\begin{equation}  
L_\psi^N(l)(x)=\sum_{k\in \Bbb Z}\int_{\Bbb R^n}\psi_{2^k}(x-y)l_{(N)}(y,k)
\, dy,  
\end{equation} 
where $l\in L^2_\mathscr K$, $l_{(N)}(x,k)=l(x,k)\chi_{[-N,N]}(k)$ for a 
positive integer $N$. 
\par 
Then, we have the following result.   
\begin{lemma}  
Suppose that $1<r<\infty$, $v\in A_r$ and that  
$$\|\Delta_\psi(f)\|_{r',v^{-r'/r}}\leq C_0(r,v)\|f\|_{r',v^{-r'/r}}.   
$$ 
Then, we have 
 $\sup_{N\geq 1}\|L_{\widetilde{\bar{\psi}}}^N(l)\|_{r, v}\leq 
C_0(r,v)\|l\|_{r, v, \mathscr K}$, that is, 
$$\sup_{N\geq 1}\left(\int_{\Bbb R^n} |L_{\widetilde{\bar{\psi}}}^N
(l)(x)|^r v(x)\, dx \right)^{1/r}\leq 
C_0(r,v)\left(\int_{\Bbb R^n}\left(\sum_{k=-\infty}^\infty
|l(x,k)|^2\right)^{r/2}v(x)\, dx\right)^{1/r}   $$  
for $l\in L^r_{v, \mathscr K}\cap L^2_\mathscr K$.   
\end{lemma}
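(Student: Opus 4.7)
The plan is to prove this lemma by a straightforward duality argument that mirrors the proof of Lemma 2.7, replacing the integral over $(0,\infty)$ with respect to $dt/t$ by the sum over $\Bbb Z$. Since $l_{(N)}$ is supported in $\Bbb Z \cap [-N,N]$, all sums are finite and Fubini applications are legitimate without any subtlety.

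First, for a test function $f \in \mathscr S(\Bbb R^n)$, I would pair $L_{\widetilde{\bar\psi}}^N(l)$ against $f$ and swap the order of summation and integration:
\begin{align*}
\left|\int_{\Bbb R^n} L_{\widetilde{\bar\psi}}^N(l)(x) f(x)\, dx\right|
&= \left|\sum_{|k|\le N}\int_{\Bbb R^n}\int_{\Bbb R^n} \widetilde{\bar\psi}_{2^k}(x-y)\, l(y,k)\, dy\, f(x)\, dx\right| \\
&= \left|\sum_{|k|\le N}\int_{\Bbb R^n} \bar\psi_{2^k}*f(y)\, l(y,k)\, dy\right|.
\end{align*}
The second equality uses the identity $\int \widetilde{\bar\psi}_{2^k}(x-y) f(x)\, dx = \bar\psi_{2^k}*f(y)$, exactly as in Lemma 2.7.

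Next, applying the Cauchy--Schwarz inequality pointwise in $y$ against the $\mathscr K$ inner product yields
$$\left|\int_{\Bbb R^n} L_{\widetilde{\bar\psi}}^N(l)(x) f(x)\, dx\right|\le \int_{\Bbb R^n} \Delta_\psi(\bar f)(y)\, \|l^y\|_{\mathscr K}\, dy,$$
since $\left(\sum_k |\bar\psi_{2^k}*f(y)|^2\right)^{1/2} = \Delta_\psi(\bar f)(y)$. Then H\"older's inequality with dual exponents $r', r$ against the weight decomposition $1 = v^{-1/r}\cdot v^{1/r}$ gives
$$\int_{\Bbb R^n} \Delta_\psi(\bar f)(y)\, \|l^y\|_{\mathscr K}\, dy \le \|\Delta_\psi(\bar f)\|_{r',v^{-r'/r}}\, \|l\|_{r,v,\mathscr K}.$$
Invoking the hypothesized weighted bound for $\Delta_\psi$ on $L^{r'}(v^{-r'/r})$ (noting that $\|\bar f\|_{r',v^{-r'/r}}=\|f\|_{r',v^{-r'/r}}$), we obtain
$$\left|\int_{\Bbb R^n} L_{\widetilde{\bar\psi}}^N(l)(x) f(x)\, dx\right|\le C_0(r,v)\, \|f\|_{r',v^{-r'/r}}\, \|l\|_{r,v,\mathscr K}.$$

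Finally, since $(L^r_v)^* = L^{r'}_{v^{-r'/r}}$ and $\mathscr S(\Bbb R^n)$ is dense in $L^{r'}_{v^{-r'/r}}$, taking the supremum over $f\in\mathscr S$ with $\|f\|_{r',v^{-r'/r}}\le 1$ yields the stated bound on $\|L_{\widetilde{\bar\psi}}^N(l)\|_{r,v}$, uniformly in $N$. There is no real obstacle here: the truncation to $|k|\le N$ makes every manipulation finite, the hypothesis is applied off the shelf, and the only thing to verify carefully is the bookkeeping around the tilde-and-conjugate convention that converts a convolution against $\widetilde{\bar\psi}_{2^k}$ into $\bar\psi_{2^k}*f$ evaluated at $y$.
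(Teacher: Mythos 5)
Your proof is correct and is exactly the argument the paper intends: the paper omits the proof of Lemma 3.3, stating that it "can be proved similarly to Lemma 2.7," and your duality--Cauchy--Schwarz--H\"older chain is precisely the discrete-parameter transcription of that proof, with the truncation to $|k|\le N$ playing the role of the cutoff to $(\epsilon,\epsilon^{-1})$.
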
  
This is used to prove the following. 
\begin{proposition}      
We assume that $\Delta_\psi$ satisfies   
the hypothesis of Lemma $3.3$ with  $r\in (1,\infty)$ and $v\in A_r$.  
Further, we assume that    
$$\|\Delta_\psi(f)\|_{r,v}\leq C_1(r,v)\|f\|_{r,v}.  $$ 
Set 
$$ m(\xi)= \sum_{k=-\infty}^\infty |\hat{\psi}(2^k\xi)|^2. $$  
Then, we have 
 $\|m\|_{M^r(v)}\leq C_0(r,v)C_1(r,v)$. 
\end{proposition}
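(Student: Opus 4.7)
The plan is to mimic the proof of Proposition 2.8 verbatim, replacing the continuous parameter integrals $\int_0^\infty \cdots\, dt/t$ by dyadic sums $\sum_{k\in \mathbb Z}$, and replacing $E^\epsilon_\psi$ by $L^N_\psi$. The first step is exactly as in Proposition 2.8: interpolation with change of measures between the $L^r(v)$ and $L^{r'}(v^{-r'/r})$ boundedness of $\Delta_\psi$ yields its $L^2$ boundedness, which by the Plancherel theorem forces $m(\xi) = \sum_{k\in\mathbb Z}|\hat\psi(2^k\xi)|^2 \in L^\infty(\mathbb R^n)$.

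Next, for $f \in L^2 \cap L^r_v$, I would introduce the vector-valued function $F(y,k) = f * \psi_{2^k}(y)$ and verify it lies in $L^2_\mathscr K \cap L^r_{v,\mathscr K}$: the $L^2_\mathscr K$ norm is controlled by $\|m\|_\infty^{1/2}\|f\|_2$ via Plancherel, while the $L^r_{v,\mathscr K}$ norm is exactly $\|\Delta_\psi(f)\|_{r,v}\leq C_1(r,v)\|f\|_{r,v}$. Then the key computation is to identify
\[
L^N_{\widetilde{\bar\psi}}(F)(x) = \sum_{|k|\leq N}\widetilde{\bar\psi}_{2^k} * (f*\psi_{2^k})(x),
\]
whose Fourier transform equals $\hat f(\xi)\sum_{|k|\leq N}\overline{\hat\psi(2^k\xi)}\hat\psi(2^k\xi) = m^{(N)}(\xi)\hat f(\xi)$, where $m^{(N)}(\xi) = \sum_{|k|\leq N}|\hat\psi(2^k\xi)|^2$. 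Thus $L^N_{\widetilde{\bar\psi}}(F) = T_{m^{(N)}}f$. Applying Lemma 3.3 to $l = F$ gives
\[
\|T_{m^{(N)}}f\|_{r,v} = \|L^N_{\widetilde{\bar\psi}}(F)\|_{r,v}\leq C_0(r,v)\|F\|_{r,v,\mathscr K} = C_0(r,v)\|\Delta_\psi(f)\|_{r,v}\leq C_0(r,v)C_1(r,v)\|f\|_{r,v},
\]
uniformly in $N$.

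Finally, I would pass to the limit $N\to\infty$. Since $0 \leq m^{(N)} \leq m \leq \|m\|_\infty$ pointwise and $m^{(N)} \to m$ pointwise, dominated convergence on the Fourier side shows $T_{m^{(N)}}f \to T_m f$ in $L^2$ for $f\in L^2\cap L^r_v$; extracting an a.e.\ convergent subsequence and applying Fatou's lemma in $L^r_v$ yields $\|T_m f\|_{r,v}\leq C_0(r,v)C_1(r,v)\|f\|_{r,v}$. Since $L^2\cap L^r_v$ is dense in $L^r_v$, this proves $m\in M^r(v)$ with the stated norm bound.

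The routine parts are the Fourier-side identification of $L^N_{\widetilde{\bar\psi}}(F)$ with a multiplier operator and the verification that $F$ lies in the required spaces; the only subtle point is the passage $N\to\infty$, where one must combine $L^2$ convergence (from $m\in L^\infty$) with the uniform weighted bound via Fatou. I expect no serious obstacle, as the structural parallel with Proposition 2.8 is essentially tight.
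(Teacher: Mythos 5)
Your proposal is correct and follows exactly the route the paper intends: the paper omits the proof of Proposition 3.4, stating it is analogous to that of Proposition 2.8, and your argument is precisely that analogue, with $F(y,k)=f*\psi_{2^k}(y)$, the identification $L^N_{\widetilde{\bar\psi}}(F)=T_{m^{(N)}}f$, and Lemma 3.3 playing the roles of $F(y,t)$, $E^\epsilon_{\widetilde{\bar\psi}}$, and Lemma 2.7. Your limiting argument via $L^2$ convergence plus Fatou is a correct (and slightly more detailed) version of the paper's "letting $\epsilon\to 0$" step.
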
 
Proposition 3.4 and Theorem $2.5$ are applied to prove the following.   
\begin{theorem}\label{dweightLP}
We assume that $w\in A_p$, $1<p<\infty$.  Suppose  that 
there exists a $(w,p)$ set $U(w,p)$ such that 
 $\Delta_\psi$ fulfills the hypotheses of Proposition $3.4$ 
on the weighted boundedness for all $r$, $v=w^s$, $(r,s)\in U(w,p)$. 
Then if the function 
$m(\xi)= \sum_{k=-\infty}^\infty |\hat{\psi}(2^k\xi)|^2$ 
 is continuous and does not vanish on $B_0$,  we have 
$$\|f\|_{p,w}\leq C_{p,w} \|\Delta_\psi(f)\|_{p,w} $$   
for $f\in L^p_w$. 
\end{theorem}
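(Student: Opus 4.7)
The plan is to mirror the proof of Theorem 2.9, replacing the continuous-parameter ingredients by their discrete-parameter analogues built up in Section 3. The first thing to check is that $m(\xi)=\sum_{k\in\mathbb{Z}}|\hat{\psi}(2^k\xi)|^2$ is dyadically homogeneous of degree $0$: this is immediate, since substituting $2^j\xi$ for $\xi$ and reindexing the sum over $\mathbb{Z}$ returns $m(\xi)$. This explains why the hypothesis on $m$ here concerns continuity and non-vanishing on the closed annulus $B_0=\{1\leq|\xi|\leq 2\}$ (rather than on $S^{n-1}$ as in Theorem 2.9): under dyadic homogeneity, $B_0$ is the natural fundamental domain, and the identifications at $|\xi|=1$ and $|\xi|=2$ are automatic.

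Next, I would apply Proposition 3.4 with $(r,v)=(r,w^s)$ for every $(r,s)\in U(w,p)$; the two boundedness hypotheses assumed in Theorem 3.5 for all such $(r,s)$ are precisely what Proposition 3.4 demands, so we conclude $m\in M^r(w^s)$ for every $(r,s)\in U(w,p)$, i.e.\ $m\in M(U(w,p))$. Combined with the continuity and non-vanishing of $m$ on $B_0$ together with dyadic homogeneity, all the hypotheses of Theorem 2.5 are fulfilled. Applying Theorem 2.5 with $\varphi(z)=1/z$, which is holomorphic on $\mathbb{C}\setminus\{0\}$, yields $m^{-1}\in M^p(w)$.

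To finish, I need the discrete analogue of the intermediate inequality \eqref{boundm}. This is extracted from the proof of Proposition 3.4 (modelled on Proposition 2.8): for $f\in L^p_w\cap L^2$, setting $F(y,k)=f\ast\psi_{2^k}(y)$, a direct computation using $\widehat{\psi_{2^k}}(\xi)=\hat{\psi}(2^k\xi)$ gives
\[
L_{\widetilde{\bar{\psi}}}^{N}(F)(x)=T_{m_N}f(x),\qquad m_N(\xi)=\sum_{|k|\leq N}|\hat{\psi}(2^k\xi)|^2,
\]
and Lemma 3.3 bounds the left-hand side by $C_0(p,w)\,\|\Delta_\psi(f)\|_{p,w}$, uniformly in $N$. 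Letting $N\to\infty$ yields $\|T_mf\|_{p,w}\leq C\|\Delta_\psi(f)\|_{p,w}$. Since $T_{m^{-1}}T_mf=f$ in $L^p_w\cap L^2$, we obtain
\[
\|f\|_{p,w}\leq \|m^{-1}\|_{M^p(w)}\|T_mf\|_{p,w}\leq C_{p,w}\|\Delta_\psi(f)\|_{p,w},
\]
and density extends this to all $f\in L^p_w$.

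The only genuine obstacle is step three: one must verify that the bookkeeping in Proposition 3.4's proof actually produces the operator-norm comparison $\|T_mf\|_{p,w}\lesssim\|\Delta_\psi(f)\|_{p,w}$, not merely the membership $m\in M^p(w)$. This amounts to observing that the constants in Lemma 3.3 are independent of the truncation parameter $N$, so that passing $N\to\infty$ (or invoking a limiting/weak-$*$ argument together with $m_N\to m$ pointwise and uniform boundedness of $\|T_{m_N}\|_{L^p_w\to L^p_w}$) preserves the bound.
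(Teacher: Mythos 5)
Your proposal is correct and follows exactly the route the paper intends: the paper omits the proof of Theorem 3.5, stating only that it is analogous to that of Theorem 2.9, and your argument is precisely that analogue (Proposition 3.4 gives $m\in M(U(w,p))$, Theorem 2.5 with $\varphi(z)=1/z$ gives $m^{-1}\in M^p(w)$, and the truncated identity $L_{\widetilde{\bar{\psi}}}^{N}(F)=T_{m_N}f$ together with Lemma 3.3 gives $\|T_mf\|_{p,w}\leq C\|\Delta_\psi(f)\|_{p,w}$, whence $f=T_{m^{-1}}T_mf$ finishes the proof). Your added checks — dyadic homogeneity of $m$, the role of $B_0$ as fundamental domain, and the uniformity in $N$ of the constants in Lemma 3.3 — are exactly the points one must verify to carry the analogy through.
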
 
We note that $m$ is dyadically homogeneous of degree $0$ and 
that, under the assumptions of Theorem \ref{dweightLP},  $m\in M(U(w,p))$. 
\par 
Theorem \ref{dweightLP} implies the next result. 
\begin{theorem} 
We assume the following. 
\begin{enumerate} 
\item[(1)] $\|\Delta_\psi(f)\|_{r,v}\leq C_{r,v}\|f\|_{r,v}$ for all 
$r\in (1,\infty)$ and all $v\in A_r;$    
\item[(2)] $m$ is continuous and strictly positive on $B_0$, where $m$ is 
defined as in Theorem $3.5$. 
\end{enumerate}  
Let $w\in A_p$, $1<p<\infty$. Then we have 
$$\|f\|_{p,w}\leq C_{p,w} \|\Delta_\psi(f)\|_{p,w}, \quad f\in L^p_w.     $$  
\end{theorem}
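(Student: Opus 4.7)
The plan is to deduce Theorem 3.6 as a direct application of Theorem 3.5, in the same spirit as Theorem 2.10 was derived from Theorem 2.9. The only nontrivial task is to exhibit a $(w,p)$ set $U(w,p)$ on which the hypotheses of Proposition 3.4 hold, so that Theorem 3.5 applies.

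First I would use the openness property of Muckenhoupt classes: given $w\in A_p$ with $1<p<\infty$, there exist $\sigma\in(0,p-1)$ and $\tau>0$ such that $w^s\in A_r$ whenever $(r,s)\in (p-\sigma,p+\sigma)\times[1,1+\tau)$. This yields a $(w,p)$ set $U(w,p)$ in the sense of Definition 2.1. For any $(r,s)\in U(w,p)$, hypothesis (1) gives $\|\Delta_\psi(f)\|_{r,w^s}\le C_{r,w^s}\|f\|_{r,w^s}$. Moreover, $w^s\in A_r$ is equivalent to $(w^s)^{-r'/r}\in A_{r'}$, so hypothesis (1) also supplies $\|\Delta_\psi(f)\|_{r',(w^s)^{-r'/r}}\le C\|f\|_{r',(w^s)^{-r'/r}}$. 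Hence the two-sided boundedness required in Proposition 3.4 is verified for every $(r,s)\in U(w,p)$.

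Next, hypothesis (2) states precisely that $m(\xi)=\sum_{k\in\Bbb Z}|\hat\psi(2^k\xi)|^2$ is continuous and does not vanish on $B_0=\{1\le |\xi|\le 2\}$. This is exactly the remaining hypothesis of Theorem 3.5. Therefore Theorem 3.5 applies and yields
\[
\|f\|_{p,w}\le C_{p,w}\|\Delta_\psi(f)\|_{p,w}, \qquad f\in L^p_w,
\]
which is the claim.

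There is no real obstacle here: the whole content of Theorem 3.6 is to package Theorem 3.5 in a convenient form in which one no longer has to check a $(w,p)$ set by hand, and the only small bookkeeping step is the observation that global $A_r$-boundedness of $\Delta_\psi$ and the openness of $A_p$ together automatically produce such a set. The underlying nontrivial machinery, namely the passage through Proposition 3.4 and the invertibility Theorem 2.5 applied to $\varphi(z)=1/z$, is already encapsulated in Theorem 3.5.
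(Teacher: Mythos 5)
Your proposal is correct and follows exactly the route the paper intends: the paper derives Theorem 3.6 from Theorem 3.5 just as Theorem 2.10 follows from Theorem 2.9, namely by using the openness/self-improvement of the $A_p$ condition to produce a $(w,p)$ set $U(w,p)$, and then verifying the two-sided boundedness hypotheses of Proposition 3.4 for each $(r,s)\in U(w,p)$ via assumption (1) together with the duality $w^s\in A_r \Leftrightarrow (w^s)^{-r'/r}\in A_{r'}$. No gaps; this matches the paper's (omitted but indicated) argument.
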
 

Lemma 3.3, Proposition 3.4, Theorem 3.5 and Theorem 3.6  are analogous 
to and can be proved similarly to Lemma 2.7, Proposition 2.8, Theorem 2.9 
and Theorem 2.10,  respectively. We omit their proofs. 
\par 
We also have an analogue of Corollary 2.11.  
\begin{corollary} Suppose that $\psi$ satisfies the conditions $(1), (2), (3)$ 
of Theorem $3.1$ and the non-degeneracy condition$:$ 
 $\sup_{k\in \Bbb Z}|\hat{\psi}(2^k\xi)|>0$ for all $\xi\neq 0$.  
Then  $\|f\|_{p,w}\simeq \|\Delta_\psi(f)\|_{p,w}$, $f\in L^p_w$, 
  for all $p \in (1, \infty)$ and $w \in A_p$.
\end{corollary}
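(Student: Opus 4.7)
The plan is to mirror the structure of the proof of Corollary 2.11, using the discrete-parameter tools built up in this section. First, I would invoke Theorem 3.1 to obtain the upper estimate $\|\Delta_\psi(f)\|_{p,w}\leq C_{p,w}\|f\|_{p,w}$ for every $p\in(1,\infty)$ and $w\in A_p$; this simultaneously verifies hypothesis $(1)$ of Theorem 3.6 for every $r\in(1,\infty)$ and every $v\in A_r$. The task then reduces to the reverse inequality via Theorem 3.6, for which I must check that the multiplier $m(\xi)=\sum_{k\in\Bbb Z}|\hat{\psi}(2^k\xi)|^2$ is continuous and strictly positive on $B_0=\{\xi: 1\leq|\xi|\leq 2\}$.

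Strict positivity on $B_0$ is immediate: for each $\xi\in B_0$ the non-degeneracy condition supplies some $k_0\in \Bbb Z$ with $\hat{\psi}(2^{k_0}\xi)\neq 0$, so $m(\xi)\geq|\hat{\psi}(2^{k_0}\xi)|^2>0$.

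For continuity of $m$ on $B_0$, the plan is to show that the defining series converges uniformly on $B_0$; since $\psi\in L^1$ makes $\hat{\psi}$ continuous, each finite partial sum is continuous, and uniform convergence will then transfer continuity to $m$. The uniform convergence rests on two decay estimates for the terms with $|k|$ large. For $k\geq 0$ one has $|2^k\xi|\geq 1$ on $B_0$, so condition $(2)$ gives $|\hat{\psi}(2^k\xi)|^2\leq C\, 2^{-2k\delta}$. For $k<0$ one has $|2^k\xi|\leq 2^{k+1}\leq 1$, and here the cancellation $\int\psi=0$ together with condition $(1)$ produces $|\hat{\psi}(\eta)|\leq C|\eta|^{\epsilon'}$ with $\epsilon'=\min(1,\epsilon)$ (exactly as invoked in the proof of Theorem 3.1), hence $|\hat{\psi}(2^k\xi)|^2\leq C\, 2^{2k\epsilon'}$. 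Both tails are dominated uniformly on $B_0$ by summable geometric series, yielding uniform convergence. Theorem 3.6 then delivers $\|f\|_{p,w}\leq C_{p,w}\|\Delta_\psi(f)\|_{p,w}$, which combined with the upper bound from Theorem 3.1 gives the asserted equivalence.

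I do not anticipate a significant obstacle: the argument is structurally parallel to that of Corollary 2.11, with one minor point of difference. In the continuous-parameter case the uniform convergence of the truncated integrals on $S^{n-1}$ was cited from \cite{Sa}; here, because the parameter is discrete, the pointwise Fourier decay furnished by condition $(2)$ is exactly what controls the tail $k\to+\infty$, which is why Theorem 3.1 explicitly imposes the pointwise bound $|\hat{\psi}(\xi)|\leq C|\xi|^{-\delta}$ that is absent from Theorem E.
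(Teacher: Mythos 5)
Your proposal is correct and follows essentially the same route as the paper: upper bound from Theorem 3.1, strict positivity of $m$ from the non-degeneracy condition, and continuity of $m$ on $B_0$ via uniform convergence of the partial sums using the two-sided decay $|\hat{\psi}(\xi)|\leq C\min(|\xi|^{\epsilon'},|\xi|^{-\delta})$ coming from conditions (1) (with the cancellation) and (2), then Theorem 3.6 for the reverse inequality. The paper states the uniform convergence more tersely, but your spelled-out geometric tail estimates are exactly the intended justification.
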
 
\begin{proof} By the assumption 
$m(\xi)=\sum_{k=-\infty}^\infty|\hat{\psi}(2^k\xi)|^2>0$ for $\xi\neq 0$. 
Therefore,     
by Theorem $3.6$, to prove a reverse inequality of the conclusion of 
Theorem $3.1$ it suffices to show that $m$ is continuous on 
$B_0$.  From the estimate $|\hat{\psi}(\xi)|\leq C\min(|\xi|^\epsilon, 
|\xi|^{-\epsilon}) $ for some $\epsilon>0$, which follows from  
(1) and (2) of Theorem $3.1$, it can be seen that 
$\sum_{k=-N}^N|\hat{\psi}(2^k\xi)|^2 \to m(\xi)$ uniformly on 
$B_0$ as $N \to \infty$.  
Since $\sum_{k=-N}^N|\hat{\psi}(2^k\xi)|^2$ is continuous  on 
$B_0$ for each fixed $N$, we can conclude that
 $m$ is also continuous on $B_0$.   
This completes the proof.  
\end{proof}

\section{Littlewood-Paley operators on $H^p$, $0<p\leq 1$, 
with $p$ close to $1$}  
   
Let $0<p\leq 1$. 
We consider the Hardy space of functions on $\Bbb R^n$ with values in 
$\mathscr H$, which is denoted by 
$H^p_{\mathscr H}(\Bbb R^n)$.  
Choose $\varphi\in \mathscr S(\Bbb R^n)$ with $\int \varphi(x)\, dx=1$. 
Let $h \in L^2_{\mathscr H}(\Bbb R^n)$.  
We say  $h \in H^p_{\mathscr H}(\Bbb R^n)$ if 
$\|h\|_{H^p_{\mathscr H}}= \|h^*\|_{L^p}<\infty$  
with 
$$h^*(x)=\sup_{s>0}\left(\int_0^\infty |\varphi_s*h^{t}(x)|^2\, \frac{dt}{t}
\right)^{1/2}, $$where we write $h^{t}(x)=h(x,t)$. 
Similarly, we consider the Hardy space  $H^p_{\mathscr K}(\Bbb R^n)$ of  
 functions $l$ in $ L^2_{\mathscr K}(\Bbb R^n)$ 
such that 
$\|l\|_{H^p_{\mathscr K}}^p= \|l^*\|_{L^p}<\infty$,   
where 
$$l^*(x)=\sup_{s>0}\left(\sum_{j=-\infty}^\infty |\varphi_s*l^{j}(x)|^2
\right)^{1/2}, \quad l^{j}(x)=l(x,j).   $$  
\par 
 Let $\psi\in L^1(\Bbb R^n)$ with \eqref{cancell} and let 
 $E_\psi^\epsilon(h)$ be defined as in $(2.4)$. 
\begin{theorem} 
 Suppose that 
\begin{enumerate} 
\item[(1)]  $\int_0^\infty |\hat{\psi}(t\xi)|^2 \, dt/t \leq C$ with a constant  $C;$   
\item[(2)]  there exists $\tau\in (0,1]$ such that if $|x|>2|y|$, 
$$\left(\int_0^\infty |\psi_t(x-y)-\psi_t(x)|^2 \, \frac{dt}{t}\right)^{1/2} 
\leq C\frac{|y|^\tau}{|x|^{n+\tau}}. $$
\end{enumerate}
Then 
$$\sup_{\epsilon\in (0,1)}
\|E_\psi^\epsilon(h)\|_{H^p}\leq C\|h\|_{H^p_{\mathscr H}} $$  
if  $n/(n+\tau) <p\leq 1$, where $H^p=H^p(\Bbb R^n)$ is the 
ordinary Hardy space on $\Bbb R^n$. 
\end{theorem}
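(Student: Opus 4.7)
The plan is to reduce the estimate to its action on atoms of $H^p_{\mathscr H}$ and verify that each image is an $H^p$-molecule, then assemble via $p$-subadditivity. First I would check that $E_\psi^\epsilon$ is bounded from $L^2_{\mathscr H}$ to $L^2$ uniformly in $\epsilon$: applying the Fourier transform in $x$ gives $\widehat{E_\psi^\epsilon(h)}(\xi) = \int_\epsilon^{1/\epsilon} \hat{\psi}(t\xi)\hat{h}^t(\xi)\, dt/t$, and Cauchy--Schwarz together with hypothesis (1) yields $\|E_\psi^\epsilon(h)\|_2 \leq C\|h\|_{L^2_{\mathscr H}}$.

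Next, since $n/(n+\tau) < p \leq 1$ with $\tau \leq 1$ forces $[n(1/p-1)] = 0$, the appropriate atoms for $H^p_{\mathscr H}$ need only the zeroth-order cancellation. I would invoke the vector-valued atomic decomposition: write $h = \sum_j \lambda_j a_j$ with $\sum_j |\lambda_j|^p \leq C\|h\|_{H^p_{\mathscr H}}^p$, where each $a_j$ is an $\mathscr H$-valued function supported in a ball $B_j = B(x_j, r_j)$ with $\|a_j\|_{L^2_{\mathscr H}} \leq |B_j|^{1/2-1/p}$ and $\int a_j(y,t)\, dy = 0$.

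For an atom $a$ centered in $B=B(x_0,r)$ I would prove $\|E_\psi^\epsilon(a)\|_{H^p} \leq C$ uniformly by exhibiting $E_\psi^\epsilon(a)$ as an $H^p$-molecule. Zero mean follows from $\int\psi_t(x-y)\,dx = 0$ in (1.1). On $2B$ the $L^2$ bound combined with H\"older gives $\|E_\psi^\epsilon(a)\|_{L^p(2B)} \leq C$. For $x\notin 2B$, the cancellation of $a$ allows the substitution
\[
E_\psi^\epsilon(a)(x) = \int_B\int_\epsilon^{1/\epsilon}\bigl[\psi_t(x-y)-\psi_t(x-x_0)\bigr]\, a(y,t)\,\frac{dt}{t}\,dy,
\]
and applying Cauchy--Schwarz in $(y,t)$ together with hypothesis (2) yields
\[
|E_\psi^\epsilon(a)(x)| \leq C|B|^{1-1/p}\frac{r^{\tau}}{|x-x_0|^{n+\tau}}.
\]
Raising to the $p$-th power and integrating over $|x-x_0|>2r$ converges precisely because $p > n/(n+\tau)$, producing a bound uniform in $\epsilon$ and in the atom. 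These size/cancellation data are exactly those of an $H^p$-molecule, so $\|E_\psi^\epsilon(a)\|_{H^p}\leq C$.

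Finally, using the $p$-subadditivity $\|\cdot\|_{H^p}^p$, one obtains $\|E_\psi^\epsilon(h)\|_{H^p}^p \leq \sum_j |\lambda_j|^p \|E_\psi^\epsilon(a_j)\|_{H^p}^p \leq C\|h\|_{H^p_{\mathscr H}}^p$. The main obstacle will be a clean setup of the vector-valued atomic/molecular theory for $H^p_{\mathscr H}$ (the correct moment count, and the justification that $E_\psi^\epsilon$ commutes with the atomic decomposition), which is handled by restricting first to finite atomic sums --- where the $L^2$ continuity from Step 1 lets us pass $E_\psi^\epsilon$ through the sum --- and then extending by density, the uniform $\epsilon$-bound being preserved throughout.
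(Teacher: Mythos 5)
Your proposal is correct, and the overall skeleton (atomic decomposition of $H^p_{\mathscr H}$, an $L^2$ estimate near the supporting cube, the H\"ormander-type condition (2) plus the cancellation of the atom away from it, and the observation that $p>n/(n+\tau)$ is exactly what makes the tail integral converge) matches the paper. The one genuine structural difference is in how you convert the estimates on $E_\psi^\epsilon(a)$ into membership in $H^p$: you bound $E_\psi^\epsilon(a)(x)$ itself pointwise off $2B$, note the vanishing integral, and invoke the molecular characterization of $H^p$ (with decay exponent $n+\tau$, i.e.\ $\varepsilon=\tau/n>1/p-1$, which is again precisely the hypothesis $p>n/(n+\tau)$). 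The paper instead works directly with the grand maximal function $\sup_{s>0}|\varphi_s*E_\psi^\epsilon(a)|$, which forces it to prove a separate lemma (Lemma 4.4) showing that the smoothed kernels $\Psi_{s,t}=\varphi_s*\psi_t$ satisfy the same regularity estimate as $\psi_t$ uniformly in $s$ --- an argument that splits into $s<|x|/4$ (where condition (2) is used) and $s\geq|x|/4$ (where condition (1) is used via the Fourier transform). Your route trades that lemma for a citation of the atoms-to-molecules machinery; the paper's route is self-contained modulo the maximal-function definition of $H^p$ and extends more transparently to the companion results (Theorems 4.2, 4.5, 4.6), where the same Lemma 4.4 is reused. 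Two minor points to tidy up in your write-up: you use $(p,2)$-atoms where the paper uses $(p,\infty)$-atoms (both are fine, but the Cauchy--Schwarz step $\int_B\|a^y\|_{\mathscr H}\,dy\leq|B|^{1/2}\|a\|_{L^2_{\mathscr H}}$ should be stated when deriving the off-ball bound), and the zero-mean claim $\int E_\psi^\epsilon(a)\,dx=0$ needs the brief Fubini justification that your own decay estimate provides.
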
 
Recall that we say $f\in \mathscr S'(\Bbb R^n)$ (the space of tempered 
distributions) belongs to 
$H^p(\Bbb R^n)$ if 
$\|f\|_{H^p}=\|f^*\|_p<\infty$ , where $f^*(x)=\sup_{t>0}|\varphi_t*f(x)|$, 
with $\varphi\in \mathscr S(\Bbb R^n)$ satisfying $\int \varphi(x)\, dx=1$ 
(see \cite{FeS}).  
\par We also have a similar result for $L_\psi^N(l)$. 
\begin{theorem} 
Let $L_\psi^N(l)$ be defined as in $(3.3)$. 
We assume the following conditions$:$    
\begin{enumerate} 
\item[(1)]  $\sum_{k=-\infty}^\infty |\hat{\psi}(2^k\xi)|^2 \leq C$ with a constant  $C;$   
\item[(2)]   if $|x|>2|y|$, we have 
$$\left(\sum_{k=-\infty}^\infty |\psi_{2^k}(x-y)-\psi_{2^k}(x)|^2 
\right)^{1/2} 
\leq C\frac{|y|^\tau}{|x|^{n+\tau}} $$ 
with some $\tau\in (0,1]$.  
\end{enumerate}  
Then 
$$\sup_{N\geq 1}\|L_\psi^N(l)\|_{H^p}\leq C\|l\|_{H^p_{\mathscr K}}  
\quad \text{for $n/(n+\tau) <p\leq 1$.}     $$  
\end{theorem}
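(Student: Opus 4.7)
The plan is to follow the atomic-decomposition approach of Theorem~4.7 mutatis mutandis, replacing the integral over $t\in(0,\infty)$ by the sum over $k\in\Bbb Z$. Decompose $l=\sum_j\lambda_j a_j$, where each $a_j$ is an $(H^p_{\mathscr K},L^2_{\mathscr K})$-atom supported in a ball $B_j=B(x_j,r_j)$, with $\|a_j\|_{L^2_{\mathscr K}}\leq|B_j|^{1/2-1/p}$ and $\int a_j(y,k)\,dy=0$ for each $k\in\Bbb Z$, and $\sum_j|\lambda_j|^p\leq C\|l\|_{H^p_{\mathscr K}}^p$. By the $p$-subadditivity of $\|\cdot\|_{H^p}^p$, the theorem reduces to a uniform bound $\|L_\psi^N(a)\|_{H^p}\leq C$ for a single such atom $a$, with $C$ independent of $N$ and of $a$.

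Fix an atom $a$ supported in $B=B(x_0,r)$ and split $\int(L_\psi^N(a))^*(x)^p\,dx$ into contributions from $\{|x-x_0|\leq 2r\}$ and $\{|x-x_0|>2r\}$. For the near part, Cauchy--Schwarz in $k$ combined with Plancherel and hypothesis (1) yields $\|L_\psi^N(a)\|_2\leq C\|a\|_{L^2_{\mathscr K}}\leq C|B|^{1/2-1/p}$; the $L^2$-boundedness of the maximal operator $f\mapsto f^*$, followed by H\"{o}lder's inequality on the ball of radius $2r$, then gives a bounded contribution. For the far part, use the vanishing moments to write
\[
\varphi_s*L_\psi^N(a)(x)=\sum_{k=-N}^N\int_B\bigl[(\varphi_s*\psi_{2^k})(x-y)-(\varphi_s*\psi_{2^k})(x-x_0)\bigr]a(y,k)\,dy,
\]
and then apply Cauchy--Schwarz in $k$ and in $y$. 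Provided hypothesis (2) passes to $\varphi_s*\psi$ uniformly in $s>0$, this produces the pointwise estimate $(L_\psi^N(a))^*(x)\leq Cr^{n+\tau-n/p}|x-x_0|^{-(n+\tau)}$ in the tail, whose $L^p$-norm converges precisely when $p>n/(n+\tau)$.

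The main obstacle is the uniform-in-$s$ transfer of (2) from $\psi$ to $\varphi_s*\psi$. By Minkowski's integral inequality on the $\ell^2_k$-norm, the task reduces to bounding $\int|\varphi_s(z)|\,R(x-z,y)\,dz$ by $C|y|^\tau|x|^{-(n+\tau)}$ whenever $|x|>2|y|$, where $R(w,y)=\bigl(\sum_k|\psi_{2^k}(w-y)-\psi_{2^k}(w)|^2\bigr)^{1/2}$. One splits the $z$-integral at $|z|=|x|/4$: in the region $|z|\leq|x|/4$ one has $|x-z|\sim|x|>2|y|$ and (2) applies directly, while in the complement $|z|$ is comparable to $|x|$ and the rapid decay of $\varphi_s$ compensates, using the trivial size bound on $R$ obtained from (2) by telescoping. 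Apart from this technical step, the proof is a verbatim translation of the argument for Theorem~4.7.
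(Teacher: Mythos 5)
Your overall architecture---atomic decomposition in $H^p_{\mathscr K}$, an $L^2$/H\"{o}lder estimate near the supporting ball, and a moment-condition/regularity estimate in the tail---is exactly the paper's (the paper proves the continuous-parameter Theorem 4.1 this way and states that Theorem 4.2 follows by the same argument; the template is Theorem 4.1, not 4.7). The genuine gap is in the one step you yourself flag as ``the main obstacle'': the uniform-in-$s$ transfer of condition $(2)$ to $\Psi_{s,k}=\varphi_s*\psi_{2^k}$. Splitting the $z$-integral at $|z|=|x|/4$ does not work in the outer region: there $z$ is \emph{not} comparable to $x$ (it ranges over all of $\{|z|>|x|/4\}$, including $z$ near $x$ and near $x-y$), and in that range the quantity $R(x-z,y)=\bigl(\sum_k|\psi_{2^k}(x-z-y)-\psi_{2^k}(x-z)|^2\bigr)^{1/2}$ is simply not controlled by the hypotheses: condition $(2)$ bounds $R(w,y)$ only for $|w|>2|y|$, and no ``trivial size bound by telescoping'' for the complementary range follows from $\psi\in L^1$ together with $(2)$; $R(\cdot,y)$ need not even be locally integrable near $w=0$, so the rapid decay of $\varphi_s$ cannot ``compensate.''

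The paper's Lemma 4.4 avoids this by taking $\varphi$ compactly supported in the unit ball and splitting on the scale $s$ rather than on $z$: for $s<|x|/4$ the convolution only sees $|z|<s<|x|/4$ and condition $(2)$ applies directly, while for $s\geq|x|/4$ it abandons the space-side estimate entirely, writes $\Psi_{s,k}(x-y)-\Psi_{s,k}(x)$ as a Fourier integral, and bounds the $\ell^2_k$-norm by
$C|y|\int|\hat\varphi(s\xi)|\,|\xi|\,d\xi\leq C|y|s^{-n-1}\leq C|y|^{\tau}|x|^{-n-\tau}$,
using hypothesis $(1)$. The fact that your sketch of the transfer lemma never invokes hypothesis $(1)$ is the telltale sign that the large-$s$ regime is not actually covered. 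A secondary, easily fixable point: with a cutoff at $|z|\leq|x|/4$ you only get $|x-z|\geq\tfrac{3}{4}|x|$, so the transferred estimate should be stated for $|x|>3|y|$ (as in the paper's Lemma 4.4), with the tail region in the atom estimate enlarged accordingly. The rest of your argument (Plancherel plus Cauchy--Schwarz in $k$ for the $L^2$ bound, the moment-condition subtraction, and the convergence of the tail integral precisely for $p>n/(n+\tau)$) matches the paper's proof.
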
  

To prove these theorems we apply atomic decompositions. 

Let $a$ be a $(p, \infty)$ atom in  $H^p_{\mathscr H}(\Bbb R^n)$. Thus 
\begin{enumerate} 
\item[(i)] $\left(\int_0^\infty |a(x,t)|^2\, dt/t \right)^{1/2} \leq 
|Q|^{-1/p}$, where $Q$ is a cube in $\Bbb R^n$ with sides parallel to the 
coordinate axes; 
\item[(ii)] $\sup(a(\cdot, t))\subset Q$ uniformly in $t>0$, where $Q$ is the
 same as in $(i);$  
\item[(iii)] $\int_{\Bbb R^n} a(x,t)x^\gamma \, dx=0$ for all $t>0$ and 
$|\gamma|\leq [n(1/p -1)]$, where 
$x^\gamma=x_1^{\gamma_1}\dots x_n^{\gamma_n}$ and  $[a]$ denotes the largest 
integer not exceeding $a$.  
\end{enumerate}  
To prove Theorem $4.1$ we use the following.  
\begin{lemma}
Let $h\in L^2_{\mathscr H}(\Bbb R^n)$. Suppose that
 $h\in H^p_{\mathscr H}(\Bbb R^n)$. 
Then there exist a sequence $\{a_k\}$ of 
$(p,\infty)$ atoms in $H^p_{\mathscr H}(\Bbb R^n)$ and a sequence 
$\{\lambda_k\}$ of positive numbers such that 
$h=\sum_{k=1}^\infty\lambda_k a_k$ in $H^p_{\mathscr H}(\Bbb R^n)$ and in 
$L^2_{\mathscr H}(\Bbb R^n)$,  and $\sum_{k=1}^\infty\lambda_k^p\leq 
C\|h\|_{H^p_{\mathscr H}}^p$, 
where $C$ is a constant independent of $h$.  
\end{lemma}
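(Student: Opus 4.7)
The plan is to carry out the standard Coifman--Latter--Fefferman--Stein atomic decomposition of $H^p$ in the $\mathscr H$-valued setting, exploiting the fact that $\mathscr H$ is itself a Hilbert space so that all $L^2$-based arguments (Plancherel, orthogonal projection onto polynomials, Calder\'on reproducing formula) transfer verbatim with scalars replaced by $\mathscr H$-valued functions and $|\,\cdot\,|$ replaced by $\|\,\cdot\,\|_{\mathscr H}$.

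First I would verify that $h^*(x)=\sup_{s>0}\|\varphi_s*h^{(\cdot)}(x)\|_{\mathscr H}$ admits a grand maximal function equivalent, namely that for a sufficiently large $N$,
\[
\mathcal N h(x)=\sup_{\phi\in\mathscr F_N}\sup_{s>0}\bigl\|\phi_s*h^{(\cdot)}(x)\bigr\|_{\mathscr H}
\]
satisfies $\|\mathcal N h\|_p\lesssim\|h^*\|_p$ (the reverse being trivial). This is the vector-valued analogue of the Fefferman--Stein characterization; its proof is the standard one (tangential versus nontangential maximal functions, subharmonicity estimates applied to $\|\,\cdot\,\|_{\mathscr H}$-valued Poisson extensions), since every step is pointwise in $\xi\in\mathscr H$ or uses only $L^2$-type inequalities.

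Next, for each $k\in\Bbb Z$ set $\Omega_k=\{x:\mathcal N h(x)>2^k\}$, which is open with $|\Omega_k|\le C2^{-kp}\|h\|_{H^p_{\mathscr H}}^p$. Perform a Whitney decomposition $\Omega_k=\bigcup_j Q_{k,j}$ with dyadic cubes of bounded overlap when slightly dilated, and choose a smooth partition of unity $\{\eta_{k,j}\}$ subordinate to $\{Q_{k,j}^*\}$ with $\sum_j\eta_{k,j}=\chi_{\Omega_k}$. Using a Calder\'on reproducing formula $h=\int_0^\infty h*\Theta_s\,ds/s$ applied fiberwise in $t$, one splits
\[
h(x,t)=\sum_k\bigl(g_{k+1}(x,t)-g_k(x,t)\bigr),
\]
where $g_k$ is a good part bounded by $C2^k$ in $\mathscr H$-norm at each point, and the differences $g_{k+1}-g_k$ localize on $\Omega_k$. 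Writing $g_{k+1}-g_k=\sum_j b_{k,j}$ with each $b_{k,j}$ supported in $cQ_{k,j}^*$ and subtracting its polynomial projection of degree $\le[n(1/p-1)]$ fiber by fiber in $t$ (this projection is an $\mathscr H$-valued linear operation, well defined because $b_{k,j}(\cdot,t)$ is $L^2$ in $x$ for a.e.\ $t$), we obtain blocks $\tilde b_{k,j}$ satisfying conditions (ii) and (iii) of the atom definition. The estimate $\|\tilde b_{k,j}(\cdot,t)\|_{\mathscr H}\le C2^k$ a.e., together with integration over $Q_{k,j}^*$, yields $(\int_0^\infty\|\tilde b_{k,j}(x,t)\|_{\mathscr H}^2\,dt/t)^{1/2}\le C2^k$; so with $\lambda_{k,j}=C2^k|Q_{k,j}^*|^{1/p}$ and $a_{k,j}=\lambda_{k,j}^{-1}\tilde b_{k,j}$ we obtain genuine $(p,\infty)$ atoms.

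Finally, the coefficient estimate is
\[
\sum_{k,j}\lambda_{k,j}^p\le C\sum_k 2^{kp}\sum_j|Q_{k,j}^*|\le C\sum_k 2^{kp}|\Omega_k|\le C\int_{\Bbb R^n}(\mathcal N h)^p\,dx\le C\|h\|_{H^p_{\mathscr H}}^p,
\]
and the convergence of $\sum_{k,j}\lambda_{k,j}a_{k,j}$ to $h$ in $H^p_{\mathscr H}$ follows from the atomic quasi-norm bound applied to tail sums, while $L^2_{\mathscr H}$ convergence comes from the identity $h=\sum_k(g_{k+1}-g_k)$ on the dense level-set $\{x:\mathcal N h(x)<\infty\}$ together with the uniform $\mathscr H$-norm bound on $g_k$ and the assumption $h\in L^2_{\mathscr H}$. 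The main obstacle is the atomic construction in step three: one must produce a Whitney-type cube decomposition that respects the $\mathscr H$-valued structure and verify the moment cancellation $\int a_{k,j}(x,t)x^\gamma\,dx=0$ for \emph{every} fixed $t>0$; this is handled by performing the polynomial subtraction $t$-pointwise and observing that because the subtraction is a bounded linear projection on $L^2(Q_{k,j}^*)$, the resulting function still lies in $L^2_{\mathscr H}$ with the required size bound, and then invoking Fubini to pass between size estimates in $\mathscr H$-norm and in $L^2(Q_{k,j}^*;\mathscr H)$.
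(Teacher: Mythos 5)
Your proposal is correct and follows exactly the route the paper intends: the paper gives no argument of its own for this lemma, merely citing the scalar-valued atomic decomposition in Garc\'{\i}a-Cuerva--Rubio de Francia and Str\"omberg--Torchinsky and asserting that ``the vector valued case can be proved similarly,'' and your sketch is precisely that standard Calder\'on--Zygmund/grand-maximal-function construction carried out fiberwise in $t$ with $|\cdot|$ replaced by $\|\cdot\|_{\mathscr H}$. The only point worth flagging is that the simultaneous $L^2_{\mathscr H}$ convergence (which the paper's statement requires and which you address only briefly at the end) deserves the same care as in the scalar $H^p\cap L^2$ refinement, but your outline of it is the right one.
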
 
See \cite{GR, ST} for the case of $H^p(\Bbb R^n)$; the vector valued case 
 can be proved similarly.  
We apply Lemma $4.3$ for $p\in (n/(n+1), 1]$.  
 We also need the following. 

\begin{lemma} 
Let $\varphi$ be a non-negative $C^\infty$ function on $\Bbb R^n$ supported 
in $\{|x|<1\}$ which satisfies $\int \varphi(x)\, dx=1$. Suppose 
that $\psi\in L^1(\Bbb R^n)$ satisfies the conditions $(1), (2)$ of 
Theorem $4.1$. 
Let $\Psi_{s,t}=\varphi_s*\psi_t$, $s, t>0$. Then, if $|x|>3|y|$, we have 
$$\left(\int_0^\infty |\Psi_{s,t}(x-y)-\Psi_{s,t}(x)|^2 
\, \frac{dt}{t}\right)^{1/2} \leq C\frac{|y|^\tau}{|x|^{n+\tau}} $$  
with a constant $C$ independent of $s>0$. 
\end{lemma}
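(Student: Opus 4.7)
The plan is to split the argument into two regimes according to the relative size of $s$ and $|x|$. When $s\le |x|/6$, I would apply Minkowski's integral inequality to
\[
\Psi_{s,t}(x-y)-\Psi_{s,t}(x)=\int\varphi_s(z)\bigl[\psi_t((x-z)-y)-\psi_t(x-z)\bigr]\,dz
\]
to bound the $L^2(dt/t)$-norm of the left side by $\int\varphi_s(z)g_y(x-z)\,dz$, where $g_y(u)=\bigl(\int_0^\infty|\psi_t(u-y)-\psi_t(u)|^2\,dt/t\bigr)^{1/2}$. Since $|z|<s\le|x|/6$ together with $|x|>3|y|$ force $|x-z|\ge 5|x|/6>2|y|$, hypothesis $(2)$ of Theorem~4.1 applies throughout $\supp(\varphi_s)$ and gives $g_y(x-z)\le C|y|^\tau/|x-z|^{n+\tau}\le C'|y|^\tau/|x|^{n+\tau}$; integrating against the unit-mass kernel $\varphi_s$ closes this case.

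When $s>|x|/6$, the preceding estimate breaks down because $\supp(\varphi_s)$ reaches into the region where hypothesis $(2)$ is silent. In this regime I would transfer the difference onto the smooth factor via the fundamental theorem of calculus, writing
\[
\Psi_{s,t}(x-y)-\Psi_{s,t}(x)=-\int_0^1 y\cdot(\psi_t*\nabla\varphi_s)(x-\theta y)\,d\theta,
\]
so that Minkowski in $t$ bounds the target by $|y|\int_0^1 G(x-\theta y)\,d\theta$, where $G(u)=\bigl(\int_0^\infty|(\psi_t*\nabla\varphi_s)(u)|^2\,dt/t\bigr)^{1/2}$. The crucial step is a uniform pointwise bound $G(u)\le Cs^{-n-1}$, which I plan to obtain from the Fourier identity $(\psi_t*\nabla\varphi_s)(u)=\int\hat\psi(t\xi)\widehat{\nabla\varphi_s}(\xi)e^{2\pi i\langle u,\xi\rangle}\,d\xi$ as follows: a weighted Cauchy--Schwarz in $\xi$ (with weight $|\widehat{\nabla\varphi_s}|$), followed by integration against $dt/t$ and an appeal to hypothesis $(1)$, yields
\[
G(u)^2\le C\Bigl(\int|\widehat{\nabla\varphi_s}(\xi)|\,d\xi\Bigr)^2\le Cs^{-2n-2},
\]
the last bound coming from the scaling $\widehat{\nabla\varphi_s}(\xi)=2\pi i\xi\,\hat\varphi(s\xi)$ and the rapid decay of $\hat\varphi$. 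Since $|y|<2s$ and $|x|<6s$ in this regime, $|y|\cdot Cs^{-n-1}\le C'|y|^\tau/|x|^{n+\tau}$ then follows by elementary manipulation.

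The main obstacle is precisely the uniform estimate on $G$. A naive Minkowski attack on $\psi_t*\nabla\varphi_s$ would leave one needing a pointwise bound on the shift function $g_v(u)=\bigl(\int_0^\infty|\psi_t(u-v)-\psi_t(u)|^2\,dt/t\bigr)^{1/2}$ in its own ``bad region'' $|u|\le 2|v|$, where $g_v$ is not even locally bounded in general (its global $L^2$-norm already diverges because $|e^{-2\pi i\langle v,\xi\rangle}-1|^2$ does not decay in $\xi$). Keeping the estimate on the Fourier side and using only the uniform bound $m(\xi)\le C$ from $(1)$ together with the scaling $\int|\widehat{\nabla\varphi_s}|\asymp s^{-n-1}$ sidesteps this difficulty and delivers the desired control.
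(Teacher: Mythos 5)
Your proposal is correct and follows essentially the same route as the paper: the same dichotomy between $s$ small and $s$ large relative to $|x|$, with Minkowski's inequality plus hypothesis $(2)$ in the first regime, and a passage to the Fourier side plus hypothesis $(1)$ yielding the bound $C|y|s^{-n-1}$ in the second. The only (immaterial) difference is in the large-$s$ case: the paper extracts the factor $2\pi|y||\xi|$ from $|e^{-2\pi i\langle y,\xi\rangle}-1|$ and applies Minkowski in $\xi$, whereas you put the derivative on $\varphi_s$ via the fundamental theorem of calculus and use a weighted Cauchy--Schwarz; both reduce to $\int |\xi|\,|\hat\varphi(s\xi)|\,d\xi \le Cs^{-n-1}$ and the same elementary comparison $|y|\,s^{-n-1}\le C|y|^\tau/|x|^{n+\tau}$.
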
 
\begin{proof} 
We note that 
$$\Psi_{s,t}(x-y)-\Psi_{s,t}(x)=\int_{|z|<s}
(\psi_t(x-y-z)-\psi_t(x-z)) \varphi_s(z)\, dz.  $$ 
Let $0<s<|x|/4$. Then, if $|x|>3|y|$ and $|z|<s$, we have $|x-z|\geq 
(3/4)|x|\geq 2|y|$.  Thus by the Minkowski inequality and $(2)$ of Theorem $4.1$ we see that 
\begin{align} 
&\left(\int_0^\infty |\Psi_{s,t}(x-y)-\Psi_{s,t}(x)|^2 
\, \frac{dt}{t}\right)^{1/2} 
\\ 
&\leq \int_{|z|<s}\left(\int_0^\infty |\psi_t(x-y-z)-\psi_t(x-z)|^2 
\, \frac{dt}{t}\right)^{1/2}\varphi_s(z)\, dz        \notag 
\\ 
&\leq C\int_{|z|<s} \frac{|y|^\tau}{|x-z|^{n+\tau}}\varphi_s(z)\, dz \notag 
\\
&\leq C\|\varphi\|_1\frac{|y|^\tau}{|x|^{n+\tau}}. \notag 
\end{align} 
\par 
To deal with the case $s\geq |x|/4$, we write 
\begin{align*} 
\Psi_{s,t}(x-y)-\Psi_{s,t}(x)=\int \hat{\varphi}(s\xi)\hat{\psi}(t\xi)
e^{2\pi i\langle x,\xi\rangle}\left(e^{-2\pi i\langle y,\xi\rangle}-1 \right)\, d\xi. 
\end{align*} 
Applying Minkowski's inequality again and using $(1)$ of Theorem $4.1$, we see 
that 
\begin{align} 
&\left(\int_0^\infty |\Psi_{s,t}(x-y)-\Psi_{s,t}(x)|^2 
\, \frac{dt}{t}\right)^{1/2} 
\\ 
&\leq \int |\hat{\varphi}(s\xi)|2\pi |y||\xi|\left(\int_0^\infty 
|\hat{\psi}(t\xi)|^2\, \frac{dt}{t}\right)^{1/2}\, d\xi    \notag 
\\ 
&\leq C|y|\int |\hat{\varphi}(s\xi)| |\xi| \, d\xi \notag 
\\ 
&\leq C|y|s^{-n-1} \int |\hat{\varphi}(\xi)| |\xi| \, d\xi   \notag 
\\
&\leq C\frac{|y|^\tau}{|x|^{n+\tau}}. \notag 
\end{align} 
By $(4.1)$ and $(4.2)$ we get the desired estimates. 
\end{proof} 
\begin{proof}[Proof of Theorem $4.1$]  

Let $a$ be a $(p, \infty)$ atom in  $H^p_{\mathscr H}(\Bbb R^n)$ supported 
on the cube $Q$ of the definition of the atom. 
Let $y_0$ be the center of $Q$.  Let $\widetilde{Q}$ be a concentric enlargement of $Q$ such that $3|y-y_0|<|x-y_0|$  if $y\in Q$ and $x\in \Bbb R^n\setminus 
\widetilde{Q}$.  
Let $\varphi$ be as in Lemma $4.4$.  
Then, using Lemma $4.4$,  the properties of an atom and the Schwarz 
inequality, for $x\in \Bbb R^n\setminus \widetilde{Q}$ we have 
\begin{align*} 
\left|\varphi_s*E_\psi^\epsilon(a)(x)\right|&=\left|\iint 
\left(\Psi_{s,t}(x-y)-\Psi_{s,t}(x-y_0) \right)
a_{(\epsilon)}(y,t)\, dy\, \frac{dt}{t}\right|  
\\ 
&\leq \int_Q\left(\int_0^\infty\left|\Psi_{s,t}(x-y)-\Psi_{s,t}(x-y_0) 
\right|^2 \, \frac{dt}{t}\right)^{1/2}\left(\int_0^\infty|a(y,t)|^2
\, \frac{dt}{t}\right)^{1/2}\, dy
\\ 
&\leq C|Q|^{-1/p}\int_{Q}\left(\int_0^\infty
\left|\Psi_{s,t}(x-y)-\Psi_{s,t}(x-y_0) \right|^2 \, \frac{dt}{t}\right)^{1/2}\, dy 
\\ 
&\leq C|Q|^{-1/p}\int_{Q}|y-y_0|^\tau |x-y_0|^{-n-\tau}
\, dy 
\\ 
&\leq C|Q|^{-1/p+ 1+\tau/n}|x-y_0|^{-n-\tau}. 
\end{align*}  
Since $p>n/(n+\tau)$, we thus have  
\begin{equation}\label{a1}
\int_{\Bbb R^n\setminus \widetilde{Q}} \sup_{s>0}
\left|\varphi_s*E_\psi^\epsilon(a)(x)
\right|^p\, dx 
\leq C|Q|^{-1+ p+p\tau/n}\int_{\Bbb R^n\setminus \widetilde{Q}}
|x-y_0|^{-p(n+\tau)} \leq C. 
\end{equation} 
\par 
The condition $(1)$ implies the $L^2$ boundedness of $g_\psi$ and hence  
by Lemma 2.7 we can see that 
$$\sup_{\epsilon\in(0,1)}\|E_\psi^\epsilon(h)\|_2 
\leq C\|h\|_{L^2_\mathscr H}, \quad h\in L^2_{\mathscr H}(\Bbb R^n).$$ 
So, by H\"{o}lder's inequality and the properties (i), (ii) of $a$, we get 
\begin{align}\label{a2}
\int_{\widetilde{Q}} \sup_{s>0}\left|\varphi_s*E_\psi^\epsilon(a)(x)
\right|^p\, dx 
&\leq C|Q|^{1-p/2}\left(\int_{\widetilde{Q}}|M(E_\psi^\epsilon(a))(x)|^2
\, dx \right)^{p/2} 
\\ 
&\leq C|Q|^{1-p/2}\left(\int_{Q}\int_0^\infty |a(y,t)|^2\, \frac{dt}{t} \, dy 
\right)^{p/2}    \notag 
\\ 
&\leq C.    \notag
\end{align} 
Combining \eqref{a1} and \eqref{a2}, we have 
\begin{equation}\label{a3}
\int_{\Bbb R^n} \sup_{s>0}\left|\varphi_s*E_\psi^\epsilon(a)(x)\right|^p
\, dx \leq C. 
\end{equation} 
By Lemma $4.3$ and $(4.5)$ we can prove 
 \begin{equation*}\label{h1}
\int_{\Bbb R^n} \sup_{s>0}\left|\varphi_s*E_\psi^\epsilon(h)(x)\right|^p
\, dx \leq C\|h\|_{H^p_{\mathscr H}}^p.   
\end{equation*} 
This completes the proof.

\end{proof} 
Theorem $4.2$ can be shown similarly.  
\par    
Also, we can prove the following mapping properties of $g_\psi$ and 
$\Delta_\psi$ on $H^p(\Bbb R^n)$ in the same way. 
\begin{theorem} Suppose that $\psi$ fulfills the hypotheses of 
Theorem $4.1$.  Set $F(\psi,f)(x,t)=f*\psi_t(x)$.  
Then if  $n/(n+\tau) <p\leq 1$, 
$$\|F(\psi, f)\|_{H^p_{\mathscr H}}\leq C\|f\|_{H^p}  $$  
for $f\in H^p(\Bbb R^n)\cap L^2(\Bbb R^n)$.   
\end{theorem}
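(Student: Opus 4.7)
The approach is to mirror the proof of Theorem 4.1 with the roles of the source and target Hardy spaces swapped: I will decompose $f$ into scalar $H^p$-atoms and establish a uniform bound on the $H^p_{\mathscr H}$-norm of $F(\psi,a)$ for each such atom $a$. Since $f \in H^p \cap L^2$, write $f = \sum_k \lambda_k a_k$ as a sum of $(p,\infty)$-atoms converging in both $H^p$ and $L^2$ with $\sum_k \lambda_k^p \leq C\|f\|_{H^p}^p$. Because $n/(n+\tau) < p \leq 1$ and $\tau \leq 1$, the moment condition on atoms reduces to $\int a_k = 0$. Condition (1) of Theorem 4.1 together with Plancherel's theorem gives the $L^2$-boundedness of $g_\psi$, so $F(\psi,\cdot)\colon L^2 \to L^2_{\mathscr H}$ is bounded and $F(\psi,f) = \sum_k \lambda_k F(\psi,a_k)$ converges in $L^2_{\mathscr H}$. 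Using the $p$-subadditivity of $\|\cdot\|_{H^p_{\mathscr H}}^p$ (which follows from the sublinearity of $h \mapsto h^*$ and $|\sum c_i|^p \leq \sum |c_i|^p$ for $p \leq 1$), the theorem reduces to proving the uniform atomic bound $\|F(\psi,a)\|_{H^p_{\mathscr H}} \leq C$.

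Fix such an atom $a$ supported on a cube $Q$ with center $y_0$, and let $\widetilde Q$ be its concentric enlargement chosen so that $3|y - y_0| < |x - y_0|$ whenever $y \in Q$ and $x \notin \widetilde Q$. Setting $h(x,t) = a*\psi_t(x)$ and $\Psi_{s,t} = \varphi_s*\psi_t$ as in Lemma 4.4, one has $\varphi_s * h^t(x) = a * \Psi_{s,t}(x)$. Away from the atom, I invoke the cancellation $\int a = 0$ to write
\[ a * \Psi_{s,t}(x) = \int_Q a(y)\bigl(\Psi_{s,t}(x-y) - \Psi_{s,t}(x-y_0)\bigr)\,dy, \]
and combine Minkowski's inequality in $t$, Lemma 4.4 (whose estimate is uniform in $s$), and $\|a\|_\infty \leq |Q|^{-1/p}$ to obtain, uniformly in $s>0$,
\[ \Bigl(\int_0^\infty |a * \Psi_{s,t}(x)|^2\,\frac{dt}{t}\Bigr)^{1/2} \leq C |Q|^{-1/p + 1 + \tau/n}\, |x - y_0|^{-n-\tau}. \]
Since $p(n+\tau) > n$, integrating the $p$-th power of $\sup_s (\cdot)$ over $\Bbb R^n \setminus \widetilde Q$ yields an absolute constant, exactly as in the analogous step of the proof of Theorem 4.1.

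For the complementary piece over $\widetilde Q$ the plan is to pass through $L^2$. The pointwise bound $|\varphi_s * h^t(x)| \leq C M(h^t)(x)$ together with Fubini yields $\|h^*\|_2^2 \leq C\|h\|_{L^2_{\mathscr H}}^2$, and combining this with the $L^2$-boundedness of $g_\psi$ and $\|a\|_2 \leq |Q|^{1/2 - 1/p}$ gives $\|h^*\|_2 \leq C|Q|^{1/2 - 1/p}$; H\"older's inequality on $\widetilde Q$ then delivers $\int_{\widetilde Q}|h^*|^p \leq C$. The main obstacle, as I anticipate it, is setting up the vector-valued machinery cleanly so that the grand maximal function $h \mapsto h^*$ defining the $H^p_{\mathscr H}$-norm is dominated on $L^2$ by an honest vector-valued Hardy--Littlewood maximal function; once this is in hand, both the near-atom $L^2$ estimate and the propagation of the atomic decomposition of $f$ to an $L^2_{\mathscr H}$-decomposition of $F(\psi,f)$ fall out cleanly, and the away-from-atom computation reduces to a direct application of Lemma 4.4 identical in structure to the corresponding step in the proof of Theorem 4.1.
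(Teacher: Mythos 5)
Your proposal is correct and follows essentially the same route as the paper: atomic decomposition into scalar $(p,\infty)$ atoms, the far-field estimate from the cancellation of $a$, Minkowski's inequality and Lemma 4.4, and an $L^2$--H\"older argument on $\widetilde{Q}$. The only difference is in the local step, where the ``vector-valued machinery'' you worry about is unnecessary: the paper simply applies Minkowski's inequality to get $\bigl(\int_0^\infty |\varphi_s*\psi_t*a(x)|^2\, dt/t\bigr)^{1/2}\leq \varphi_s*g_\psi(a)(x)\leq CM(g_\psi(a))(x)$ pointwise, which yields the same $L^2(\widetilde{Q})$ bound as your Fubini argument.
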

\begin{theorem} We assume that $\psi$ fulfills the hypotheses of 
Theorem $4.2$.  Let $G(\psi,f)(x,k)=f*\psi_{2^k}(x)$.
Then 
$$\|G(\psi,f)\|_{H^p_{\mathscr K}}\leq C\|f\|_{H^p}, \quad 
f\in H^p(\Bbb R^n)\cap L^2(\Bbb R^n),  $$  
if  $n/(n+\tau) <p\leq 1$. 
\end{theorem}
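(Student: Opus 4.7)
The plan is to mirror the atomic decomposition argument used for Theorem $4.1$. First, decompose $f\in H^p\cap L^2$ into scalar $(p,\infty)$ atoms, $f=\sum_j\lambda_j a_j$, with convergence in both $H^p(\Bbb R^n)$ and $L^2(\Bbb R^n)$, and $\sum_j|\lambda_j|^p\leq C\|f\|_{H^p}^p$. Since $n/(n+\tau)<p\leq 1$ with $0<\tau\leq 1$, the required number of vanishing moments is $[n(1/p-1)]=0$, so each atom $a$ satisfies only $\int a\,dy=0$. It suffices to prove the uniform estimate $\|G(\psi,a)\|_{H^p_{\mathscr K}}\leq C$ for every such atom supported in a cube $Q$ centered at $y_0$, with $\|a\|_\infty\leq|Q|^{-1/p}$, and then invoke the $p$-subadditivity of $\|\cdot\|_{H^p_{\mathscr K}}^p$.

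The key technical ingredient is a discrete analogue of Lemma $4.4$: with $\Psi_{s,t}=\varphi_s*\psi_t$ and $|x|>3|y|$,
$$\left(\sum_{k\in\Bbb Z}|\Psi_{s,2^k}(x-y)-\Psi_{s,2^k}(x)|^2\right)^{1/2}\leq C\frac{|y|^\tau}{|x|^{n+\tau}}$$
uniformly in $s>0$. When $0<s<|x|/4$, I would write the difference as $\varphi_s$ convolved against differences of $\psi_{2^k}$; since $|x-z|\geq 2|y|$ for $|z|<s$, Minkowski's inequality in $\mathscr K$ and condition $(2)$ of Theorem $4.2$ deliver the bound. When $s\geq|x|/4$, I would pass to the Fourier side,
$$\Psi_{s,t}(x-y)-\Psi_{s,t}(x)=\int\hat{\varphi}(s\xi)\hat{\psi}(t\xi)e^{2\pi i\langle x,\xi\rangle}\bigl(e^{-2\pi i\langle y,\xi\rangle}-1\bigr)\,d\xi,$$
apply Minkowski in $\mathscr K$, and combine condition $(1)$ of Theorem $4.2$ (which gives $(\sum_k|\hat\psi(2^k\xi)|^2)^{1/2}\leq C$) with the rapid decay of $\hat\varphi$ to recover the same $|y|^\tau|x|^{-n-\tau}$ bound.

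With this lemma in hand, for $x\in\Bbb R^n\setminus\widetilde Q$ (an enlargement ensuring $3|y-y_0|<|x-y_0|$ for $y\in Q$), the vanishing moment of $a$ lets me replace $\Psi_{s,2^k}(x-y)$ by $\Psi_{s,2^k}(x-y)-\Psi_{s,2^k}(x-y_0)$; Minkowski in $\mathscr K$, the discrete analogue above, and the sup norm bound on $a$ yield
$$\sup_{s>0}\left(\sum_{k\in\Bbb Z}|\varphi_s*G(\psi,a)^k(x)|^2\right)^{1/2}\leq C|Q|^{-1/p+1+\tau/n}|x-y_0|^{-n-\tau},$$
and integration of the $p$-th power over $\Bbb R^n\setminus\widetilde Q$ converges exactly because $p>n/(n+\tau)$, with the $|Q|$ exponents balancing to zero as in $(4.3)$. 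For $x\in\widetilde Q$, I would use H\"older's inequality, the pointwise bound $\sup_s|\varphi_s*g|\leq CM(g)$, and the Fefferman--Stein vector-valued maximal inequality in $L^2$ to reduce to $\sum_k\|a*\psi_{2^k}\|_2^2$, which by Plancherel and condition $(1)$ of Theorem $4.2$ is bounded by $C\|a\|_2^2\leq C|Q|^{1-2/p}$; the factor $|Q|^{1-p/2}$ from H\"older then gives a uniform constant. The most delicate step will be the $s\geq|x|/4$ case of the discrete lemma, where the decay in $|x|$ must be extracted entirely from $\hat\varphi(s\xi)$; once that is settled, the remaining arguments are exact parallels of those used for Theorem $4.1$.
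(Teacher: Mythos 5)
Your proposal is correct and follows essentially the same route the paper intends: the paper proves Theorem 4.6 by repeating the proof of Theorem 4.5 with the discrete parameter, i.e.\ scalar $(p,\infty)$-atomic decomposition, a discrete analogue of Lemma 4.4 proved by the same two-case split in $s$ (Minkowski in $\mathscr K$ against condition (2) for $s<|x|/4$, and the Fourier-side argument with condition (1) for $s\geq |x|/4$), the moment trick off $\widetilde{Q}$, and H\"older plus a maximal-function bound on $\widetilde{Q}$. The only cosmetic difference is that on $\widetilde{Q}$ the paper pulls the supremum inside via Minkowski to get $\sup_s(\sum_k|\varphi_s*\psi_{2^k}*a|^2)^{1/2}\leq CM(\Delta_\psi(a))$ and then uses the scalar Hardy--Littlewood maximal theorem together with the $L^2$ boundedness of $\Delta_\psi$ from condition (1), so the Fefferman--Stein vector-valued inequality you invoke is not actually needed (though it also works).
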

\begin{proof}[Proof of Theorem $4.5$]  The proof is similar to that 
of  Theorem $4.1$.  By the atomic decomposition, it suffices 
to show that  $\|F(\psi, a)\|_{H^p_{\mathscr H}}\leq C$, where $a$ is a 
$(p,\infty)$ atom in $H^p(\Bbb R^n)$ such that $\|a\|_\infty\leq |Q|^{-1/p}$, 
$\supp(a)\subset Q$ with a cube $Q$ and $\int a=0$. Let $y_0$ be the center of 
$Q$ and 
let $\widetilde{Q}$, $\varphi_s$, $\Psi_{s,t}$ be as in the proof of 
Theorem $4.1$. Then, using Minkowski's inequality and Lemma $4.4$, for  
$x\in \Bbb R^n\setminus \widetilde{Q}$  we have 
\begin{align*} 
\left(\int_0^\infty
\left|\varphi_s*\psi_t*a(x)\right|^2\, \frac{dt}{t}\right)^{1/2} 
&=\left(\int_0^\infty\left|\int 
\left(\Psi_{s,t}(x-y)-\Psi_{s,t}(x-y_0) \right)
a(y)\, dy\right|^2  \, \frac{dt}{t}\right)^{1/2} 
\\ 
&\leq C|Q|^{-1/p}\int_{Q}\left(\int_0^\infty
\left|\Psi_{s,t}(x-y)-\Psi_{s,t}(x-y_0) \right|^2 \, \frac{dt}{t}\right)^{1/2}\, dy 
\\ 
&\leq C|Q|^{-1/p+ 1+\tau/n}|x-y_0|^{-n-\tau}. 
\end{align*}  
Therefore, as in \eqref{a1}, for $p>n/(n+\tau)$, we have 
\begin{equation*}
\int_{\Bbb R^n\setminus \widetilde{Q}} \sup_{s>0}
\left(\int_0^\infty
\left|\varphi_s*\psi_t*a(x)\right|^2\, \frac{dt}{t}\right)^{p/2} 
\leq C. 
\end{equation*}  
Since by the Minkowski inequality we easily see that
$$ \sup_{s>0}
\left(\int_0^\infty
\left|\varphi_s*\psi_t*a(x)\right|^2\, \frac{dt}{t}\right)^{1/2} 
\leq \sup_{s>0}\varphi_s*g_\psi(a)(x)\leq CM(g_\psi(a))(x),  $$  
 as in \eqref{a2} we have 
\begin{equation*}
\int_{\widetilde{Q}} \sup_{s>0}
\left(\int_0^\infty
\left|\varphi_s*\psi_t*a(x)\right|^2\, \frac{dt}{t}\right)^{p/2} 
\leq C. 
\end{equation*}  
Collecting results, we have the desired estimates. 

\end{proof} 
The proof of Theorem 4.6 is similar. 
Using Theorems 4.1, 4.5 and Theorems 4.2, 4.6, we can show analogues of 
Corollaries 2.11 and 3.7 for $p\leq 1$.  
\begin{theorem} Suppose that $\psi$ fulfills the hypotheses of 
Theorem $4.1$. 
Put $m(\xi)=\int_0^\infty |\hat{\psi}(t\xi)|^2\, dt/t$. 
We assume that $m$ does not vanish in 
$\Bbb R^n\setminus\{0\}$ and  $m\in 
C^k(\Bbb R^n\setminus\{0\})$, where $k$ is a positive integer satisfying 
$k/n>1/p-1/2$, with $ n/(n+\tau)<p\leq 1$. Then we have 
$$\|F(\psi,f)\|_{H^p_{\mathscr H}}\simeq \|f\|_{H^p}$$ 
 for $f\in H^p(\Bbb R^n)\cap L^2(\Bbb R^n)$, where $F(\psi,f)$ is as in 
 Theorem $4.5$.  
\end{theorem}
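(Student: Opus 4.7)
The upper bound $\|F(\psi,f)\|_{H^p_{\mathscr H}}\leq C\|f\|_{H^p}$ is exactly Theorem $4.5$, so the plan reduces to establishing the reverse inequality $\|f\|_{H^p}\leq C\|F(\psi,f)\|_{H^p_{\mathscr H}}$. My overall strategy is to transpose the argument of Theorem $2.9$ into the $H^p$ setting: first control $T_m f$ in $H^p$ by $F(\psi,f)$ in $H^p_{\mathscr H}$ via Theorem $4.1$, and then invert the multiplier $m$ by invoking an $H^p$ version of the H\"{o}rmander--Mihlin multiplier theorem, where the numerical hypothesis $k/n>1/p-1/2$ plays its decisive role.

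First I would observe that $\widetilde{\bar\psi}$ satisfies the same hypotheses as $\psi$ in Theorem $4.1$, since $|\widehat{\widetilde{\bar\psi}}|=|\hat\psi|$ and condition $(2)$ is preserved under $\psi(x)\mapsto\overline{\psi(-x)}$. As in the computation in the proof of Proposition $2.8$, for $f\in H^p\cap L^2$ and $F=F(\psi,f)$ one has the identity
$$E_{\widetilde{\bar\psi}}^\epsilon(F)(x)=T_{m^{(\epsilon)}}f(x),\qquad m^{(\epsilon)}(\xi)=\int_\epsilon^{\epsilon^{-1}}|\hat\psi(t\xi)|^2\,\frac{dt}{t}.$$
Applying Theorem $4.1$ to $\widetilde{\bar\psi}$ with $h=F(\psi,f)$ then produces
$$\sup_{\epsilon\in(0,1)}\|T_{m^{(\epsilon)}}f\|_{H^p}\leq C\|F(\psi,f)\|_{H^p_{\mathscr H}}.$$
To pass $\epsilon\to 0$, I would use that $m^{(\epsilon)}\to m$ pointwise and is uniformly bounded, so $T_{m^{(\epsilon)}}f\to T_m f$ in $L^2$ and hence $\varphi_s*T_{m^{(\epsilon)}}f(x)\to\varphi_s*T_m f(x)$ pointwise for each $s>0$; Fatou's lemma applied to the grand maximal function characterization of $H^p$ (which is a genuine $L^p$-quasi-norm) then yields $\|T_m f\|_{H^p}\leq C\|F(\psi,f)\|_{H^p_{\mathscr H}}$.

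The remaining step is to show that $m^{-1}$ is a bounded Fourier multiplier on $H^p$, for then $f=T_{m^{-1}}T_m f$ gives the desired bound. Since $m$ is homogeneous of degree $0$, of class $C^k$ on $\Bbb R^n\setminus\{0\}$, and bounded below there by a positive constant (by the non-vanishing hypothesis combined with homogeneity), the reciprocal $m^{-1}$ inherits the same structural properties; in particular $|\partial_\xi^\gamma m^{-1}(\xi)|\leq C_\gamma|\xi|^{-|\gamma|}$ for $|\gamma|\leq k$. The classical H\"{o}rmander--Mihlin multiplier theorem on $H^p$ (cf.\ \cite{FeS}) then guarantees that $T_{m^{-1}}$ is bounded on $H^p(\Bbb R^n)$ precisely because $k>n(1/p-1/2)$. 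I anticipate that this invocation of the $H^p$ multiplier theorem is the main obstacle: one has to locate a version tailored to multipliers with finitely many M\"{i}hlin-type derivative bounds on a homogeneous symbol, and match its numerical hypothesis to the integer smoothness condition in the statement. The $\epsilon\to 0$ passage and the symmetry verification for $\widetilde{\bar\psi}$ are routine by comparison.
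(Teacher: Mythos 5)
Your proposal is correct and follows essentially the same route as the paper: the upper bound is Theorem 4.5, and the reverse inequality is obtained from the identity $T_{m^{(\epsilon)}}f=E_{\widetilde{\bar{\psi}}}^\epsilon(F)$ from Proposition 2.8, the uniform bound $\|E_{\widetilde{\bar{\psi}}}^\epsilon(F)\|_{H^p}\leq C\|F(\psi,f)\|_{H^p_{\mathscr H}}$ from Theorem 4.1, a Fatou-type passage to the limit $\epsilon\to 0$, and the $H^p$ multiplier theorem for the degree-zero homogeneous symbol $m^{-1}\in C^k$ with $k/n>1/p-1/2$ (the paper cites Garcia-Cuerva and Rubio de Francia for this last step, which is the same Mihlin-type result you invoke). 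The extra details you supply — that $\widetilde{\bar{\psi}}$ inherits the hypotheses of Theorem 4.1 and the $L^2$-convergence justification of the $\liminf$ — are consistent with what the paper leaves implicit.
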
 
\begin{theorem} We assume that $\psi$ fulfills the hypotheses of 
Theorem $4.2$. 
Set $m(\xi)=\sum_{j=-\infty}^\infty 
|\hat{\psi}(2^j\xi)|^2$.  Let $ n/(n+\tau)<p\leq 1$. 
We  assume that $m(\xi)\neq 0$ for all  
$\xi\in \Bbb R^n\setminus\{0\}$ and  $m\in 
C^k(\Bbb R^n\setminus\{0\})$ with  a positive integer $k$ as in Theorem 
$4.7$. Let $G(\psi,f)$ be as in Theorem $4.6$. Then we have 
$$\|G(\psi,f)\|_{H^p_{\mathscr K}}\simeq \|f\|_{H^p}, \quad 
f\in H^p(\Bbb R^n)\cap L^2(\Bbb R^n). $$ 
\end{theorem}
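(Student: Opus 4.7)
The plan is to adapt the strategy of Theorem 4.7 to the discrete setting, using Theorem 4.2 (and its partner Theorem 4.6) in the roles played by Theorems 4.1 and 4.5. The upper bound
$$\|G(\psi,f)\|_{H^p_{\mathscr K}}\le C\|f\|_{H^p}$$
is exactly Theorem 4.6, so only the reverse inequality needs argument, and it will follow the familiar template: realize $T_m f$ as $L_{\widetilde{\bar{\psi}}}^{N}(G(\psi,f))$ in the limit and then invert $m$ as a Fourier multiplier on $H^p$.

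For $f\in H^p\cap L^2$, set
$$m^{(N)}(\xi)=\sum_{|k|\le N}|\hat{\psi}(2^k\xi)|^2.$$
Exactly as in the computation in the proof of Proposition 3.4, Plancherel and Fubini yield the pointwise identity $L_{\widetilde{\bar{\psi}}}^{N}(G(\psi,f))(x)=T_{m^{(N)}}f(x)$. Now $\widetilde{\bar{\psi}}$ inherits the hypotheses of Theorem 4.2 from $\psi$, since both conditions $(1)$ and $(2)$ of Theorem 4.2 are invariant under $x\mapsto -x$ and complex conjugation; applying that theorem with $\widetilde{\bar{\psi}}$ in place of $\psi$ gives
$$\|T_{m^{(N)}}f\|_{H^p}=\|L_{\widetilde{\bar{\psi}}}^{N}(G(\psi,f))\|_{H^p}\le C\|G(\psi,f)\|_{H^p_{\mathscr K}},\qquad\text{uniformly in }N.$$

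Next, I would pass to the limit $N\to\infty$. Monotone convergence of the non-negative series gives $m^{(N)}\to m$ pointwise on $\Bbb R^n\setminus\{0\}$ with uniform bound $\|m^{(N)}\|_\infty\le \|m\|_\infty<\infty$. Dominated convergence in $L^2$ then forces $T_{m^{(N)}}f\to T_m f$ in $L^2$, hence pointwise a.e.\ along a subsequence. The convolutions $\varphi_s*T_{m^{(N_j)}}f(x)$ converge pointwise to $\varphi_s*T_m f(x)$, so the nontangential maximal function of $T_m f$ is bounded pointwise by the liminf of those of $T_{m^{(N_j)}}f$; Fatou's lemma upgrades the uniform bound above to
$$\|T_m f\|_{H^p}\le C\|G(\psi,f)\|_{H^p_{\mathscr K}}.$$

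To close the proof, $m$ is dyadically homogeneous of degree $0$, nonvanishing, and in $C^k(\Bbb R^n\setminus\{0\})$, so the same is true for $m^{-1}$. Dyadic homogeneity makes $\partial^\gamma m^{-1}$ dyadically homogeneous of degree $-|\gamma|$, and boundedness on the single annulus $B_0$ then gives $|\partial^\gamma m^{-1}(\xi)|\le C_\gamma|\xi|^{-|\gamma|}$ for all $|\gamma|\le k$. Since $k/n>1/p-1/2$, the Hörmander-type Fourier multiplier theorem on $H^p$ (e.g.\ Taibleson--Weiss or Calderón--Torchinsky) guarantees that $T_{m^{-1}}$ extends to a bounded operator on $H^p$, and therefore
$$\|f\|_{H^p}=\|T_{m^{-1}}T_m f\|_{H^p}\le C\|T_m f\|_{H^p}\le C'\|G(\psi,f)\|_{H^p_{\mathscr K}},$$
which is the desired reverse inequality. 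The main obstacle I anticipate is the passage $N\to\infty$: because $H^p$ is only a quasi-Banach space for $p<1$ with no convenient duality, one must combine the $L^2$ convergence with a subsequence argument and Fatou applied to the maximal function rather than invoke a weak-$*$ compactness; everything else is a transcription of the continuous-parameter argument that underlies Theorem 4.7.
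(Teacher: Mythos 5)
Your proposal is correct and follows essentially the same route as the paper, which proves Theorem 4.8 by transcribing the proof of Theorem 4.7: the upper bound is Theorem 4.6, the identity $T_{m^{(N)}}f=L_{\widetilde{\bar{\psi}}}^{N}(G(\psi,f))$ comes from the proof of Proposition 3.4, Theorem 4.2 bounds this in $H^p$ by $\|G(\psi,f)\|_{H^p_{\mathscr K}}$, and $m^{-1}$ is inverted as an $H^p$ multiplier. Your added justifications (the Fatou/subsequence passage to the limit, and the observation that dyadic homogeneity plus $C^k$ on the annulus yields the Mikhlin-type bounds $|\partial^\gamma m^{-1}(\xi)|\le C_\gamma|\xi|^{-|\gamma|}$) are exactly the details the paper leaves implicit, and they are sound.
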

\begin{proof}[Proof of Theorem $4.7$]  
By Theorem $4.5$ we have 
$\|F(\psi,f)\|_{H^p_{\mathscr H}}\leq C \|f\|_{H^p}$. 
 To prove the 
reverse inequality we note that $f=T_{m^{-1}}T_m f$.  Since $m^{-1} \in 
C^k(\Bbb R^n\setminus\{0\})$ and it is homogeneous of degree $0$, 
$m^{-1}$ is a Fourier multiplier for $H^p$ by \cite[pp. 347--348]{GR}. Thus 
\begin{equation}\label{eq1}
\|f\|_{H^p}\leq C\|T_m f\|_{H^p}
\leq C\liminf_{\epsilon\to 0}\|T_{m^{(\epsilon)}}f\|_{H^p},    
\end{equation}  and by the proof of Proposition $2.8$  we see that 
\begin{equation*} 
T_{m^{(\epsilon)}}f=E_{\widetilde{\bar{\psi}}}^\epsilon(F),    
\end{equation*} 
where $m^{(\epsilon)}$, $F$ are defined as in the proof of 
Proposition $2.8$. 
Thus by Theorem $4.1$ we have 
\begin{align*} 
\|T_{m^{(\epsilon)}}f\|_{H^p}=\|E_{\widetilde{\bar{\psi}}}^\epsilon(F)\|_{H^p}
\leq C\|F(\psi,f)\|_{H^p_{\mathscr H}}, 
\end{align*} 
which combined with \eqref{eq1} implies the reverse inequality. 
\end{proof} 
Theorem $4.8$ can be proved similarly. 
\par 
We note that Theorems 4.5 and 4.6 imply that $\|g_\psi(f)\|_p 
\leq C\|f\|_{H^p}$, $\|\Delta_\psi(f)\|_p \leq C\|f\|_{H^p}$.  
Under the assumptions of Theorems 4.7 and 4.8,  the reverse 
inequalities,  which would improve results, are not available 
at present stage of the research. 
 For related results which can handle Littlewood-Paley operators 
 like $g_Q$, we refer to \cite{U}.   
\par 
Let $\varphi^{(\alpha)}$ on $\Bbb R^1$ be as in \eqref{1.4}.   
Then we can show that 
\begin{equation}\label{mar}
\left(\int_0^\infty|\varphi^{(\alpha)}_t(x-y)-\varphi^{(\alpha)}_t(x)|^2
\, \frac{dt}{t}\right)^{1/2} 
\leq C\frac{|y|^\sigma}{|x|^{1+\sigma}}, \quad \sigma=(2\alpha-1)/2,  
\end{equation}  
if $2|y|<|x|$, where $1/2< \alpha<3/2$. Also, it is not difficult to see that 
the condition $(1)$ of Theorem $4.1$ is valid for $\varphi^{(\alpha)}$. Thus, 
from Theorem 4.5 we in particular have the second inequality of \eqref{1.5}
 for $1/2< \alpha<3/2$, $2/(2\alpha+1)<p\leq 1$.  
We shall give a proof of the estimate \eqref{mar} in Section $6$ for 
completeness.

\section{Applications to the theory of Sobolev spaces}  

Let $0<\alpha<n$ and 
\begin{equation}\label{cb2} 
T_\alpha(f)(x)=\left(\int_0^\infty \left|I_\alpha(f)(x)-
\dashint_{B(x,t)}I_\alpha(f)(y)\, dy\right|^2 
\frac{dt}{t^{1+2\alpha}}\right)^{1/2},  
\end{equation}  
where $I_\alpha$ is the Riesz potential operator defined by 
\begin{equation}\label{1.04}
\widehat{I_\alpha(f)}(\xi)=(2\pi|\xi|)^{-\alpha} \hat{f}(\xi).  
\end{equation}  
Then, from \cite{AMV} we can see the following result. 
\begin{theoremf}  Suppose that $1<p<\infty$ and $n\geq 2$. 
Let $T_\alpha$ be as in \eqref{cb2}.  Then 
$$\|T_1 (f)\|_p \simeq \|f\|_p, \quad f\in \mathscr S(\Bbb R^n).   $$
\end{theoremf}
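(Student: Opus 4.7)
The plan is to identify $T_1$ with a Littlewood--Paley operator of the form $g_\psi$ and then invoke Corollary~$2.11$ with $w=1$. Let $\phi(x)=|B(0,1)|^{-1}\chi_{B(0,1)}(x)$, so that $\dashint_{B(x,t)}g(y)\,dy = g*\phi_t(x)$ with $\phi_t(x)=t^{-n}\phi(x/t)$. Taking Fourier transforms and using \eqref{1.04},
\begin{equation*}
\bigl(I_1(f)-I_1(f)*\phi_t\bigr)^{\wedge}(\xi)
 = (2\pi|\xi|)^{-1}(1-\hat{\phi}(t\xi))\hat{f}(\xi)
 = t\,\hat{\psi}(t\xi)\hat{f}(\xi),
\end{equation*}
where $\hat{\psi}(\eta)=(2\pi|\eta|)^{-1}(1-\hat{\phi}(\eta))$. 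Consequently $I_1(f)(x)-\dashint_{B(x,t)}I_1(f)(y)\,dy = t\cdot f*\psi_t(x)$, and substituting into \eqref{cb2} with $\alpha=1$ makes the factor $t^2$ cancel the $t^{1+2}$ in the denominator, yielding $T_1(f)=g_\psi(f)$ pointwise. Since $\phi$ is radial with $\int\phi=1$, one has $\hat{\phi}(\eta)=1-c|\eta|^2+O(|\eta|^4)$ near the origin, so $\hat{\psi}$ extends continuously to $0$ with $\hat{\psi}(0)=0$, which corresponds to the cancellation \eqref{cancell} for $\psi$.

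I would then verify that $\psi$ satisfies the hypotheses of Corollary~$2.11$. Using the Riesz-potential representation
\begin{equation*}
\psi(x)=c_n\left(|x|^{1-n}-\int_{\Bbb R^n}|x-y|^{1-n}\phi(y)\,dy\right),
\end{equation*}
a Taylor expansion of $|x-y|^{1-n}$ about $y=0$, whose first-order term vanishes by radial symmetry of $\phi$, gives $|\psi(x)|\leq C|x|^{-n-1}$ for $|x|\geq 2$; the local estimate $|\psi(x)|\leq C|x|^{1-n}$ for $|x|\leq 1$ follows from the boundedness of $I_1\phi$ near the origin. From these bounds, $B_\epsilon(\psi)<\infty$ for every $\epsilon\in(0,1)$; $C_u(\psi)<\infty$ for every $u\in(1,n/(n-1))$, a non-empty interval precisely because $n\geq 2$; and $H_\psi(x)\leq C\min(|x|^{1-n},|x|^{-n-1})$, which is integrable on $\Bbb R^n$. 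Hence the conditions $(1)$--$(3)$ of Theorem~E all hold.

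The non-degeneracy condition is immediate: for fixed $\xi\neq 0$ and $t>0$ small, $\hat{\psi}(t\xi)\sim c|\xi|\,t\neq 0$. Applying Corollary~$2.11$ with $w=1$ then gives $\|f\|_p\simeq\|g_\psi(f)\|_p=\|T_1(f)\|_p$ for every $p\in(1,\infty)$, which is the desired equivalence.

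The main obstacle I expect is the decay estimate $|\psi(x)|\lesssim|x|^{-n-1}$ at infinity: the gain of the extra factor $|x|^{-1}$ over the naive bound $|x|^{-n}$ crucially relies on the vanishing of the first moment of $\phi$, and without it $\psi$ would fall outside the scope of Theorem~E. Once this decay is in hand, the remaining verifications are routine.
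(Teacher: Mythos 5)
Your proposal is correct and follows essentially the same route the paper takes: the paper quotes Theorem F from \cite{AMV}, but its own proof of the generalization (Theorem 5.1, via the argument written out for Theorem 5.3) is exactly your reduction, namely writing the square function as $g_\psi$ with $\psi=L_\alpha-\Phi*L_\alpha$ so that $\hat{\psi}(\xi)=(2\pi|\xi|)^{-\alpha}(1-\hat{\Phi}(\xi))$, proving the size estimates $|\psi(x)|\leq C|x|^{\alpha-n}$ near the origin and $|\psi(x)|\leq C|x|^{\alpha-n-[\alpha]-1}$ at infinity from the moment condition, and invoking Corollary 2.11 (resp.\ 3.7). Your identification $T_1(f)=g_\psi(f)$, the verification of conditions (1)--(3) of Theorem E, and the non-degeneracy check all match the paper's computations in Section 5.
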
  
In \cite{AMV} this was used to prove Theorem D in Section 1 when $n\geq 2$.  
Theorem F is generalized to the weighted $L^p$ spaces 
(see \cite{HL, Sa4}).  
\par  
We consider square functions generalizing $U_\alpha$ and $T_\alpha$ in 
\eqref{cb1} and \eqref{cb2}.  
 Let 
 \begin{equation}\label{cev2} 
 U_\alpha (f)(x)=\left(\int_0^\infty \left|f(x)-
 \Phi_t*f(x)\right|^2 \frac{dt}{t^{1+2\alpha}}\right)^{1/2},  \quad \alpha>0, 
 \end{equation} 
with $\Phi \in \mathscr M^\alpha$,  where  we say 
$\Phi \in \mathscr M^\alpha$, $\alpha>0$,  
if $\Phi$ is a bounded function on $\Bbb R^n$ with compact support satisfying  
$\int_{\Bbb R^n} \Phi(x)\, dx=1$; 
if $\alpha\geq 1$, we further assume that 
\begin{equation}\label{moment} 
\int_{\Bbb R^n} \Phi(x)x^\gamma \, dx=0  \quad  \text{for all $\gamma$ with 
$1\leq |\gamma|\leq [\alpha]$.}   
\end{equation}
When $1\leq \alpha<2$, \eqref{moment} is satisfied if $\Phi$ is even; 
in particular,  $\chi_0=\chi_{B(0,1)}/|B(0,1)|\in \mathscr M^\alpha$ 
for  $0<\alpha<2$ and if $\Phi=\chi_0$ in \eqref{cev2}, we have  
$U_\alpha$ of \eqref{cb1}.  
\par 
We also consider 
 \begin{equation}\label{cev} 
 T_\alpha (f)(x)=\left(\int_0^\infty \left|I_\alpha(f)(x)-
 \Phi_t*I_\alpha(f)(x)\right|^2 \frac{dt}{t^{1+2\alpha}}\right)^{1/2},  
\quad 0<\alpha<n, 
 \end{equation} 
where  $\Phi\in \mathscr M^\alpha$. 
If we set $\Phi=\chi_0$ in \eqref{cev},  we get  $T_\alpha$ of \eqref{cb2}.   
 \par 
We prove the following.  
\begin{theorem} Suppose that $T_\alpha$ is as in \eqref{cev} and $0<\alpha<n$, 
$1<p<\infty$. Let   $w\in A_p$.   
  Then 
$$\|T_\alpha (f)\|_{p,w} \simeq \|f\|_{p,w}, \quad f\in 
\mathscr S(\Bbb R^n).     $$  
\end{theorem}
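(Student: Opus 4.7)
The approach is to recast $T_\alpha$ as a classical Littlewood-Paley operator $g_\psi$ of the form \eqref{lpop} for an appropriately chosen kernel $\psi$, and then invoke Corollary $2.11$.

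I will define $\psi$ through its Fourier transform by setting
\begin{equation*}
\hat{\psi}(\eta) = (2\pi|\eta|)^{-\alpha}\bigl(1-\hat{\Phi}(\eta)\bigr),
\end{equation*}
so that on inversion $\psi(x) = c_\alpha|x|^{-n+\alpha}-I_\alpha(\Phi)(x)$ where $c_\alpha|x|^{-n+\alpha}$ is the Riesz kernel of $I_\alpha$. Using $\widehat{\psi_t}(\xi)=\hat{\psi}(t\xi)$ and the $-\alpha$-homogeneity of $|\xi|^{-\alpha}$, a direct Fourier computation will yield the pointwise identity
\begin{equation*}
I_\alpha(f)(x)-\Phi_t*I_\alpha(f)(x) = t^\alpha(\psi_t*f)(x),
\end{equation*}
so that after absorbing the factor $t^{2\alpha}$ into $dt/t^{1+2\alpha}$ one obtains $T_\alpha(f) = g_\psi(f)$ at every point.

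The bulk of the argument is then to verify that this $\psi$ meets conditions $(1), (2), (3)$ of Theorem E together with the non-degeneracy condition of Corollary $2.11$. Near the origin $\psi(x) = c_\alpha|x|^{-n+\alpha}+O(1)$, which gives the local $L^u$ bound $(2)$ for any $u<n/(n-\alpha)$ and the local part of $(3)$ since $\alpha>0$. For the decay at infinity I will fix $R$ with $\supp(\Phi)\subset B(0,R)$ and, for $|x|>2R$, write
\begin{equation*}
\psi(x) = -c_\alpha\int_{\Bbb R^n}\Phi(y)\bigl(|x-y|^{-n+\alpha}-|x|^{-n+\alpha}\bigr)\,dy,
\end{equation*}
then Taylor-expand $|x-y|^{-n+\alpha}$ in $y$ around $y=0$ to order $[\alpha]$. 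The moment conditions $\int\Phi(y)y^\gamma\,dy=0$ for $1\leq|\gamma|\leq[\alpha]$ built into the definition of $\mathscr M^\alpha$ will annihilate all intermediate terms, leaving $|\psi(x)|\leq C|x|^{-n-\delta}$ with $\delta=[\alpha]+1-\alpha>0$. This supplies $(1)$ for $\epsilon<\delta$ and the far part of $(3)$. I expect this decay estimate, which is the only place where the finer structure of $\mathscr M^\alpha$ enters, to be the main technical obstacle; the other verifications are routine pointwise bounds. For the non-degeneracy condition, I will note that $\hat{\psi}(t\xi)=0$ for all $t>0$ would force $\hat{\Phi}(t\xi)\equiv 1$ on the ray, contradicting $\hat{\Phi}(t\xi)\to 0$ as $t\to\infty$ by the Riemann-Lebesgue lemma.

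Once these four conditions are in hand, Corollary $2.11$ delivers $\|g_\psi(f)\|_{p,w}\simeq \|f\|_{p,w}$ for every $f\in L^p_w$, which via the identification $T_\alpha(f)=g_\psi(f)$ translates immediately to $\|T_\alpha(f)\|_{p,w}\simeq\|f\|_{p,w}$; since $\mathscr S(\Bbb R^n)\subset L^p_w$ for every $w\in A_p$, the claimed equivalence on the Schwartz class follows.
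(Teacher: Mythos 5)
Your proposal is correct and takes essentially the same route as the paper, which proves the discrete analogue (Theorem 5.3) by setting $\psi=L_\alpha-\Phi*L_\alpha$ with $L_\alpha$ the Riesz kernel, deriving the local bound $|\psi(x)|\leq C|x|^{\alpha-n}$, the decay $|\psi(x)|\leq C|x|^{\alpha-n-[\alpha]-1}$ via Taylor expansion and the moment conditions \eqref{moment}, and the formula $\hat{\psi}(\xi)=(2\pi|\xi|)^{-\alpha}(1-\hat{\Phi}(\xi))$, then invoking the corollary on norm equivalence; the paper states that Theorem 5.1 is obtained the same way through Corollary 2.11, exactly as you do. The only point you leave implicit is the standing cancellation condition \eqref{cancell}, which follows at once from your Fourier transform formula since $\hat{\psi}$ is continuous and $\hat{\psi}(\xi)\to 0$ as $\xi\to 0$.
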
  
By Theorem 5.1 we see that $U_\alpha$ 
can be used to characterize the weighted Sobolev spaces.  
\par  
Let $J_\alpha$ be the Bessel potential operator defined as 
$J_\alpha(g)=K_\alpha*g$  with 
$$\hat{K}_\alpha(\xi)=(1+4\pi^2|\xi|^2)^{-\alpha/2} $$  
(see \cite{St3}).   
Let $1<p<\infty$, $\alpha>0$ and $w\in A_p$. 
The weighted Sobolev 
space $W^{\alpha, p}_w(\Bbb R^n)$ is defined to be the collection of all the 
functions $f$ which can be expressed as $f=J_\alpha(g)$ with  
$g\in L^p_w(\Bbb R^n)$ and its norm is defined by  
$\|f\|_{p,\alpha,w}=\|g\|_{p,w}$.  
The weighted $L^p$ norm inequality for the Hardy-Littlewood maximal 
operator with $A_p$ weights (see \cite{GR}) implies that 
 $J_\alpha(g)\in L^p_w$ if $g\in L^p_w$, since it is known that 
 $|J_\alpha(g)|\leq CM(g)$  (see \cite{St3, Str}).  We also note that 
 $J_\alpha$ is injective on $L^p_w$.  So, the norm $\|f\|_{p,\alpha,w}$ is 
 well-defined.   
\par 
Applying Theorem 5.1, we have the following. 
\begin{corollary} Let $1<p<\infty$, $w\in A_p$ and $0<\alpha<n$.  
Let $U_\alpha$ be as in \eqref{cev2}.  Then 
$f\in W^{\alpha, p}_w(\Bbb R^n)$ if and only if $f\in L^p_w$ and 
$U_\alpha(f) \in L^p_w;$  furthermore, 
$$\|f\|_{p,\alpha,w}\simeq \|f\|_{p,w}+\|U_\alpha(f)\|_{p,w}.  $$
\end{corollary}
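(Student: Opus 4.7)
The plan is to exploit Theorem $5.1$ by relating the Bessel potential $J_\alpha$ to the Riesz potential $I_\alpha$ via a Mihlin-type Fourier multiplier, and then splitting the symbol of $J_{-\alpha}$ into low- and high-frequency pieces. Set
\[ m(\xi) = \frac{(2\pi|\xi|)^\alpha}{(1+4\pi^2|\xi|^2)^{\alpha/2}}
= (1+(2\pi|\xi|)^{-2})^{-\alpha/2}, \]
so that $J_\alpha = I_\alpha \circ m(D)$ formally. A direct computation shows that $m$ is smooth and bounded on $\Bbb R^n\setminus\{0\}$ (vanishing like $|\xi|^\alpha$ at the origin) and satisfies $|\partial_\xi^\gamma m(\xi)| \leq C_\gamma |\xi|^{-|\gamma|}$ for every multi-index $\gamma$; hence $m\in M^p(w)$ for all $w\in A_p$, $1<p<\infty$, by the Mihlin-H\"{o}rmander multiplier theorem with $A_p$ weights.

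For the implication ``$\Rightarrow$'', assume $f = J_\alpha g$ with $g\in L^p_w$ and $\|g\|_{p,w}=\|f\|_{p,\alpha,w}$. From $|J_\alpha g|\leq CM(g)$ and the $A_p$ maximal theorem one obtains $\|f\|_{p,w}\leq C\|g\|_{p,w}$. Writing $f = I_\alpha(m(D)g)$ gives $U_\alpha(f) = U_\alpha(I_\alpha(m(D)g)) = T_\alpha(m(D)g)$, so Theorem $5.1$ yields
\[ \|U_\alpha(f)\|_{p,w} \leq C\|m(D)g\|_{p,w} \leq C\|g\|_{p,w}
=C\|f\|_{p,\alpha,w}. \]

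For the reverse implication, suppose $f\in L^p_w$ and $U_\alpha(f)\in L^p_w$, and set $g = J_{-\alpha}f$ formally. To show $g\in L^p_w$ with $\|g\|_{p,w} \leq C(\|f\|_{p,w} + \|U_\alpha f\|_{p,w})$, pick a smooth cutoff $\eta$ with $\eta\equiv 1$ on $B(0,1)$ and supported in $B(0,2)$, and split
\[ (1+4\pi^2|\xi|^2)^{\alpha/2} = m_1(\xi) + m_2(\xi), \]
where $m_1(\xi) = \eta(\xi)(1+4\pi^2|\xi|^2)^{\alpha/2}$ is smooth and compactly supported. Thus $m_1$ is a bounded multiplier on $L^p_w$ and $\|m_1(D)f\|_{p,w}\leq C\|f\|_{p,w}$. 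On the other hand, $m_2(\xi) = (1-\eta(\xi))(1+4\pi^2|\xi|^2)^{\alpha/2} = (2\pi|\xi|)^\alpha n(\xi)$, where $n(\xi) = (1-\eta(\xi))(1+(2\pi|\xi|)^{-2})^{\alpha/2}$ is smooth, bounded, and supported away from the origin, hence a Mihlin multiplier, and $n(D)$ is bounded on $L^p_w$. Interpreting $m_2(D)f = I_{-\alpha}(n(D)f)$ and applying Theorem $5.1$ to $G = I_{-\alpha}(n(D)f)$ yields
\[ \|G\|_{p,w} \leq C\|T_\alpha(G)\|_{p,w}
= C\|U_\alpha(I_\alpha G)\|_{p,w} = C\|U_\alpha(n(D)f)\|_{p,w}. \]

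The crucial remaining step is to dominate $\|U_\alpha(n(D)f)\|_{p,w}$ by $\|U_\alpha(f)\|_{p,w}$. Since $n(D)$ commutes with every convolution operator $\Phi_t*$, one has $n(D)f - \Phi_t*n(D)f = n(D)(f-\Phi_t*f)$ for each $t>0$, so $U_\alpha(n(D)f)(x)$ equals the norm of $t\mapsto n(D)(f-\Phi_t*f)(x)$ in $L^2((0,\infty);dt/t^{1+2\alpha})$. The Hilbert-space-valued version of the Mihlin-H\"{o}rmander multiplier theorem with $A_p$ weights, applied with values in $L^2(dt/t^{1+2\alpha})$, then supplies $\|U_\alpha(n(D)f)\|_{p,w}\leq C\|U_\alpha(f)\|_{p,w}$. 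Assembling the estimates gives $\|J_{-\alpha}f\|_{p,w}\leq C(\|f\|_{p,w}+\|U_\alpha f\|_{p,w})$, which exhibits $f$ as $J_\alpha g$ with $g\in L^p_w$ and proves the norm equivalence for $f\in\mathscr S(\Bbb R^n)$; a standard density argument then extends the statement to all admissible $f$. I anticipate that the vector-valued multiplier estimate in the $A_p$ setting, together with the density/approximation step needed to make the formal Fourier-side manipulations rigorous for general $f\in L^p_w$, will be the principal technical hurdles.
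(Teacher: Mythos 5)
Your forward implication is essentially the paper's own: your multiplier $m(\xi)=(2\pi|\xi|)^\alpha(1+4\pi^2|\xi|^2)^{-\alpha/2}$ is exactly the $\ell$ of the paper's Lemma 5.5(1), and the bound $\|U_\alpha(J_\alpha g)\|_{p,w}\leq C\|g\|_{p,w}$ is obtained there the same way from Theorem 5.1. Your reverse a priori estimate, however, takes a genuinely different route. The paper uses the scalar identity $(1+4\pi^2|\xi|^2)^{\alpha/2}=m(\xi)+m(\xi)(2\pi|\xi|)^\alpha$ of Lemma 5.5(2), which yields $\|g\|_{p,w}\leq C\|J_\alpha g\|_{p,w}+C\|I_{-\alpha}J_\alpha g\|_{p,w}$ using only a scalar weighted Mihlin--H\"ormander theorem, and then applies Theorem 5.1 once more. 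You instead split $J_{-\alpha}$ into a harmless low-frequency piece and $I_{-\alpha}\circ n(D)$, which forces you to commute $n(D)$ past the square function via a Hilbert-space-valued weighted multiplier theorem. That step is correct (it follows, e.g., from operator-valued Calder\'on--Zygmund theory as in \cite{RRT} together with extrapolation), but it is heavier machinery than the paper needs; the paper's identity avoids vector-valued multipliers entirely. Both decompositions deliver the same inequality $\|J_{-\alpha}f\|_{p,w}\leq C(\|f\|_{p,w}+\|U_\alpha(f)\|_{p,w})$ for suitably nice $f$.

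The genuine gap is the final sentence, where you defer everything to ``a standard density argument.'' For the implication that $f\in L^p_w$ with $U_\alpha(f)\in L^p_w$ lies in $W^{\alpha,p}_w$, density is not the right mechanism: the class of such $f$ carries no norm in which $\mathscr S_0$ is known a priori to be dense, and $J_{-\alpha}f$ is a priori only a tempered distribution, so the estimate proved for Schwartz functions does not transfer by continuity. The paper (in its detailed proof of the discrete analogue, Corollary 5.4, to which the present corollary is reduced) supplies a specific compactness argument in place of this: set $f^{(\epsilon)}=\varphi_\epsilon*f$ and $g^{(\epsilon)}=J_{-\alpha}(\varphi_\epsilon)*f\in L^p_w$, so that $f^{(\epsilon)}=J_\alpha(g^{(\epsilon)})$; show $U_\alpha(f^{(\epsilon)})\leq CM(U_\alpha(f))$ by Minkowski's inequality to get $\sup_\epsilon\|g^{(\epsilon)}\|_{p,w}<\infty$; extract a weakly convergent subsequence $g^{(\epsilon_k)}\rightharpoonup g$ in $L^p_w$; and identify $f=J_\alpha(g)$ by testing against $\mathscr S(\Bbb R^n)$ and using the symmetry of $K_\alpha$ (the paper's Lemma 5.6). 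A parallel truncation-plus-limiting argument (the operators with $t$ restricted to $[\epsilon,\epsilon^{-1}]$, each bounded on $L^p_w$ via the maximal function) is needed even in the forward direction to pass from $g\in\mathscr S_0$ to general $g\in L^p_w$. Without some version of these two steps your argument proves the norm equivalence only on a dense subclass and does not yet yield the stated ``if and only if.''
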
  
For the case $n=1$ and $\alpha=1$, see Remark \ref{one} below. 
We refer to \cite{St, St2, Str, Wh} for relevant results. 
See \cite{HL} for characterization of the weighted Sobolev space $W^{1,p}_w$ 
using square functions. 
\par  
Also, we consider discrete parameter versions of  $T_\alpha$ and $U_\alpha$: 
\begin{gather}\label{dev}  
D_\alpha(f)=\left(\sum_{k=-\infty}^\infty\left|I_\alpha(f)(x)-
\Phi_{2^k}*I_\alpha(f)(x)\right|^2 2^{-2k\alpha} \right)^{1/2}, 
\quad 0<\alpha<n, 
\\
\label{dev2}  
E_\alpha(f)=\left(\sum_{k=-\infty}^\infty\left|f(x)-
\Phi_{2^k}*f(x)\right|^2 2^{-2k\alpha} \right)^{1/2}, \quad \alpha>0, 
\end{gather}   
where $\Phi\in \mathscr M^\alpha$.  If we put 
$\Phi=\chi_0$ in \eqref{dev2}, we have $E_\alpha$  
 of \eqref{db1}. 
We have discrete parameter analogues of Theorem $5.1$ and Corollary $5.2$. 
\begin{theorem} Let $0<\alpha<n$ and $1<p<\infty$. Let $D_\alpha$ be 
as in \eqref{dev}.  Then 
$$\|D_\alpha(f)\|_{p,w} \simeq \|f\|_{p,w}, \quad f\in \mathscr S(\Bbb R^n), $$
where $w$ is any weight in $A_p$. 
\end{theorem}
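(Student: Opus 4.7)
The plan is to reduce Theorem~$5.3$ to Corollary~$3.7$ by realizing $D_\alpha(f)$ as a discrete Littlewood--Paley square function $\Delta_\psi(f)$ for a suitable $\psi$. Define $\psi$ through its Fourier transform
$$\hat{\psi}(\xi)=(2\pi|\xi|)^{-\alpha}\bigl(1-\hat{\Phi}(\xi)\bigr).$$
Using $\hat{\Phi}_{2^k}(\xi)=\hat{\Phi}(2^k\xi)$ together with \eqref{1.04}, a short Fourier-side computation yields
$$f*\psi_{2^k}(x)=2^{-k\alpha}\bigl(I_\alpha(f)(x)-\Phi_{2^k}*I_\alpha(f)(x)\bigr),$$
so that $\Delta_\psi(f)=D_\alpha(f)$ pointwise. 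It therefore suffices to verify that $\psi$ satisfies the three conditions of Theorem~$3.1$ and the non-degeneracy hypothesis of Corollary~$3.7$.

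The moment vanishing \eqref{moment} of $\Phi$ up to order $[\alpha]$ gives, via Taylor expansion of $\hat{\Phi}$ at the origin, the bound $|1-\hat{\Phi}(\xi)|\leq C|\xi|^{[\alpha]+1}$ near $0$; hence $\hat{\psi}$ is continuous on $\Bbb R^n$ with $\hat{\psi}(0)=0$. Condition~$(2)$ of Theorem~$3.1$ is then immediate with $\delta=\alpha$, since $\hat{\Phi}\in L^\infty$. The key pointwise estimate on $\psi$ comes from writing
$$\psi(x)=c_\alpha\bigl(|x|^{\alpha-n}-(|y|^{\alpha-n}*\Phi)(x)\bigr)$$
and Taylor-expanding $y\mapsto|x-y|^{\alpha-n}$ about $y=0$: for $|x|>2R$ (where $\supp\Phi\subset B(0,R)$), the conditions \eqref{moment} annihilate all terms of order $1,\dots,[\alpha]$ while the $\gamma=0$ term cancels the leading $|x|^{\alpha-n}$, leaving the remainder estimate
$$|\psi(x)|\leq C|x|^{\alpha-n-[\alpha]-1}=C|x|^{-n-\epsilon},\qquad\epsilon:=[\alpha]+1-\alpha>0.$$
Near the origin, the convolution $(|y|^{\alpha-n}*\Phi)(x)$ is bounded ($|y|^{\alpha-n}$ is locally integrable because $\alpha<n$, and $\Phi$ is bounded with compact support), while the unconvolved term is dominated by $c_\alpha|x|^{\alpha-n}$, which is locally integrable since $\alpha>0$. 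Combining these two regimes yields $B_\epsilon(\psi)<\infty$ and $H_\psi\in L^1(\Bbb R^n)$, verifying conditions~$(1)$ and~$(3)$; in particular $\psi\in L^1(\Bbb R^n)$ with $\int\psi=0$.

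For the non-degeneracy condition, Riemann--Lebesgue gives $\hat{\Phi}(\eta)\to 0$ as $|\eta|\to\infty$, so $|\hat{\psi}(\eta)|\sim (2\pi|\eta|)^{-\alpha}$ for $|\eta|$ large; hence $\hat{\psi}(2^k\xi)\neq 0$ for every $\xi\neq 0$ once $k$ is sufficiently large, and in particular $\sup_{k\in\Bbb Z}|\hat{\psi}(2^k\xi)|>0$. Applying Corollary~$3.7$ now gives
$$\|f\|_{p,w}\simeq \|\Delta_\psi(f)\|_{p,w}=\|D_\alpha(f)\|_{p,w}$$
for all $1<p<\infty$ and $w\in A_p$, as required. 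The technical heart of the argument is the Taylor-cancellation estimate for $\psi$ at infinity: the class $\mathscr M^\alpha$ is tailored precisely so that this cancellation produces the extra decay $|\psi(x)|\leq C|x|^{-n-\epsilon}$ in place of the naive $|x|^{\alpha-n}$ growth of the Riesz kernel, which is what ultimately allows the discrete Littlewood--Paley machinery of Section~$3$ to be invoked.
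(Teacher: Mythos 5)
Your proposal is correct and follows essentially the same route as the paper: both define $\psi$ so that $\hat{\psi}(\xi)=(2\pi|\xi|)^{-\alpha}(1-\hat{\Phi}(\xi))$ (equivalently $\psi=L_\alpha-\Phi*L_\alpha$ with $L_\alpha$ the Riesz kernel), derive the bounds $|\psi(x)|\leq C|x|^{\alpha-n}$ near the origin and $|\psi(x)|\leq C|x|^{\alpha-n-[\alpha]-1}$ at infinity via the moment conditions \eqref{moment} and Taylor expansion, verify the hypotheses of Theorem $3.1$ and the non-degeneracy condition, and invoke Corollary $3.7$. Your explicit justifications of $H_\psi\in L^1$ and of non-degeneracy via Riemann--Lebesgue fill in details the paper leaves as ``easy to see.''
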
  
\begin{corollary} Let $E_\alpha$ be as in \eqref{dev2}.  Suppose that 
 $1<p<\infty$, $w\in A_p$ and $0<\alpha<n$.  
  Then 
$f\in W^{\alpha, p}_w(\Bbb R^n)$ if and only if $f\in L^p_w$ and 
$E_\alpha(f) \in L^p_w;$ also,  
$$\|f\|_{p,\alpha,w}\simeq \|f\|_{p,w}+\|E_\alpha(f)\|_{p,w}.  $$
\end{corollary}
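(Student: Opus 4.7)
The strategy parallels the proof of Corollary 5.2, with Theorem 5.1 replaced by Theorem 5.3, reducing the statement to the standard characterization of the weighted Bessel potential space via the Riesz potential. First I would observe the pointwise identity
$$E_\alpha(f)(x) = D_\alpha(I_{-\alpha}f)(x), \qquad x \in \Bbb R^n,$$
which follows at once from comparing \eqref{dev} and \eqref{dev2}, using $f = I_\alpha(I_{-\alpha}f)$ for $f$ sufficiently smooth; here $I_{-\alpha} = (-\Delta/(4\pi^2))^{\alpha/2}$ denotes the operator with Fourier multiplier $(2\pi|\xi|)^\alpha$. Applying Theorem 5.3 to $h = I_{-\alpha}f$ then yields
$$\|E_\alpha(f)\|_{p,w} \simeq \|I_{-\alpha}f\|_{p,w}.$$

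Next, I would invoke the well-known norm equivalence
$$\|f\|_{p,\alpha,w} \simeq \|f\|_{p,w} + \|I_{-\alpha}f\|_{p,w}$$
on $\mathscr{S}(\Bbb R^n)$, valid for $1 < p < \infty$ and $w \in A_p$. This comes from the factorization
$(1 + 4\pi^2|\xi|^2)^{\alpha/2} = M(\xi)\bigl[1 + (2\pi|\xi|)^\alpha\bigr]$,
where one checks that $M(\xi)$ and $M(\xi)^{-1}$ both satisfy the Mihlin-type derivative estimates $|\partial_\xi^\gamma M^{\pm 1}(\xi)| \leq C_\gamma |\xi|^{-|\gamma|}$ for $\xi \neq 0$, and hence are Fourier multipliers on every $L^p_w$ with $w \in A_p$ by \cite{KW}. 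The factorization translates into the identity between the operator $T$ with symbol $(1+4\pi^2|\xi|^2)^{\alpha/2}$ (the formal inverse of $J_\alpha$) and $T_M \circ (I + I_{-\alpha})$, and similarly for $T^{-1}$, giving the two directions of the norm equivalence. Combined with the previous step I obtain $\|f\|_{p,w} + \|E_\alpha(f)\|_{p,w} \simeq \|f\|_{p,\alpha,w}$ for $f \in \mathscr{S}$, and a routine density argument (using that $\mathscr{S} \cap W^{\alpha,p}_w$ is dense in $W^{\alpha,p}_w$ under our hypotheses) extends both implications of the corollary to the full spaces.

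The main technical obstacle lies in verifying the Mihlin-type bounds for $M^{\pm 1}$ near the origin when $\alpha \notin \Bbb Z$, since $|\xi|^\alpha$ is only H\"older at $0$. The homogeneous derivative estimate $|\partial^\gamma |\xi|^\alpha| \leq C|\xi|^{\alpha - |\gamma|}$ for $\xi \neq 0$, combined with the hypothesis $\alpha > 0$, yields the required bounds on $|\xi| \leq 1$; on $|\xi| \geq 1$ the estimate is immediate since $M \to 1$ smoothly with derivatives that decay like $|\xi|^{-|\gamma|-2}$. Once this multiplier analysis is in place, the remaining ingredients (the pointwise identity $E_\alpha(f) = D_\alpha(I_{-\alpha}f)$ and the density passage from $\mathscr{S}$ to the full spaces) are routine.
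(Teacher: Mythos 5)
Your reduction to Theorem 5.3 plus the multiplier factorization is essentially the route the paper takes: its Lemma 5.5 is exactly your factorization $(1+4\pi^2|\xi|^2)^{\alpha/2}=m(\xi)+m(\xi)(2\pi|\xi|)^{\alpha}$ together with the reverse one, justified by the same Mihlin-type estimates and the Kurtz--Wheeden weighted multiplier theorem. One technical point, though: Theorem 5.3 is stated for $f\in\mathscr S(\Bbb R^n)$, and for $f\in\mathscr S$ the function $I_{-\alpha}f$ is in general \emph{not} Schwartz, since $(2\pi|\xi|)^{\alpha}$ fails to be smooth at the origin. The paper avoids this by running the a priori estimate for $g\in\mathscr S_0$ (Fourier transform vanishing near $0$), for which $I_{-\alpha}J_\alpha(g)\in\mathscr S$; you would need the same restriction.

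The genuine gap is the concluding ``routine density argument.'' For the implication that $f\in L^p_w$ and $E_\alpha(f)\in L^p_w$ force $f\in W^{\alpha,p}_w$, density of $\mathscr S$ in $W^{\alpha,p}_w$ is of no use: you do not yet know that $f$ lies in $W^{\alpha,p}_w$, so there is nothing to approximate $f$ by in that space, and approximating $f$ merely in $L^p_w$ neither transfers the hypothesis $E_\alpha(f)\in L^p_w$ to the approximants nor produces a representation $f=J_\alpha(g)$ in the limit. The paper's proof of this direction is a weak-compactness argument, not a density argument: it sets $f^{(\epsilon)}=\varphi_\epsilon*f=J_\alpha(g^{(\epsilon)})$ with $g^{(\epsilon)}=J_{-\alpha}(\varphi_\epsilon)*f\in L^p_w$, combines the already-established two-sided estimate with the pointwise bound $E_\alpha(f^{(\epsilon)})\le CM(E_\alpha(f))$ to obtain $\sup_{\epsilon}\|g^{(\epsilon)}\|_{p,w}<\infty$, extracts a weakly convergent subsequence $g^{(\epsilon_k)}\rightharpoonup g$, and identifies $f=J_\alpha(g)$ by duality against Schwartz functions (Lemma 5.6). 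Moreover, even extending the norm equivalence from $\mathscr S_0$ to all $g\in L^p_w$ is not automatic, because $E_\alpha$ is a sublinear operator not known in advance to be continuous on $L^p_w$; the paper passes through the truncations $E_\alpha^{(N)}$, which are dominated by the maximal function, before letting $N\to\infty$. As written, your proposal establishes the norm equivalence only on a dense class of smooth functions and does not deliver the ``if and only if'' assertion of the corollary; these limiting and weak-compactness steps are the actual content and need to be supplied.
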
  
\par 
A version of Theorem $5.1$ for $0<\alpha<2$ and $n\geq 2$ is shown in 
\cite{Sa4}, where $\Phi$ is assumed 
to be radial. Combining  the arguments of \cite{Sa4} with Corollary 2.11, 
we can relax the assumption that $\Phi$ is radial. 
\par 
Here we give  proofs of Theorem 5.3 and Corollary 5.4; Theorem 5.1 and 
Corollary 5.2 can be shown similarly. 
\begin{proof}[Proof of Theorem $5.3$] 
Recall that $\widehat{L}_\alpha(\xi)=(2\pi|\xi|)^{-\alpha}$, $0<\alpha<n$, 
if $L_\alpha(x)=\tau(\alpha)|x|^{\alpha-n}$ with 
$$\tau(\alpha)=\frac{\Gamma\left(n/2-\alpha/2\right)}
{\pi^{n/2}2^\alpha \Gamma\left(\alpha/2\right)}. $$ 
Let 
$$\psi(x)=L_\alpha(x)-\Phi*L_\alpha(x).    $$  
Then, we have $D_\alpha (f) =\Delta_\psi(f)$, $f\in  
\mathscr S(\Bbb R^n)$, by homogeneity of $L_\alpha$, where $D_\alpha$ is 
as in \eqref{dev}.   
We observe that $\psi$ can be written as  
\begin{equation}\label{dp1}
\psi(x)=\int \left(L_\alpha(x)-L_\alpha(x-y)\right)\Phi(y)\, dy.  
\end{equation}  
Because $\Phi$ is  bounded and  
compactly supported and $L_\alpha$ is locally integrable, we see that 
$$\sup_{|x|\leq 1} \left|\int L_\alpha(x-y)\Phi(y)\, dy\right|\leq C  
$$ 
for some constant $C$.  Using this inequality in the definition of $\psi$, 
we have 
\begin{equation}\label{dp2}
|\psi(x)|\leq C|x|^{\alpha-n} \quad \text{for $|x|\leq 1$.} 
\end{equation}  
By applying Taylor's formula and \eqref{moment}, 
we can easily deduce from \eqref{dp1} 
that 
\begin{equation}\label{dp3}
|\psi(x)|\leq C|x|^{\alpha-n-[\alpha]-1} \quad \text{for $|x|\geq 1$.} 
\end{equation}  
\par 
Taking the Fourier transform, we see that 
\begin{equation}\label{dp4}
\hat{\psi}(\xi)=(2\pi|\xi|)^{-\alpha}\left(1-\hat{\Phi}(\xi)\right).      
\end{equation}  
By \eqref{moment} this implies 
$|\hat{\psi}(\xi)|\leq C|\xi|^{[\alpha]+1-\alpha}$, from which 
the condition \eqref{cancell} follows, since $[\alpha]+1-\alpha>0$. 
It is easy to see that 
the conditions (1), (2) and (3) of Theorem $3.1$   
follow from the estimates \eqref{dp2}, \eqref{dp3} and \eqref{dp4}. 
Also, obviously we have 
$\sup_{k\in \Bbb Z}|\hat{\psi}(2^k\xi)|>0$ for all $\xi\neq 0$. 
Thus we can apply  Corollary $3.7$ to get the equivalence of the $L^p_w$ norms 
claimed. 
\end{proof}
\begin{proof}[Proof of Corollary $5.4$]  
Riesz potentials and Bessel potentials are related as follows.   
\begin{lemma} 
Let $\alpha>0$, $1<p<\infty$ and $w\in A_p$. 
\begin{enumerate} \renewcommand{\labelenumi}{(\arabic{enumi})}   
\item We have 
$$(2\pi|\xi|)^\alpha=\ell(\xi)(1+4\pi^2|\xi|^2)^{\alpha/2} $$ 
with a Fourier multiplier $\ell$ for $L^p_w$.  
\item  There exists a Fourier multiplier $m$ for $L^p_w$ such that
$$(1+4\pi^2|\xi|^2)^{\alpha/2}=m(\xi)+m(\xi)(2\pi|\xi|)^\alpha. $$ 
\end{enumerate} 
\end{lemma}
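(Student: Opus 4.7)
The plan is to exhibit $\ell$ and $m$ by explicit formulas, check the two algebraic identities directly, and then prove they are Fourier multipliers for $L^p_w$ by verifying the Mikhlin--H\"ormander derivative condition and invoking the Kurtz--Wheeden weighted multiplier theorem \cite{KW} (already used earlier in this paper). The identities in (1) and (2) force the definitions
$$
\ell(\xi)=\frac{(2\pi|\xi|)^\alpha}{(1+4\pi^2|\xi|^2)^{\alpha/2}}
=\left(\frac{4\pi^2|\xi|^2}{1+4\pi^2|\xi|^2}\right)^{\alpha/2},
\qquad
m(\xi)=\frac{(1+4\pi^2|\xi|^2)^{\alpha/2}}{1+(2\pi|\xi|)^\alpha},
$$
and both are bounded, radial, and smooth on $\Bbb R^n\setminus\{0\}$.

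It then suffices to show that $|\xi|^{|\gamma|}|\partial^\gamma_\xi \ell(\xi)|\leq C_\gamma$ and $|\xi|^{|\gamma|}|\partial^\gamma_\xi m(\xi)|\leq C_\gamma$ for $\xi\neq 0$ and all multi-indices $\gamma$ up to a suitable order depending on $n$. Writing $\ell(\xi)=h_\ell(|\xi|)$ and using the standard radial-derivative formula (expressing $\partial^\gamma$ as a sum of $h^{(j)}(r)r^{j-|\gamma|}$ weighted by bounded polynomials in $\xi/|\xi|$), this reduces to the one-variable estimate $r^j|h_\ell^{(j)}(r)|\leq C$ on $(0,\infty)$ for $0\leq j\leq |\gamma|$. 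Factor $h_\ell(r)=(2\pi r)^\alpha g(r)$ with $g(r)=(1+4\pi^2 r^2)^{-\alpha/2}$; then $g$ is smooth on $[0,\infty)$ with $g^{(k)}(r)=O(r^{-\alpha-k})$ as $r\to\infty$, so a Leibniz expansion
$$
h_\ell^{(j)}(r)=\sum_{k=0}^{j}\binom{j}{k}(2\pi)^\alpha\alpha(\alpha-1)\cdots(\alpha-k+1)\,r^{\alpha-k}\,g^{(j-k)}(r)
$$
yields $r^j|h_\ell^{(j)}(r)|=O(r^\alpha)\to 0$ near the origin and $O(1)$ at infinity, since the exponents telescope. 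The function $h_m(r)$ is handled by the parallel factorization $h_m(r)=(1+(2\pi r)^2)^{\alpha/2}(1+(2\pi r)^\alpha)^{-1}$ together with the expansions $(1+(2\pi r)^\alpha)^{-1}=1-(2\pi r)^\alpha+O(r^{2\alpha})$ near $r=0$ and $(1+(2\pi r)^\alpha)^{-1}=(2\pi r)^{-\alpha}+O(r^{-2\alpha})$ as $r\to\infty$, giving the same uniform bound on $r^j|h_m^{(j)}(r)|$.

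The only genuine technical obstacle is the non-smoothness of the radial factor $|\xi|^\alpha$ at the origin when $\alpha\notin 2\Bbb N$; however, the weight $r^j$ in Mikhlin's condition is precisely what is needed to absorb the $r^{\alpha-j}$ singularity of $\partial^j|\xi|^\alpha$, leaving a harmless $r^\alpha$ factor that vanishes as $r\to 0$. Everything else is controlled by elementary estimates on the smooth factors $(1+4\pi^2 r^2)^{\pm\alpha/2}$ and $(1+(2\pi r)^\alpha)^{-1}$, so the Mikhlin bound for both $\ell$ and $m$ follows and the Kurtz--Wheeden theorem delivers the desired $L^p_w$ multiplier property.
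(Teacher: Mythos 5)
Your proposal is correct and follows essentially the same route as the paper: define $\ell$ and $m$ by the forced quotient formulas, verify the Mikhlin--H\"ormander bounds $|\partial_\xi^\gamma \ell(\xi)|\leq C_\gamma|\xi|^{-|\gamma|}$ (and likewise for $m$), and invoke the Kurtz--Wheeden weighted multiplier theorem. The paper merely asserts these derivative estimates and cites \cite{CF, KW} and \cite[Lemma 4]{St2}; you have supplied the elementary one-variable verification that it leaves to the reader.
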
  
To prove this we note that 
$$|\partial_\xi^\gamma \ell(\xi)|\leq C_\alpha |\xi|^{-|\gamma|}, \quad 
\xi\in \Bbb R^n\setminus\{0\},  $$ 
for all multi-indices $\gamma$  and similar estimates for $m(\xi)$. So, 
by a theorem on Fourier multipliers  for $L^p_w$ 
we can get the results as claimed (see \cite{CF, KW}).  
See also \cite[Lemma 4]{St2}.  
\par 
 When $g\in L^p_w$, $w\in A_p$,  $1<p<\infty$ and $0<\alpha<n$, we show that 
\begin{equation}\label{4.1}  
\|E_\alpha(J_\alpha(g))\|_{p,w}+\|J_\alpha(g)\|_{p,w}\simeq \|g\|_{p,w}.  
\end{equation}
We first prove \eqref{4.1} for $g\in \mathscr{S}_0(\Bbb R^n)$.  
Since $E_\alpha(J_\alpha(g))=D_\alpha(I_{-\alpha}J_\alpha(g))$ and 
$I_{-\alpha}J_\alpha(g)\in \mathscr{S}(\Bbb R^n)$, 
when $g\in \mathscr{S}_0(\Bbb R^n)$,  by Theorem 5.3 we have 
\begin{equation}\label{4.2} 
\|E_\alpha(J_\alpha(g))\|_{p,w}\simeq \|I_{-\alpha}J_\alpha(g)\|_{p,w},  
\end{equation}    
where $I_{-\alpha}$ is defined by \eqref{1.04} with $-\alpha$ in place of 
$\alpha$.    
Part (1) of Lemma 5.5 implies that 
\begin{equation*}
\|I_{-\alpha}J_\alpha(g)\|_{p,w}\leq C\|g\|_{p,w}  
\end{equation*} 
and hence 
\begin{equation}\label{4.3} 
\|E_\alpha(J_\alpha(g))\|_{p,w}\leq C\|g\|_{p,w}.    
\end{equation} 
On the other hand, by part (2) of Lemma 5.5 and \eqref{4.2} we have 
\begin{align}\label{4.4} 
\|g\|_{p,w}&=\|J_{-\alpha}J_\alpha(g)\|_{p,w}\leq C\|J_\alpha(g)\|_{p,w}
+C\|I_{-\alpha}J_\alpha(g)\|_{p,w} 
\\ 
&\leq C\|J_\alpha(g)\|_{p,w}+C\|E_\alpha(J_\alpha(g))\|_{p,w},     \notag 
\end{align} 
where we recall that the Bessel potential operator $J_\beta$ is defined 
on $\mathscr{S}(\Bbb R^n)$ for any $\beta\in \Bbb R$ by 
$\widehat{J_\beta(f)}(\xi)=(1+4\pi^2|\xi|^2)^{-\beta/2} \hat{f}(\xi)$.  
Also we have 
\begin{equation}\label{4.5} 
\|J_\alpha(g)\|_{p,w}\leq C\|M(g)\|_{p,w}\leq C\|g\|_{p,w}.    
\end{equation} 
Combining \eqref{4.3}, \eqref{4.4} and \eqref{4.5}, we have \eqref{4.1} 
for $g\in \mathscr{S}_0(\Bbb R^n)$. 
\par 
Now we show that \eqref{4.1} holds for any $g\in L^p_w$.     
 For a positive integer $N$, let  
$$E_\alpha^{(N)}(f)(x)=\left(\sum_{k=-N}^N\left|f(x)-
\Phi_{2^k}*f(x)\right|^2 2^{-2k\alpha} \right)^{1/2}. 
$$  
Then $E_\alpha^{(N)}(f)\leq C_N M(f)$, which  implies that $E_\alpha^{(N)}$ 
is bounded on $L^p_w$.  
We can take a sequence $\{g_k\}$ in $\mathscr{S}_0(\Bbb R^n)$ such that 
$g_k \to g$ in $L^p_w$ and $J_\alpha(g_k) \to J_\alpha(g)$ in $L^p_w$ as $k\to 
\infty$.  
By \eqref{4.1} for $\mathscr{S}_0(\Bbb R^n)$ we see that 
\begin{equation*} 
\|E_\alpha^{(N)}(J_\alpha(g_k))\|_{p,w}\leq C \|g_k\|_{p,w}.  
\end{equation*}
Letting $k\to \infty$, by $L^p_w$ boundedness and sublinearity of 
$E_\alpha^{(N)}$   we have 
\begin{equation*} 
\|E_\alpha^{(N)}(J_\alpha(g))\|_{p,w}\leq C \|g\|_{p,w}.  
\end{equation*} 
Thus, letting $N\to \infty$, we get 
\begin{equation*} 
\|E_\alpha(J_\alpha(g))\|_{p,w}\leq C \|g\|_{p,w}.  
\end{equation*} 
Therefore, we have 
\begin{align*}\label{4.7} 
&\lim_{k\to \infty} \left\| E_\alpha(J_\alpha(g))-
E_\alpha(J_\alpha(g_k))\right\|_{p,w}  
\leq  \lim_{k\to \infty}
\left\| E_\alpha(J_\alpha(g-g_k))\right\|_{p,w}  
\\            
&\leq C\lim_{k\to \infty} \left\|g-g_k\right\|_{p,w} 
=0.    
\end{align*}  
Consequently, letting $k\to \infty$ in the relation 
\begin{equation*}
\|E_\alpha(J_\alpha(g_k))\|_{p,w}+\|J_\alpha(g_k)\|_{p,w}
\simeq \|g_k\|_{p,w},   
\end{equation*}
which we have already proved,  
 we can obtain \eqref{4.1} for any $g\in L^p_w$. 
\par 
To complete the proof of Corollary 5.4,  it thus only remains to show that 
$f\in W^{\alpha, p}_w(\Bbb R^n)$ if $f\in L^p_w$ and $E_\alpha(f) \in L^p_w$.  
To prove this it is convenient to note the following.  
\begin{lemma}  
Suppose that $f\in L^p_w$, $w\in A_p$, $1<p<\infty$, $g\in \mathscr S(\Bbb R^n)$ and $\alpha>0$.  Then we have the following. 
\begin{enumerate} 
\item[$(1)$] $K_\alpha*(f*g)(x)=(K_\alpha*f)*g(x)=(K_\alpha*g)*f(x)$ for 
every $x\in \Bbb R^n;$  
\item[$(2)$] $\int_{\Bbb R^n}(K_\alpha*f)(y)g(y)\, dy= 
\int_{\Bbb R^n}(K_\alpha*g)(y)f(y)\, dy.$    
\end{enumerate} 
\end{lemma}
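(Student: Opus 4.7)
The plan is to reduce both assertions to Fubini's theorem applied to the relevant triple integrals, with the main technical point being the verification of absolute integrability. I will use three facts about the Bessel kernel: $K_\alpha\in L^1(\Bbb R^n)$, with exponential decay at infinity and (for $\alpha<n$) an $|x|^{\alpha-n}$ singularity at the origin; $K_\alpha$ is radial, so $K_\alpha(y-z)=K_\alpha(z-y)$; and $K_\alpha*g\in\mathscr{S}(\Bbb R^n)$ whenever $g\in\mathscr{S}(\Bbb R^n)$, because $\widehat{K_\alpha}(\xi)=(1+4\pi^2|\xi|^2)^{-\alpha/2}$ is $C^\infty$ with polynomially bounded derivatives. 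I will also need the standard consequence of the $A_p$ condition that $\int|f(y)|(1+|y|)^{-M}\,dy<\infty$ for $M$ large, obtained from H\"older's inequality using the weight $w$ together with a polynomial growth bound $\int_{|y|<R}w^{-1/(p-1)}(y)\,dy\le CR^{N}$ that follows from the $A_p$ theory.

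For part (1), I would first expand
$$
(K_\alpha*f)*g(x)=\iint K_\alpha(x-z-y)f(y)g(z)\,dy\,dz
$$
and verify absolute integrability of the integrand. The key pointwise estimate is $(|K_\alpha|*|f|)(x-z)\le CM(|f|)(x-z)$ (since $K_\alpha$ is dominated by a radial decreasing $L^1$ function), after which H\"older's inequality with weight $w$, the $A_p$ boundedness of $M$, and the rapid decay of $g$ close the argument for any fixed $x$. Fubini then yields $(K_\alpha*f)*g(x)=\int f(y)(K_\alpha*g)(x-y)\,dy=(K_\alpha*g)*f(x)$. Exactly the same device applied to
$$
K_\alpha*(f*g)(x)=\iint K_\alpha(x-z)f(y)g(z-y)\,dy\,dz,
$$
this time integrating $z$ out first and using that the resulting function is Schwartz, identifies this integral with the same expression $(K_\alpha*g)*f(x)$, giving the triple equality.

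Part (2) is the corresponding duality. I would write
$$
\int(K_\alpha*f)(y)g(y)\,dy=\iint K_\alpha(y-z)f(z)g(y)\,dz\,dy,
$$
and verify absolute integrability by integrating first in $y$: the Schwartz decay of $g$ combined with the rapid decay of $K_\alpha$ gives a uniform bound of the form $C_M(1+|z|)^{-M}$, and then integration in $z$ against $|f|$ is handled by the moderate-growth estimate for $f$. Applying Fubini and exploiting the radiality identity $K_\alpha(y-z)=K_\alpha(z-y)$, the inner $y$-integral equals $(K_\alpha*g)(z)$, which produces the claimed identity.

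The main obstacle here is not conceptual but bookkeeping: in each step, verifying the absolute integrability required for Fubini. The pivotal input that makes this bookkeeping work is that $f\in L^p_w$ with $w\in A_p$ supplies just enough control on the growth of $f$ to be dominated by the rapidly decaying factors coming from $g$ and $K_\alpha$, while the pointwise bound of the Bessel potential by the Hardy-Littlewood maximal operator handles all the interior, near-singular contributions.
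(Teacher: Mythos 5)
Your proposal is correct and follows essentially the same route as the paper: Fubini's theorem justified by the pointwise bound $|K_\alpha|*|f|\leq CM(f)$, the weighted $L^p$ boundedness of $M$, and H\"older's inequality against $w^{-p'/p}\in A_{p'}$ to absorb the Schwartz decay of $g$. The only cosmetic difference is that the paper obtains part (2) by specializing part (1) to $x=0$ and using the radiality of $K_\alpha$, whereas you run a second (equally valid) Fubini argument directly.
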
 
\begin{proof} 
To prove part $(1)$,  by Fubini's theorem it suffices to show that 
$$I=\iint K_\alpha(x-z-y)|f(y)||g(z)|\, dy\, dz <\infty.   $$  
This is obvious, for 
\begin{align*} 
I&\leq C\int M(f)(x-z)|g(z)|\, dz=C\int M(f)(z)|g(x-z)|\, dz 
\\ 
&\leq C\|M(f)\|_{p,w}\left(\int |g(x-z)|^{p'}w(z)^{-p'/p}\, dz\right)^{1/p'} 
\\ 
&\leq C \|f\|_{p,w}\left(\int |g(x-z)|^{p'}w(z)^{-p'/p}\, dz\right)^{1/p'}, 
\end{align*} 
where the last integral is finite since $g\in \mathscr S(\Bbb R^n)$ and 
$w^{-p'/p}\in A_{p'}$.   
\par 
Part $(2)$ follows from part $(1)$ by putting $x=0$ since $K_\alpha$ is radial. 
\end{proof}

Let $f\in L^p_w$ and $E_\alpha(f) \in L^p_w$.  We take $\varphi\in 
\mathscr{S}(\Bbb R^n)$ satisfying $\int \varphi(x)\, dx=1$. 
Let 
$f^{(\epsilon)}(x)=\varphi_\epsilon*f(x)$ and $g^{(\epsilon)}(x)=
J_{-\alpha}(\varphi_\epsilon)*f(x)$.  Then, note that $g^{(\epsilon)}
\in L^p_w$ and $f^{(\epsilon)}=J_\alpha(g^{(\epsilon)})$ by part (1) of 
Lemma 5.6.  
\par 
By \eqref{4.1} we have 
\begin{equation}\label{4.8} 
\|E_\alpha(f^{(\epsilon)})\|_{p,w}+\|f^{(\epsilon)}\|_{p,w}\simeq 
\|g^{(\epsilon)}\|_{p,w}.  
\end{equation}  
We note that  
\begin{equation}\label{4.9} 
\sup_{\epsilon>o}
\|f^{(\epsilon)}\|_{p,w}\leq C\|M(f)\|_{p,w}\leq C\|f\|_{p,w}. 
\end{equation}
Also,  Minkowski's inequality implies that 
\begin{align*}
E_\alpha(f^{(\epsilon)})(x)&=
\left(\sum_{k=-\infty}^\infty\left|\varphi_\epsilon*f(x)-
\Phi_{2^k}*\varphi_\epsilon*f(x)\right|^2 2^{-2k\alpha} \right)^{1/2} 
\\ 
&\leq \int_{\Bbb R^n}|\varphi_\epsilon(y)|
\left(\sum_{k=-\infty}^\infty\left|f(x-y)-
\Phi_{2^k}*f(x-y)\right|^2 2^{-2k\alpha} \right)^{1/2}\, dy 
\\ 
&\leq CM(E_\alpha(f))(x).  
\end{align*}   
Thus 
$$\sup_{\epsilon>0}\|E_\alpha(f^{(\epsilon)})\|_{p,w} 
\leq C\|M(E_\alpha(f))\|_{p,w}
\leq C\|E_\alpha(f)\|_{p,w}, $$ 
which combined with \eqref{4.8} and \eqref{4.9} implies that 
$\sup_{\epsilon>0}\|g^{(\epsilon)}\|_{p,w}<\infty$.   
\par 
Therefore we can choose 
a sequence $\{g^{(\epsilon_k)}\}$, $\epsilon_k \to 0$, 
 which converges weakly in $L^p_w$.  
Let $g^{(\epsilon_k)}\to g$ weakly in $L^p_w$. 
Then, since $\{f^{(\epsilon_k)}\}$ converges to $f$ in $L^p_w$, 
we can conclude that $f=J_\alpha(g)$. To show this, let 
$\Lambda_h(f)=\int f(x)h(x)\,dx$ for $h\in \mathscr{S}(\Bbb R^n)$. 
Then it is easy to see that $\Lambda_h$ is a bounded linear functional 
on $L^p_w$ for every $h\in \mathscr{S}(\Bbb R^n)$.  Thus, for any 
$h\in \mathscr{S}(\Bbb R^n)$, applying part (2) of Lemma 5.6  
 and noting $J_\alpha(h)\in \mathscr{S}(\Bbb R^n)$,  we have 
\begin{align*} 
\int f(x)h(x)\, dx&=\lim_k\int f^{(\epsilon_k)}(x)h(x) \, dx
=\lim_k \int J_\alpha(g^{(\epsilon_k)})(x)h(x)\, dx 
\\ 
&=\lim_k \int g^{(\epsilon_k)}(x)J_\alpha(h)(x)\, dx 
  =\int g(x)J_\alpha(h)(x)\, dx 
\\ 
&=\int J_\alpha(g)(x)h(x)\, dx. 
\end{align*}   
This implies that $f=J_\alpha(g)$ and hence $f\in W^{\alpha, p}_w(\Bbb R^n)$.  
This completes the proof of Corollary 5.4. 
\end{proof}

\begin{remark}\label{one} 
Let $\psi=\sgn -\sgn*\Phi$ on $\Bbb R$, where $\Phi\in \mathscr M^1$.   
We note that 
$\hat{\psi}(\xi)=-i\pi^{-1}\xi^{-1}(1-\hat{\Phi}(\xi))$. 
We have results analogous to Theorems $5.1$ and $5.3$ for $g_\psi$ and 
$\Delta_\psi$, respectively, with similar proofs. They can be applied to prove 
results generalizing Corollaries $5.2$ and $5.4$ to the case $n=1$ and 
$\alpha=1$ by arguments similar to those used for the corollaries. 
\end{remark}

\section{Proof of \eqref{mar}} 
In this section we give a proof of the estimate \eqref{mar} for completeness. 
Put $\psi= \varphi^{(\alpha)}$.  
To prove \eqref{mar}, assuming $|y|<|x|/2$,  we write 
\begin{equation*}
L=\int_0^\infty|t^{-1}\psi((x-y)/t)-t^{-1}\psi(x/t)|^2\, \frac{dt}{t}.  
\end{equation*}  
We first assume $x>0$ and $y>0$.  By the change of variables $x/t=u$ we have 
\begin{equation*}  
L=x^{-2}\int_0^\infty|\psi(u-uy/x)-\psi(u)|^2 u\, du =I+II, 
\end{equation*}  
where 
\begin{gather*} 
 I=x^{-2}\int_0^1 |\psi(u-uy/x)-\psi(u)|^2 u\, du, 
 \\ 
II= x^{-2}\int_1^\infty|\psi(u-uy/x)-\psi(u)|^2 u\, du. 
\end{gather*} 
\par 
We estimate $I$ and $II$ separately. We see that 
\begin{equation*}  
II=x^{-2}\int_1^\infty|\psi(u-uy/x)|^2 u\, du 
=\alpha^2 x^{-2}\int_1^{x/(x-y)} (1-|u(1-y/x)|)^{2(\alpha-1)}u\, du.  
\end{equation*}  
Thus, by the change of variables $w=u(x-y)/x$, we have 
\begin{equation*}  
II=\alpha^2(x-y)^{-2}\int_{(x-y)/x}^1 (1-w)^{2(\alpha-1)}w\, dw
\leq \alpha^2(x-y)^{-2}\int_{(x-y)/x}^1 (1-w)^{2(\alpha-1)}\, dw,  
\end{equation*}  
which implies that 
\begin{equation}\label{e6}   
II\leq  \alpha^2(x-y)^{-2}(2\alpha-1)^{-1}(y/x)^{2\alpha-1}\leq  
C_\alpha y^{2\alpha-1}x^{-1-2\alpha}.   
\end{equation}  
To deal with $I$, we write 
\begin{equation*}  
I=\alpha^2 x^{-2}\int_{0}^1|(1-u(1-y/x))^{\alpha-1}-(1-u)^{\alpha-1}|^2u\, du
=I_1+I_2,  
\end{equation*}  
where 
\begin{gather*} 
 I_1=\alpha^2x^{-2}\int_0^{1-2y/x} |(1-u(1-y/x))^{\alpha-1}-(1-u)^{\alpha-1}|^2u\, du, 
 \\ 
I_2= \alpha^2x^{-2}\int_{1-2y/x}^1 |(1-u(1-y/x))^{\alpha-1}-(1-u)^{\alpha-1}|^2u\, du. 
\end{gather*}
We observe that 
\begin{align}\label{e7} 
&\int_{1-2y/x}^1 (1-u(1-y/x))^{2(\alpha-1)}\, du 
=x(x-y)^{-1}\int_{(1-2y/x)(x-y)/x}^{(x-y)/x}(1-w)^{2(\alpha-1)}\, dw 
\\ 
&\leq C_\alpha\left(\left(1- (1-2y/x)(x-y)/x\right)^{2\alpha-1} 
-\left(1- (x-y)/x\right)^{2\alpha-1}\right)        \notag 
\\ 
&\leq C_\alpha (y/x)^{2\alpha-1}.                  \notag
\end{align}
Also, we have 
\begin{equation}\label{e8} 
\int_{1-2y/x}^1 (1-u)^{2(\alpha-1)}\, du 
\leq C_\alpha (y/x)^{2\alpha-1}.
\end{equation}
By \eqref{e7} and \eqref{e8} we see that 
\begin{equation}\label{e91} 
I_2\leq C_\alpha x^{-2}(y/x)^{2\alpha-1}=C_\alpha y^{2\alpha-1}x^{-1-2\alpha}. 
\end{equation}
To estimate $I_1$ we recall that $1/2<\alpha < 3/2$.  By the mean value 
theorem, we have 
\begin{align}\label{e9} 
I_1&\leq C x^{-2} (y/x)^2 \int_0^{1-2y/x} (1-u)^{2(\alpha-2)}\, du  
\\ 
&\leq Cx^{-2} (y/x)^2 (2y/x)^{2\alpha-3}=Cy^{2\alpha-1}x^{-2\alpha-1}.  
\notag 
\end{align}
The estimate $I\leq C_\alpha y^{2\alpha-1}x^{-1-2\alpha}$ follows from 
\eqref{e91} and \eqref{e9}, which combined with \eqref{e6} implies 
\begin{equation*}\label{e10} 
L\leq C_\alpha y^{2\alpha-1}x^{-1-2\alpha},  
\end{equation*}
when $x>0$, $y>0$.  
\par 
Next we deal with the case $x>0$, $y<0$. In this case we also consider 
the analogous decomposition $L=I+II$. 
 Since $\psi$ is supported in $[-1, 1]$ 
and $x>0, y<0$, we see that $II=0$. 
Also, $I=I_1+I_2$, where   
\begin{align*} 
 I_1&=\alpha^2x^{-2}\int_0^{1-2|y|/x} |(1-u(1-y/x))^{\alpha-1}-(1-u)^{\alpha-1}|^2u
 \, du, 
 \\ 
I_2&= x^{-2}\int_{1-2|y|/x}^1 |\psi(u(1-y/x))-\psi(u)|^2 u\, du. 
\end{align*}
To estimate $I_2$, we see that 
\begin{align*}\label{e11}  
\int_{1-2|y|/x}^1 |\psi(u(1-y/x))|^2 u\, du 
&\leq \alpha^2\int_{1-2|y|/x}^{x/(x-y)} |1-u(1-y/x)|^{2(\alpha-1)} \, du  
\\ 
&=\alpha^2x(x-y)^{-1}\int_{(x-y)(x+2y)/x^2}^1  (1-w)^{2(\alpha-1)}\, dw   
\notag 
\\ 
&=\alpha^2x(x-y)^{-1}(2\alpha-1)^{-1}(|y|/x+ 2(y/x)^2)^{2\alpha-1}       
\notag 
\\ 
&\leq C_\alpha |y/x|^{2\alpha-1}.                            \notag 
\end{align*}  
Similarly, 
\begin{equation*}
\int_{1-2|y|/x}^1 (1-u)^{2(\alpha-1)} u\, du \leq C_\alpha  |y/x|^{2\alpha-1}. 
\end{equation*}
Thus 
\begin{equation}\label{e12}  
I_2\leq C  |y|^{2\alpha-1}x^{-2\alpha-1}. 
\end{equation}  
On the other hand, by the mean value theorem, 
\begin{align}\label{e13}  
I_1
&\leq \alpha^2 x^{-2}\int_0^{1-2|y|/x} 
\left(|y|x^{-1}|\alpha-1||(1-u(1-y/x))|^{\alpha-2}  \right)^2\, du   
\\ 
&\leq C y^2x^{-4} x(x-y)^{-1}\int_0^{(x+2y)(x-y)/x^2}(1-u)^{2(\alpha-2)}\, du 
\notag 
\\ 
&=Cy^2x^{-3}(x-y)^{-1}(3-2\alpha)^{-1}\left((|y|/x+ 2(y/x)^2)^{2\alpha-3}-1
\right)    \notag 
\\ 
&\leq C_\alpha |y|^{2\alpha-1}x^{-2\alpha-1}.                 \notag
\end{align}  
The estimates \eqref{e12} and \eqref{e13} imply that $I\leq 
C |y|^{2\alpha-1}x^{-2\alpha-1}$ for $x>0, y<0$.  
\par 
Since $\psi$ is odd, we observe that 
\begin{equation*}  
L=\int_0^\infty|t^{-1}\psi((-x+y)/t)-t^{-1}\psi(-x/t)|^2\, \frac{dt}{t}.  
\end{equation*} 
Thus, the results for the cases $x<0$, $y>0$ and $x<0$, $y<0$ will follow 
from the results for the cases $x>0$, $y<0$ and $x>0$, $y>0$, respectively.

\end{document}